\tikzstyle{every picture}=[scale=.5,inner sep=10]
\tikzstyle{every node}=[draw,circle,inner sep=1]
\DeclareMathOperator{\pic}{Pic}
\DeclareMathOperator{\ShHom}{\mathscr{H}\text{\kern -3pt {\calligra\large om}}\,}
\DeclareMathOperator{\Aut}{Aut}
\DeclareMathOperator{\spec}{Spec}
\DeclareMathOperator{\codim}{codim}
\DeclareMathOperator{\ch}{ch}
\DeclareMathOperator{\td}{td}
\DeclareMathOperator{\pt}{pt}
\DeclareMathOperator{\calP}{\mathcal{P}}
\newcommand{\Mm}[2]{\mathcal{M}_{#1,#2}}
\newcommand{\Mb}{\overline{\mathcal{M}}}
\newcommand{\Cb}{\overline{\mathcal{C}}}
\newcommand{\C}{{\mathcal{C}}}
\newcommand{\Jb}{\overline{\mathcal{J}}}
\newcommand{\J}{{\mathcal{J}}}
\newcommand{\Tau}{F_{\text{tau}}}
\newcommand{\ThDiv}{\overline{\Theta}}
\newcommand{\ThDivCl}{\overline{\theta}}
\newcommand{\Pic}{\operatorname{Pic}}
\newcommand{\Chow}{\operatorname{A}}
\newtheorem{theorem}{Theorem}[section]
\newtheorem*{theoremNL}{Theorem}
\newtheorem{proposition}[theorem]{Proposition}
\newtheorem*{corollaryNL}{Corollary}
\newtheorem{corollary}[theorem]{Corollary}
\newtheorem{lemma}[theorem]{Lemma}
\theoremstyle{definition}
\newtheorem{definition}[theorem]{Definition}
\theoremstyle{remark}
\newtheorem{remark}[theorem]{\textbf{Remark}}
\newtheorem{example}[theorem]{\textbf{Example}}
\newtheorem{notation}[theorem]{\textbf{Notation}}
\newtheorem{construction}[theorem]{\textbf{Construction}}
\begin{document}

\title{Extensions of the  universal theta divisor}
\author{Jesse Leo Kass}

\address{Current: J.~L.~Kass, Dept.~of Mathematics, University of South Carolina, 1523 Greene Street, Columbia, SC 29208, United States of America}
\email{kassj@math.sc.edu}
\urladdr{http://people.math.sc.edu/kassj/}

\address{Former: J.~L.~Kass, Leibniz Universit\"{a}t Hannover, Institut f\"{u}r algebraische Geometrie, Welfengarten 1, 30060 Hannover, Germany}

\author{Nicola Pagani}

\address{Current: N.~Pagani, Department of Mathematical Sciences, University of Liverpool, Liverpool, L69 7ZL, United Kingdom}
\email{pagani@liv.ac.uk}
\urladdr{http://pcwww.liv.ac.uk/~pagani/}

\address{Former: N.~Pagani, Leibniz Universit\"{a}t Hannover, Institut f\"{u}r algebraische Geometrie, Welfengarten 1, 30060 Hannover, Germany}

\begin{abstract}
The Jacobian varieties of smooth curves fit together to form a family, the universal Jacobian, over the moduli space of smooth pointed curves, and the theta divisors of these curves form a divisor in the universal Jacobian.   In this paper we describe how to extend these families over the moduli space of stable pointed curves using a stability parameter.  We then   prove a wall-crossing formula describing how the theta divisor varies with this parameter.

We use this result to analyze divisors on the moduli space of smooth pointed curves that have recently been studied by Grushevsky--Zakharov, Hain and M\"{u}ller.  Finally, we compute the pullback of the theta divisor studied in Alexeev's work on stable semiabelic varieties and in Caporaso's work on theta divisors of compactified Jacobians.
\end{abstract}
\maketitle

\bibliographystyle{amsalpha}

\setcounter{tocdepth}{2}
\tableofcontents
\section {Introduction}
In this paper, we study the enumerative geometry of  compactified universal  Jacobians, moduli spaces closely related to the  moduli space $\Mb_{g, n}$ of stable pointed curves.  The degree $d$ (uncompactified) universal Jacobian $\J^{d}_{g, n}$ is the moduli space of degree $d$ line bundles on smooth pointed curves, and a compactified universal Jacobian is a compactification defined by including certain line bundles on stable pointed curves together with sheaves that arise as their degenerations.  The study of $\Mb_{g, n}$ has long held a central role in modern enumerative geometry, but the enumerative geometry of the compactified universal Jacobian has only recently been studied.

An important difference between $\Mm{g}{n}$ and $\J^{d}_{g, n}$ is that essentially the only compactification used in the study of the enumerative geometry of $\Mm{g}{n}$ is the compactification $\Mb_{g, n}$ by stable curves, but the universal Jacobian admits many natural compactifications, each of which should play an important role in the enumerative geometry.  These different compactifications arise because the moduli space of all line bundles on stable curves  is badly behaved (for example, it is not separated), and to obtain a well-behaved moduli space one needs to \emph{choose} a stability condition.  It is natural to expect that the enumerative geometry of a compactified universal Jacobian depends on the specific choice of this stability condition, and that the dependence on the choice is an important structural feature.  Here we focus on giving a complete picture for codimension $1$ classes. We consider codimension $1$ classes up to rational equivalence (i.e.~as elements of the Chow group), but the distinction between rational equivalence or other adequate equivalence relations (e.g.~numerical or cohomological equivalence) does not play a significant role in this paper.

Our main results are about the class of the theta divisor.  On the universal Jacobian $\J_{g, n} := \J^{g-1}_{g, n}$ parameterizing degree $g-1$ line bundles on smooth pointed curves, the theta divisor $\Theta \subset \J_{g, n}$ is the codimension $1$ locus parameterizing line bundles with a nonzero global section.  We describe how $\Theta$ extends to a given compactification and  how the class of the extension varies when changing the compactification.  Our description of the theta divisor provides a complete description of how codimension classes $1$ vary, in the sense that the group of these classes is generated by the class of the extended theta divisor and its translates, together with classes pulled back from $\Mb_{g, n}$, and the latter do not vary in an interesting way with the compactification of $\J_{g,n}$.

Our results about the theta divisor illuminate a problem studied by Grushevsky--Zakharov, Hain, and M\"{u}ller which we now recall.   Given a sequence $\vec{d} = (d_1, \dots, d_n)$ of integers with  $\sum d_j = g-1$ and at least one $d_j$ negative, one can consider possible extensions to $\Mb_{g,n}$ of the closed subset
\[
	D_{\vec{d}} := \{ (C, p_1, \dots, p_n) \in \Mm{g}{n} \colon h^{0}(C, \mathcal{O}( d_{1} p_1+\dots+ d_{n} p_n)) \ne 0 \} \subset \Mm{g}{n}.
\]
One way of extending $[D_{\vec{d}}]$ was suggested by Hain in \cite[Section~11.2, page~561]{hain13}.  The rule $(C, p_{1}, \dots, p_{n}) \mapsto \mathcal{O}( d_1 p_1 + \dots + d_{n} p_{n})$ defines a section
\begin{equation} \label{Eqn: SectionMap}
	s_{\vec{d}} \colon \Mm{g}{n} \to \J_{g,n}
\end{equation}
of the family $\J_{g,n} \to \Mm{g}{n}$, with the property that $D_{\vec{d}}$ is the preimage of the theta divisor
\[
	\Theta := \{ (C, p_1, \dots, p_n; F) \colon h^{0}(C, F) \ne 0 \}.
\]
Thus one way to extend $D_{\vec{d}}$ is to extend \eqref{Eqn: SectionMap} to a morphism $\Mb_{g,n} \to \Jb_{g,n}$ (into some extension $\Jb_{g,n}$ of $\J_{g,n}$), to extend $\Theta$ to a divisor $\ThDiv$ on $\Jb_{g,n}$, and then to take the preimage of $\ThDiv$ by ${s}_{\vec{d}}$.  The difficulty in carrying out this idea is that already the moduli space $\widetilde{\J}_{g,n}$, parameterizing line bundles of degree $g-1$ on stable curves, is not separated.    In particular,  while the section \eqref{Eqn: SectionMap} does extend to a morphism $\Mb_{g,n} \to \widetilde{\J}_{g,n}$, there is not a unique extension, an  issue already observed by Hain, who remarks that this is a ``subtle problem'' \cite[Section~11.2, page~561]{hain13}.

The failure for $\widetilde{J}_{g,n}$ to be separated is intimately related to an invariant of a line bundle $L$ on a reducible curve $C$: the multidegree.  The multidegree $\deg(L)(C)$ is defined to be the vector whose components are the degrees of the restrictions of $L$ to the irreducible components of $C$.  To obtain a well-behaved moduli space of line bundles, we impose a stability condition as a numerical condition on the multidegree of a line bundle.  There is now a large body of literature (surveyed in Section~\ref{Subsection: Remarks})  on how to construct a moduli space associated to a stability condition, and we build upon that literature, especially the work \cite{oda79} of Oda--Seshadri, to construct a collection of extensions of $\J_{g,n}(\phi)$ indexed by a linear algebra parameter $\phi$.

We focus on studying extensions of  $\J_{g, n}$  to a family over the moduli space $\Mm{g}{n}^{\text{TL}} \subseteq \Mb_{g, n}$  of treelike curves rather than the moduli space of all stable curves because, for our purposes, these moduli spaces are best suited to studying the theta divisor.  Indeed, a codimension $1$ class on an extension over $\Mb_{g, n}$ is determined by its restriction over $\Mm{g}{n}^{\text{TL}}$ because, quite generally, the Chow class of a divisor is determined by its restriction to the complement of a closed subset of codimension at least $2$.  Furthermore, we prove that the different extensions of $\J_{g,n}$ over $\Mm{g}{n}^{\text{TL}}$ are closely related to the different extensions of $\Theta$, and this relationship becomes less transparent when working over $\Mb_{g,n}$. We discuss this issue at the end of  Section~\ref{Subsection: Remarks}. The restriction to $\Mm{g}{n}^{\text{TL}}$ is not essential to the construction of the compactified Jacobians and the theta divisors, which are in fact defined as restrictions of analogous objects over $\Mb_{g, n}$.

We construct the extensions of $\J_{g,n}$ in Section~\ref{Section: polytopeDecomposition}.  There we construct an affine space $V_{g,n}^{\text{TL}}$, \emph{the stability space}, and for every nondegenerate stability parameter $\phi \in V_{g,n}^{\text{TL}}$ a family of moduli spaces $\Jb_{g, n}(\phi) \to \Mm{g}{n}^{\text{TL}}$ extending the universal Jacobian  $\J_{g,n} \to \Mm{g}{n}$.

The affine space $V_{g,n}^{\text{TL}}$ decomposes into a \emph{stability polytope decomposition}, a decomposition into polytopes such that $\phi_1$-stability coincides with $\phi_2$-stability if and only if $\phi_1$ and $\phi_2$ lie in a common polytope.  For every nondegenerate $\phi \in V_{g,n}^{\text{TL}}$, we construct a divisor $\ThDiv(\phi) \subseteq \Jb_{g, n}(\phi)$ extending $\Theta$ and then we describe how the associated Chow class $\ThDivCl(\phi)$ depends on $\phi$ as follows.  For any two nondegenerate stability parameters $\phi_1$ and $\phi_2$, there is a distinguished isomorphism between the Chow groups $\Chow^{1}(\Jb_{g, n}(\phi_{1})) \cong \Chow^{1}(\Jb_{g, n}(\phi_{2}))$, which allows to form the difference $\ThDivCl(\phi_2) - \ThDivCl(\phi_1)$.

If we further suppose that $\phi_1$ and $\phi_2$ lie on opposite sides of a wall in the stability space $V_{g,n}^{\text{TL}}$, then $\phi_1$-stability coincides with $\phi_2$-stability on every curve except those lying in some boundary divisor $\Delta_{i,S}$.   Here $\Delta_{i,S}$ is the closure of the locus of curves consisting of a genus $i$ curve connected to a genus $g-i$ curve by one node and having the markings $S$ lying on the genus $i$ curve. On such a curve, there is a unique $\phi_1$-stable bidegree of a line bundle, say $(d, g-1-d)$. After possibly switching the roles of $\phi_1$ and $\phi_2$, the only $\phi_2$-stable bidegree is $(d+1, g-2-d)$.   Denoting the (pullback to the compactified universal Jacobian of the) class of $\Delta_{i,S}$ by $\delta_{i,S}$, our main result is

\begin{theoremNL}[Theorem~\ref{Thm: wc}]

	\begin{align} \label{Eqn: WallCrossing}
		\ThDivCl(\phi_2) - \ThDivCl(\phi_1) 	=& \left(d+1-i\right) \cdot \delta_{i,S}.
	\end{align}
\end{theoremNL}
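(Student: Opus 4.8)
The plan is to compute each Chow class $\ThDivCl(\phi_k)$ explicitly enough to see the difference, using the fact that the two compactified Jacobians $\Jb_{g,n}(\phi_1)$ and $\Jb_{g,n}(\phi_2)$ agree away from (the preimage over) the divisor $\Delta_{i,S}$, and differ only in the choice of stable bidegree on the general fiber over $\Delta_{i,S}$. First I would set up the local picture: over the generic point of $\Delta_{i,S}$, a curve has two components $C_1$ (of arithmetic genus matching the side $S$) and $C_2$ meeting in a single node, and the family $\widetilde{\mathcal{J}}_{g,n}$ restricted to this locus has two components of the relevant multidegree, namely the one where $F$ has bidegree $(d-1,g-d)$ and the one where it has bidegree $(d,g-1-d)$. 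Passing from $\phi_1$ to $\phi_2$ amounts to an elementary modification (a ``flip'') along this locus: the two models $\Jb_{g,n}(\phi_1)$ and $\Jb_{g,n}(\phi_2)$ are related by twisting the universal sheaf by $\mathcal{O}(C_1)$ or $\mathcal{O}(C_2)$ along the preimage of $\Delta_{i,S}$.

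Next I would identify the distinguished isomorphism $\Chow^1(\Jb_{g,n}(\phi_1)) \cong \Chow^1(\Jb_{g,n}(\phi_2))$ concretely — presumably it is induced by the common open subset (the complement of a high-codimension locus) together with matching the boundary divisor classes — and then track the class of the theta divisor through it. The key computation is: $\ThDiv(\phi_k)$ is cut out by the condition $h^0(C,F)\neq 0$, i.e. it is the vanishing locus of a section of a line bundle built from the (determinant of cohomology of the) universal sheaf $\mathcal{F}(\phi_k)$; the theta divisor class can be expressed via Grothendieck--Riemann--Roch applied to $\mathcal{F}(\phi_k)$ on the universal curve over $\Jb_{g,n}(\phi_k)$. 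Since $\mathcal{F}(\phi_2)$ differs from $\mathcal{F}(\phi_1)$ by twisting by $\mathcal{O}(D)$ where $D$ is (a component of) the preimage of $\Delta_{i,S}$ in the universal curve, the GRR computation of $c_1(\det R\pi_* \mathcal{F})$ changes by an explicitly computable correction term supported over $\Delta_{i,S}$. Carrying out the Chern character bookkeeping — $\ch(\mathcal{F}\otimes\mathcal{O}(D)) = \ch(\mathcal{F})\cdot \ch(\mathcal{O}(D))$, then pushing forward against the Todd class of the universal curve — should produce exactly the coefficient $(d-i)$ in front of $\delta_{i,S}$, with $i$ entering because $\delta_{i,S}$ records the genus/marking split and $d$ entering as the bidegree that changes.

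The main obstacle I expect is the careful identification of the ``distinguished isomorphism'' and of the correct boundary divisor $D$ on the universal curve to twist by: one must make sure the self-intersection terms $D^2$ (which by adjunction on the universal curve produce $\psi$-type and node classes) and the interaction of $D$ with the relative dualizing sheaf are handled so that the spurious $\lambda$, $\psi_j$, and other $\delta$ contributions cancel, leaving only the $\delta_{i,S}$ term. In other words, the content is that the wall-crossing modification is \emph{local} over $\Delta_{i,S}$ in a strong sense, so all the global GRR contributions are identical for $\phi_1$ and $\phi_2$ and only the locally-supported correction survives; proving this cleanly — rather than computing both sides in closed form and subtracting, which would require the full machinery behind M\"uller's and Hain's formulas — is where the real work lies. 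A secondary technical point is verifying that $\ThDiv(\phi_k)$ really is the honest scheme-theoretic theta divisor (reduced, no excess components over the boundary) for nondegenerate $\phi_k$, so that its class is genuinely $c_1$ of the determinant-of-cohomology line bundle and the GRR computation applies without correction.
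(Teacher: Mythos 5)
Your proposal is correct and follows essentially the same route as the paper: the paper likewise observes that $F_{\text{tau}}(\phi_2)\cong F_{\text{tau}}(\phi_1)\otimes\mathcal{O}(\mathcal{C}^-_{i,S})$, that the resulting difference of determinant-of-cohomology line bundles is trivial off the preimage of $\Delta_{i,S}$ and hence equals $c\cdot\delta_{i,S}$ for some integer $c$, and then computes $c$ by Grothendieck--Riemann--Roch. The only organizational difference is that the paper evaluates the GRR correction on an explicit one-dimensional test curve meeting $\Delta_{i,S}$ transversally in a single point, which reduces the Todd-class and self-intersection bookkeeping you worry about to a short computation on a blown-up surface; also, since $\ThDiv(\phi)$ is \emph{defined} as the determinant-of-cohomology divisor (it can genuinely be nonreduced), your final concern about identifying it with the reduced locus where $h^0\neq 0$ does not arise.
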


We  use this formula in Section~\ref{PullbackTheta} to study the classes of the different extensions of $D_{\vec{d}}$.  Specifically, for every nondegenerate stability parameter $\phi$ the section in Equation \eqref{Eqn: SectionMap} extends \emph{uniquely} to a morphism
\begin{equation}
	s_{\vec{d}} \colon \Mm{g}{n}^{\text{TL}} \to \Jb_{g, n}(\phi),
\end{equation}
so we can form the preimage
\[
	\overline{D}_{\vec{d}}(\phi) := {s}_{\vec{d}}^{-1}(\ThDiv(\phi))
\]
and prove the following result.

\begin{theoremNL}[Theorem~\ref{pullback}]
For a nondegenerate stability parameter $\phi$, we have
	\begin{equation} \label{wallcrossresult}
		[D_{\vec{d}}(\phi)]= - \lambda + \sum_{j=1}^n {{d_j+1}\choose{2}} \cdot \psi_j + \sum_{(i,S)} \left( {{ d(i,S) - i+1}\choose{2}}- {{ d_S- i+1}\choose{2}} \right) \cdot \delta_{i,S},
	\end{equation}
where $d(i,S)$ is the unique integer such that $(d(i,S), g-1-d(i,S))$ is the bidegree of a $\phi$-stable line bundle on a general element of $\Delta_{i,S} \subset \Mb_{g,n}$.
\end{theoremNL}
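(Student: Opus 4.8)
The plan is to reduce the computation to the wall-crossing formula \eqref{Eqn: WallCrossing} together with a single ``base case'' computation of $[D_{\vec d}(\phi_0)]$ for one conveniently chosen stability parameter $\phi_0$. Concretely, I would first observe that since $s_{\vec d}$ extends uniquely to a morphism $\Mm{g}{n}^{(0)}\to\Jb_{g,n}(\phi)$ and $\overline D_{\vec d}(\phi)=s_{\vec d}^{-1}(\ThDiv(\phi))$, the class $[D_{\vec d}(\phi)]$ is the pullback $s_{\vec d}^*\ThDivCl(\phi)$ under the map on $\Chow^1$ induced by $s_{\vec d}$, where we use the distinguished identifications of the $\Chow^1(\Jb_{g,n}(\phi))$ to compare across different $\phi$. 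The wall-crossing formula then gives, for the two parameters $\phi_1,\phi_2$ adjacent across the wall associated to $(i,S)$,
\[
	[D_{\vec d}(\phi_2)] - [D_{\vec d}(\phi_1)] = s_{\vec d}^*\bigl(\ThDivCl(\phi_2)-\ThDivCl(\phi_1)\bigr) = (d-i)\cdot s_{\vec d}^*\delta_{i,S},
\]
and since $s_{\vec d}$ is a section of $\J_{g,n}\to\Mm{g}{n}$ extended over $\Mm{g}{n}^{(0)}$, the pullback $s_{\vec d}^*\delta_{i,S}$ is just the boundary divisor $\delta_{i,S}$ on $\Mm{g}{n}^{(0)}$ (this needs checking: $s_{\vec d}$ is transverse to $\Delta_{i,S}$, or at least maps $\Delta_{i,S}$ into the boundary of $\Jb_{g,n}(\phi)$ with multiplicity one). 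Moreover $d$ in the wall-crossing formula is $d(i,S)$ for the parameter on the appropriate side, so iterating across a chain of walls from $\phi_0$ to $\phi$ telescopes: the coefficient of $\delta_{i,S}$ in $[D_{\vec d}(\phi)]$ differs from that in $[D_{\vec d}(\phi_0)]$ by $\sum (d-i)$ over the walls crossed in the $(i,S)$-direction, which collapses to $\binom{d(i,S)-i+1}{2}-\binom{d_0(i,S)-i+1}{2}$ by the elementary identity $\binom{m+1}{2}-\binom{m}{2}=m$.

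For the base case I would choose $\phi_0$ so that the associated $\phi_0$-stable bidegree on a general element of $\Delta_{i,S}$ has $d_0(i,S)=d_S$ for every $(i,S)$ — i.e.\ the stability parameter for which the degree on the $S$-side is exactly $d_S=\sum_{j\in S}d_j$, matching the ``naive'' bidegree of $\mathcal O(d_1p_1+\dots+d_np_n)$ restricted to the two components of a generic curve in $\Delta_{i,S}$. For such $\phi_0$, the section $s_{\vec d}$ maps the generic point of $\Delta_{i,S}$ to a line bundle whose bidegree is already stable, so $s_{\vec d}(\Delta_{i,S})$ is \emph{not} contained in the non-separated locus and one expects $\overline D_{\vec d}(\phi_0)$ to agree (after the identification of Chow groups) with M\"uller's closure $\overline D_{\vec d}(\text{M\"u})$ on the part of $\Mm{g}{n}^{(0)}$ where $S\subseteq S^+$, giving base coefficients $-\lambda$ on $\lambda$, $\binom{d_j+1}{2}$ on $\psi_j$, and $-\binom{d_S-i+1}{2}$ on $\delta_{i,S}$. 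Plugging $d_0(i,S)=d_S$ into the telescoped formula then yields exactly the claimed coefficient $\binom{d(i,S)-i+1}{2}-\binom{d_S-i+1}{2}$, with the $\lambda$ and $\psi_j$ terms unaffected by wall-crossing since \eqref{Eqn: WallCrossing} only involves $\delta_{i,S}$.

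The main obstacle I anticipate is pinning down the base case rigorously: one must (a) verify that for the chosen $\phi_0$ the extended section $s_{\vec d}\colon\Mm{g}{n}^{(0)}\to\Jb_{g,n}(\phi_0)$ really does send the generic point of each $\Delta_{i,S}$ to the $\phi_0$-stable locus with the expected multidegree, so that the preimage computation is transverse, and (b) identify the resulting class with the expected $-\lambda+\sum\binom{d_j+1}{2}\psi_j-\sum\binom{d_S-i+1}{2}\delta_{i,S}$. For (b) the cleanest route is probably a direct Grothendieck--Riemann--Roch computation: $\ThDiv(\phi)$ is the vanishing locus of a section of a determinant-of-cohomology line bundle on $\Jb_{g,n}(\phi)$, so $\ThDivCl(\phi)=-c_1$ of that line bundle up to the correction terms supported on the boundary, and pulling back along $s_{\vec d}$ reduces to computing $\ch$ of $R\pi_*\mathcal O(d_1\sigma_1+\dots+d_n\sigma_n)$ on the universal curve over $\Mm{g}{n}^{(0)}$ via GRR — the $\lambda$ and $\psi_j$ terms come out of the standard Mumford-type calculation, and the boundary terms are controlled by the self-intersection formula for the sections $\sigma_j$ and the structure of the universal curve along $\delta_{i,S}$. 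An alternative, if available in the paper, is simply to cite the known identification of $\overline D_{\vec d}(\phi_0)$ with one of M\"uller's or Hain's divisors on the relevant open locus and let wall-crossing do the rest; I would try that first and fall back on GRR only if the identification is not already in hand. A minor additional point to address is that the formula as stated must be independent of the chain of walls chosen from $\phi_0$ to $\phi$ — this follows because the right-hand side depends only on the endpoint data $d(i,S)$, which is intrinsic to $\phi$, but it is worth remarking that the wall-crossing formula is consistent in this sense (a cocycle condition), which is implicit in Theorem~\ref{Thm: wc}.
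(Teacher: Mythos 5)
Your overall architecture --- compute $[\overline D_{\vec d}(\phi_{\vec d})]$ for the distinguished parameter $\phi_{\vec d}$ with $d_0(i,S)=d_S$, then telescope the wall-crossing formula using $\binom{m+1}{2}-\binom{m}{2}=m$ and the fact that $s_{\vec d}^*\delta_{i,S}=\delta_{i,S}$ --- is exactly the paper's, and the telescoping step is correct. The gap is in the base case, and it shows up as an internal contradiction in your own write-up: you assert that $[\overline D_{\vec d}(\phi_{\vec d})]$ agrees with M\"uller's class and therefore carries the boundary coefficient $-\binom{d_S-i+1}{2}$ on $\delta_{i,S}$, but your final "plugging in $d_0(i,S)=d_S$" step produces the claimed coefficient $\binom{d(i,S)-i+1}{2}-\binom{d_S-i+1}{2}$ only if the base coefficient of $\delta_{i,S}$ is \emph{zero}. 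It cannot be both. The correct statement, which is what the theorem forces when you set $\phi=\phi_{\vec d}$ (so $d(i,S)=d_S$), is
\[
s_{\vec d}^*\,\ThDivCl(\phi_{\vec d})=-\lambda+\sum_{j=1}^n\binom{d_j+1}{2}\psi_j,
\]
with no boundary terms at all. In particular the identification with $\overline D_{\vec d}(\text{M\"u})$ is false: M\"uller's class has nonzero $\delta_{i,S}$ coefficients, so the two classes differ, and the actual relation between $\overline D_{\vec d}(\text{M\"u})$ and the $\phi$-divisors (Equation~\eqref{differencemuller}) is stated for $\phi_0$ near $\phi_{\text{can}}$, not for $\phi_{\vec d}$, and involves the exceptional set $T_{\vec d}$.

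Your proposed fallback --- a direct Grothendieck--Riemann--Roch computation of $c_1(\mathbb R\pi_*\mathcal O(d_1p_1+\cdots+d_np_n))$ on the universal curve --- is precisely how the paper closes this gap, and it is not optional: the nontrivial content of the base case is that the boundary contributions coming from $\mathcal D\cdot K_{\C_{g,n}}/2$ and from $-\pi_*\td_2(\C_{g,n})$ cancel exactly (each contributes $\mp(g-1)\delta$), leaving neither $\delta_{i,S}$ nor $\delta_{irr}$ terms. Without carrying that computation out (or otherwise proving the vanishing of the base boundary coefficients), the argument as written does not establish the formula; with it, the rest of your proof goes through.
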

Our proof illuminates the structure of this formula. There is a stability parameter $\phi_{\vec{d}}$ such that $\mathcal{O}_C(d_1 p_1 + \ldots + d_n p_n)$ is $\phi_{\vec{d}}$-stable for which
\[
[D_{\vec{d}}(\phi_{\vec{d}})]= - \lambda + \sum_{j=1}^n {{d_j+1}\choose{2}} \cdot \psi_j
\]
and the other terms appearing in Formula \eqref{wallcrossresult} arise by applying the wall-crossing Formula~\eqref{Eqn: WallCrossing}.

A second natural extension of $D_{\vec{d}}$ is the Zariski closure, which we call $\overline{D}_{\vec{d}}(\text{M\"{u}})$ because the corresponding cycle class was computed by M\"uller in  \cite[Theorem~5.6]{mueller13}. A third extension was given by Hain, who extended it to a rational Chow class $[\overline{D}_{\vec{d}}(\text{Ha})]$ using the formalism of theta functions and then computed its class in  \cite[Theorem~11.7]{hain13}. Using different methods, both results were reproved by Grushevsky and  Zakharov  in \cite[Theorem~2, Theorem~6]{grushevsky14a}.

A fourth important extension of $\J_{g,n}$ is given by the theory of degenerate principally polarized abelian varieties.  The family $(\J_{g,n}/ \Mm{g}{n}, \Theta)$ is a family of principally polarized torsors for abelian varieties, and this family uniquely extends to a family $(\overline{\J}_{g,n}/\Mb_{g,n}, \overline{\Theta})$ of stable semiabelic pairs, or stable principally polarized degenerate abelian varieties.  The resulting morphism $s_{\vec{d}} \colon \Mb_{g,n} \dashrightarrow \Jb_{g,n}$ is only a rational morphism, but rational morphisms induce pullback maps on Chow groups, so we can define the extension
\[
	[\overline{D}_{\vec{d}}(\text{SP})] := s_{\vec{d}}^{*}(\ThDivCl).
\]

 We describe the relation between the divisors $[\overline{D}_{\vec{d}}(\phi)]$ and  $[\overline{D}_{\vec{d}}(\text{Ha})]$, $[\overline{D}_{\vec{d}}(\text{SP})]$, $[\overline{D}_{\vec{d}}(\text{M\"{u}})]$ in Sections~\ref{hgz}, \ref{Subsect: StablePair}, \ref{Subsect: Mueller} respectively.
 In particular, we prove the new result
\begin{corollaryNL} (Corollary~\ref{Cor: StablePair}) \label{Corollary: NL}
	The pullback of the theta divisor of the family of stable semiabelic pairs extending $(\J_{g,n}, \Theta)$ satisfies
	\begin{align}
		[\overline{D}_{\vec{d}}(\text{SP})] 	=& [\overline{D}_{\vec{d}}(\phi)] \quad \text{for any nondegenerate $\phi$ such that $\phi_{\text{can}} \in \overline{\mathcal{P}}(\phi)$}\label{Eqn: PairOne} \\
\label{Eqn: PairTwo}						=&  - \lambda + \sum_{j=1}^n {{d_j+1}\choose{2}}\cdot  \psi_j - \sum_{(i,S)} {{d_S -i +1} \choose {2}}\cdot  \delta_{i,S}.
	\end{align}
\end{corollaryNL}
\noindent Here $\overline{\mathcal{P}}(\phi)$ denotes the closure of the unique stability polytope containing $\phi$, and the stability parameter $\phi_{\text{can}}$ is  a distinguished degenerate parameter called the canonical parameter. The corresponding compactified universal Jacobian is the one studied by  e.g.~Caporaso \cite{caporaso08a, caporaso09} and Pandharipande \cite{pand96}.   The stability parameter $\phi_{\text{can}}$ is a vertex of the stability polytope decomposition (Remark~\ref{phicanvertex}), so there are many stability polytopes $\mathcal{P}$ satisfying $\phi_{\text{can}} \in \overline{\mathcal{P}}$.  These stability polytopes play a distinguished role because they are exactly the polytopes $\mathcal{P}$ such that $\ThDiv(\phi) \to \Mm{g}{n}^{\text{TL}}$ is flat for $\phi \in \calP$ (Lemma~\ref{Lemma: WhenIsPhiFlat?}).

After this paper was first posted to the arXiv, the authors were made aware of related work of Bashar Dudin.  In \cite{dudin}, Dudin studies certain divisors, including the theta divisor, on the compactified universal Jacobians constructed by Melo in  \cite{melo16} and computes their pullbacks to  $\Mm{g}{n}^{\text{TL}}$  using techniques similar to the ones used in this papers.  As we explain in Section~\ref{Subsect: StablePair}, one of the moduli spaces studied by Dudin is, in the notation of this paper,  $\Jb_{g,n}(\phi)$ for a certain $\phi$ satisfying $\phi_{\text{can}} \in \overline{\mathcal{P}}$, and he computes the pullback of $\ThDiv(\phi)$ as the class  in Equation~\eqref{Eqn: PairTwo}. The authors first became aware of Dudin's work on July 14, 2015.  The authors first posted their preprint to the arXiv on July 13, 2015 and first publicly presented their work in a seminar on March 10, 2015.  Dudin posted his paper to the arXiv on May 12, 2015.

\section {Conventions}
We work over a fixed algebraically closed field $k$ of characteristic $0$.

A  \textbf{curve} over a field $\spec(K)$ is a $\spec(K)$-scheme $C/\spec(K)$ that is proper over $\spec(K)$, geometrically connected, and pure of dimension $1$. A curve $C/\spec(K)$ is a \textbf{nodal curve} if $C$ is geometrically reduced and the completed local ring of $C \otimes \overline{K}$ at a non-regular point is isomorphic to $\overline{K}[[x,y]]/(xy)$.  Here $\overline{K}$ is an algebraic closure of $K$.

A \textbf{family of curves} over a $k$-scheme $T$ is a proper, flat morphism $C \to T$ whose fibers are curves. A family of curves $C \to T$ is a \textbf{family of nodal curves} if the fibers are nodal curves.

If $F$ is a rank~$1$, torsion-free sheaf on a nodal curve $C$ with irreducible components $ C_i$, then we define the \textbf{multidegree} by ${\deg}(F)(C) := (\deg(F_{C_{i}}))$.  Here $F_{C_{i}}$ is the maximal torsion-free quotient of $F \otimes \mathcal{O}_{C_{i}}$. We define the \textbf{total degree}  to be $\deg_C(F):=\chi(F)-1+p_a(C)$ where $p_a(C)= h^1(C, \mathcal{O}_C)$ is the arithmetic genus of $C$. The total degree and the multidegree of $F$ are related by the formula $\deg_C(F) = \sum \deg_{C_i} F - \delta_C(F)$, where $\delta_C(F)$ denotes the number of nodes of $C$ where $F$ fails to be locally free.

Given a ring $R$ and a set $S$, we write $R^{S}$ for the $R$-module of functions $S \to R$, a free $R$-module with basis indexed by $S$.

We  work with several divisors and their classes. If $\mathcal{X}$ is a smooth Deligne--Mumford stack and $\mathcal{U}$ is an open substack whose complement has codimension $\geq 2$, any Chow class in $\Chow^1(\mathcal{X})$ is completely determined by its restriction to $\mathcal{U}$. For this reason we abuse the notation and denote with the same symbol a Chow class in $\Chow^1(\mathcal{X})$ and in $\Chow^1(\mathcal{U})$. The main examples are the theta divisors introduced in Definition~\ref{Def: Theta}. 

\subsection{Graphs}

A graph $\Gamma$ is a tuple $(\operatorname{Vert}, \operatorname{HalfEdge}, \operatorname{a}, \operatorname{i})$ consisting of a set $\operatorname{Vert}$ that we call the vertex set, a set $\operatorname{HalfEdge}$ that we call the half-edges set, an assignment function $\operatorname{a} \colon \operatorname{HalfEdge} \to \operatorname{Vert}$, and a fixed point free involution $\operatorname{i} \colon \operatorname{HalfEdge} \to \operatorname{HalfEdge}$. The edge set  is defined as the quotient set $\operatorname{Edge}:= \operatorname{HalfEdge} / \operatorname{i}$. The endpoints of an edge $e \in \operatorname{Edge}$ are defined to be $v_1=a(h_1)$ and $v_2=a(h_2)$, where $\{h_1, h_2\}$ is the equivalence class represented by $e$. A loop based at $v$ is an edge whose endpoints both equal $v$.

A $n$-marked graph is a graph $\Gamma$ together with a (genus) map $g \colon \operatorname{Vert}(\Gamma) \to \mathbb{N}$ and a (markings) map $p \colon \{ 1, \dots, n \} \to \operatorname{Vert}(\Gamma)$.  We call $g(v)$ the (geometric) genus of $v \in \operatorname{Vert}(\Gamma)$.  If $v=p(j)$, then we say that the marking $j$ lies on the vertex $v$. A subgraph $\Gamma'$ is always assumed to be proper ($\operatorname{Vert}(\Gamma') \subsetneq \operatorname{Vert}(\Gamma)$) and complete (for all $v_1, v_2 \in \operatorname{Vert}(\Gamma')$, if $h_1, h_2 \in \operatorname{HalfEdge}(\Gamma)$, $\operatorname{a}(h_i) = v_i$ and $\operatorname{i}(h_1)=h_2$, then $h_1, h_2 \in \operatorname{HalfEdge}(\Gamma')$) and it is given the induced genus and marking maps. Because of these assumptions, to define a subgraph $\Gamma' \subset \Gamma$ it is enough to specify its vertex set $\operatorname{Vert}(\Gamma') \subset \operatorname{Vert}(\Gamma)$.

We say that a $n$-marked graph is \textbf{stable} if it is connected, and if for all $v$ with $g(v)=0$, the sum of the number of edges with $v$ as an endpoint plus the number of markings lying on $v$ is at least $3$ (when counting edges, count a loop based at $v$ twice). The (arithmetic) \textbf{genus} of the graph is $g(\Gamma) := \sum_{v \in \operatorname{Vert}(\Gamma)} g(v) -\# \operatorname{Vert}(\Gamma)+  \# \operatorname{Edge}(\Gamma)+1 $.

An \textbf{isomorphism} of $\Gamma= (\operatorname{Vert}, \operatorname{HalfEdge}, \operatorname{a}, \operatorname{i})$ to  $\Gamma'= (\operatorname{Vert}', \operatorname{HalfEdge}', \operatorname{a}', \operatorname{i}')$ is a pair of bijections $\alpha_V \colon \operatorname{Vert} \to \operatorname{Vert}'$ and $\alpha_{\operatorname{HE}} \colon \operatorname{HalfEdge} \to \operatorname{HalfEdge}'$ that satisfy the compatibilities $\alpha_{\operatorname{HE}} \circ i = i'$ and $\alpha_V \circ a = a'$. If $\Gamma$ and $\Gamma'$ are endowed with structures of $n$-marked graphs by the maps $(g,p)$ and by $(g',p')$ respectively, $(\alpha_V, \alpha_{\operatorname{HE}})$ is an isomorphism of $n$-marked graphs if it also satisfies the compatibilities $\alpha_V \circ p = p'$ and $\alpha_V \circ g = g'$. An \textbf{automorphism} is an isomorphism of a graph to itself.

If $\Gamma$ is a $n$-marked graph and $e \in \operatorname{Edge}(\Gamma)$ is an edge,  the \textbf{strict contraction} of $e$ in $\Gamma$ is the graph $\Gamma_e$ where the half-edges corresponding to $e$ are removed, the two (possibly coinciding) endpoints $v_1$ and $v_2$ of $e$ are replaced by a unique vertex $v_e$, and the genus and marking functions are extended to $v_e$ by $p_e(j):=v_e$ whenever $p(j)$ equals $v_1$ or $v_2$, and \[g_e(v_e):= \begin{cases}g(v_1)+g(v_2) & \textrm{ when } e \textrm{ is not a loop},\\ g(v_1)+1 & \textrm{ when } e \textrm{ is a loop.} \end{cases}\]
A stable graph $\Gamma$ is \textbf{treelike} if the graph obtained from $\Gamma$ by strictly contracting all loops is a tree.

If $\Gamma$ and $\Gamma'$ are $n$-marked graphs, a \textbf{contraction}   $c\colon \Gamma \to \Gamma'$ of $e$ in $\Gamma$ is an isomorphism of the strict contraction $\Gamma_e$ to $\Gamma'$. The contraction $c$ is completely determined by the two maps  $c_V \colon \operatorname{Vert}(\Gamma) \to \operatorname{Vert}(\Gamma')$ and $c_{\operatorname{HE}} \colon \operatorname{HalfEdge}(\Gamma) \to \operatorname{HalfEdge}(\Gamma')$ that it induces on vertices and on half-edges respectively.

We will need to take products over certain collections of treelike $n$-marked graphs of genus $g$. To avoid set-theoretic issues (the category of such graphs is essentially small but not small), we fix once and for all a finite set $\mathcal{G}_{g,n}^{\text{TL}}$ of treelike $n$-marked graphs of genus $g$, one for each isomorphism class. (Note that the strict contraction of a graph in $\mathcal{G}_{g,n}^{\text{TL}}$ may not itself belong to $\mathcal{G}_{g,n}^{\text{TL}}$, and this is why we have introduced a more general notion of contraction).

In the following, whenever we write a product over a collection of graphs, the set of indices will always be assumed to be contained in $\mathcal{G}_{g,n}^{\text{TL}}$.

\label{graphs}
\subsection{Moduli of curves}

Throughout this paper, we fix natural numbers  $g$ and $n$ satisfying $2g-2+n >0$. In Sections \ref{Section: Wallcrossing} and \ref{PullbackTheta} we make the further assumption that $n \geq 1$. We denote  by $\pi \colon \Cb_{g,n} \to \Mb_{g,n}$ the universal curve over the moduli stack of stable curves.

\begin{definition} \label{Definition: LoopfreeRank}
	If $(C, p_1, \dots, p_n)$ is a stable pointed curve, we define the \textbf{dual graph} $\Gamma_{C}$ to be the $n$-marked graph whose vertices are the irreducible components of $C$,  whose edges are the nodes of $C$, whose genus map is given by assigning the geometric genus to each irreducible component, and whose markings map is the assignment $p \colon \{ 1, \dots, n \} \to \operatorname{Vert}(\Gamma_{C})$ such that  $p(j)$ is the irreducible component containing $p_j$. A stable pointed curve is \textbf{treelike} if its dual graph is treelike. (A stable curve is treelike if and only if each of its nodes is either separating, or it lies on a unique irreducible component).
	
	Given a stable marked graph $\Gamma$, we define $\Mm{g}{n}(\Gamma)$ to be the locally closed substack of $\Mb_{g,n}$ parameterizing  curves with dual graph $\Gamma$.  We define $\Mm{g}{n}^{\text{TL}} \subseteq \Mb_{g,n}$ to be the open substack parameterizing treelike curves. The locus of non-treelike curves has codimension $2$.
\end{definition}

\begin{definition} \label{Def: TwoComponentGraph}
	For a given pair $(i, S)$, we define $\Gamma(i, S)$ to be the graph with two vertices $v_1$ and $v_2$ and one edge connecting them, with genera  $g(v_1)=i$ and $g(v_2)=g-i$, and markings
	\[
		p(j) = \begin{cases}
					v_1 & \text{ if $j \in S$;} \\
					v_2 & \text{ otherwise.}
				\end{cases}
	\]
	If $\Gamma(i,S)$ is stable, the {\bf boundary divisor} $\Delta_{i,S} = \Delta_{g-i, [n] \setminus S}$ is the closure of $\Mm{g}{n}(\Gamma(i, S))$ in $\Mb_{g,n}$. The boundary divisor $\Delta_{irr}$ is the closure of the locus of irreducible singular curves. The corresponding Chow classes are denoted by $\delta_{i,S}$ and $\delta_{irr}$ respectively.
\end{definition}

 In this paper, we will often need to sum over the set of all boundary divisors in $\Mb_{g,n}$ whose preimage in $\Cb_{g,n}$ consists of two irreducible components. We introduce the following notation for the set of indices of such sums.
\begin{notation}
The restriction of the universal curve  to $\Delta_{i, S}$ has two irreducible components, unless when $n=0$ and $i=g/2$ when it is irreducible. If $n \geq 1$, we write $\C^{+}_{i, S}$ for the irreducible component that contains the markings $S$ and $\C^{-}_{i, S}$ for the other component. If $n=0$ and $i \neq g/2$, we write $\C^{+}_{i, S}$ for the irreducible component of smallest genus.

\label{Definition: AdmissiblePair}
We require all pairs $(i, S)$ with $i \in \{0, \dots, g\}$ and $S \subset \{ 1, \dots, n \}$ to satisfy the following.
\begin{itemize}
\item When $n \geq 1$, if $i=0$  then  $\#S \ge 2$, if $i=g$ then $\#S  \le 2$, and $1 \in S$.
\item When $n=0$ we assume $0<i< g/2$.
\end{itemize}
When summing over pairs $(i,S)$ we always implicitly assume that the summation ranges over pairs $(i,S)$ that satisfy this requirement.
\end{notation}


\section {Stability conditions} \label{Section: polytopeDecomposition}

In this section we define families  $\Jb_{g,n}(\phi) \to \Mm{g}{n}^{\text{TL}}$ that extend the universal degree $g-1$ Jacobian $\J_{g,n} \to \Mm{g}{n}$ and effective Cartier divisors  $\ThDiv(\phi) \subset \Jb_{g,n}(\phi)$ that extend the family of theta divisors $\Theta \subset \J_{g,n}$.  The families and the effective Cartier divisors are defined as the restriction to the treelike locus of analogous objects over $\Mb_{g,n}$. The families are indexed by  stability parameters $\phi \in V_{g,n}^{\text{TL}}$  lying in an affine space $V_{g,n}^{\text{TL}}$.  We describe the dependence of $\Jb_{g,n}(\phi)$ on $\phi$ by constructing a polytope decomposition of $V_{g,n}^{\text{TL}}$, called the stability polytope decomposition, with the property that $\phi_1$-stability concides with $\phi_2$-stability if and only $\phi_1$ and $\phi_2$ lie in a common polytope.

Our construction of the $\Jb_{g,n}(\phi)$'s is perhaps not the first construction that one might try.  A natural first approach is to define $V_{g,n}^{\text{TL}}$ to be the relative ample cone $\operatorname{Amp}$ inside the relative N\'{e}ron--Severi space $\Pic(\C_{g,n})_{\mathbb{R}}/\pi^{*}\Pic(\Mm{g}{n}^{\text{TL}})_{\mathbb{R}}$ and then for $\phi \in \operatorname{Amp}$ to set $\Jb_{g,n}(\phi)$ equal to the moduli space of degree $g-1$ rank~$1$, torsion-free sheaves that are slope semistable with respect to $\phi$.  For our purposes, this does not lead to a satisfactory theory because, as was observed in \cite[1.7]{alexeev04}, the condition of $\phi$-slope stability for degree $g-1$ sheaves is independent  of $\phi$, so this approach produces only one family $\Jb_{g,n}(\phi)$.  Furthermore, this family is a stack with points that have positive dimensional stabilizers because there are sheaves that are strictly semistable, and the presence of positive dimensional stabilizers  complicates the intersection theory of $\Jb_{g,n}(\phi)$ (see e.g.~\cite{edidin13}).  Below we modify this (unsuccessful) approach to construct a large collection of families $\Jb_{g,n}(\phi)$ that are smooth Deligne--Mumford stacks.

This section is organized as follows.  In Section~\ref{Subsection: Definitions} we define  $\phi$-stability and related concepts, in Section~\ref{Subsection: Combo} we define the stability polytope decomposition, and then in Section~\ref{Subsection: Rep} we construct the family $\Jb_{g,n}(\phi) \to \Mm{g}{n}^{\text{TL}}$ of compactified Jacobians and the family of theta divisors $\ThDiv(\phi) \subset \Jb_{g,n}(\phi)$ associated to a nondegenerate stability parameter $\phi$.  Finally  in Section~\ref{Subsection: Remarks} we make some remarks about the definitions of $V_{g,n}^{\text{TL}}$, $\Jb_{g,n}(\phi)$ and their relations to constructions from the literature.

\subsection{The stability space} \label{Subsection: Definitions}

The stability condition we study is the following.
\begin{definition} \label{Def: polytopeDecomp}
	Given a stable marked graph $\Gamma$ of genus $g$, define $V(\Gamma) \subset \mathbb{R}^{\operatorname{Vert}(\Gamma)}$ to be the affine subspace of vectors 	$\phi$ satisfying
	\[
		 \sum_{v \in \operatorname{Vert}(\Gamma)} \phi(v) = g-1.
	\]

	If $C$ is a  stable pointed curve with dual graph $\Gamma$ and $C_{0} \subset C$ is a subcurve with dual graph $\Gamma_0 \subset \Gamma$, then we write  $\deg_{\Gamma_0}(F)$ or  $\deg_{C_{0}}(F)$   for the total degree of the maximal torsion-free quotient of $F \otimes \mathcal{O}_{C_{0}}$.  We write $C_{0} \cap C_{0}^{c}$ or $\Gamma_{0} \cap \Gamma_{0}^{c}$ for the set of edges $e \in \operatorname{Edge}(\Gamma)$ that join a vertex of $\Gamma_0$ to a vertex of its complement $\Gamma_{0}^{c}$.
	
	Given $\phi \in V(\Gamma)$ we define a degree $g-1$ rank~$1$, torsion-free sheaf $F$ on a curve $C/k$ defined over an algebraically closed field to be \textbf{$\phi$-semistable} (resp.~\textbf{$\phi$-stable}) if
	\begin{equation} \label{Eqn: DefOfStability}
		\deg_{\Gamma_{0}}(F) \ge \sum \limits_{v \in \operatorname{Vert}(\Gamma_0)} \phi(v) - \frac{\#(\Gamma_{0} \cap \Gamma_{0}^{c})}{2}  \  \text{ (resp.~$>$)}
	\end{equation}
	for all proper subgraphs  $\Gamma_0 \subset \Gamma$.  We say that $\phi \in V(\Gamma)$ is \textbf{nondegenerate} if every $\phi$-semistable sheaf is $\phi$-stable.
\end{definition}
This notion of $\phi$-stability is essentially taken from \cite{oda79}, and it is closely related to other stability conditions appearing in the literature.  We discuss the connection between this condition and other stability conditions in more depth in Section~\ref{Subsection: Remarks}.

\begin{remark}
	Nondegenerate $\phi$'s exist. For example any $\phi$ with irrational coefficients is nondegenerate.
\end{remark}

\begin{remark}
	We have defined a sheaf $F$ to be $\phi$-semistable if an explicit lower bound on $\deg_{\Gamma_{0}}(F)$ holds, but this condition is equivalent to an explicit upper bound.

Given a subcurve $C_{0} \subset C$ with dual graph $\Gamma_0 := \Gamma_{C_0}$ and a rank~$1$, torsion-free sheaf $F$ of degree $g-1$, we have $\deg_{C_{0}}(F)+\deg_{C_{0}^{c}}(F)=g-1-\delta_{\Gamma_{0}}(F)$ for $\delta_{\Gamma_{0}}(F)$ the number of nodes $p \in \Gamma_{0} \cap \Gamma_{0}^{c}$ such that the stalk of $F$ at $p$ fails to be locally free.  As a consequence, the $\phi$-semistability (resp.~$\phi$-stability) inequality can be rewritten as
\begin{equation} \label{Eqn: AltDefOfStability}
		 \deg_{\Gamma_{0}}(F) \le  \sum \limits_{v \in \operatorname{Vert}(\Gamma_0)} \phi(v)  -\delta_{\Gamma_0}(F)+ \frac{\#(\Gamma_{0} \cap \Gamma_{0}^{c})}{2}  \ \text{ (resp.~$<$).}
	\end{equation}

	If we combine Inequalities~\eqref{Eqn: DefOfStability} and \eqref{Eqn: AltDefOfStability}, we obtain a third formulation of $\phi$-semistability (resp.~$\phi$-stability): $F$ is $\phi$-semistable (resp.~$\phi$-stable) if and only if
	\begin{equation} \label{Eqn: SymDefOfStability}
		\left| \deg_{\Gamma_0}(F)- \sum \limits_{v \in \operatorname{Vert}(\Gamma_0)} \phi(v) + \frac{\delta_{\Gamma_0}(F)}{2} \right| \le  \frac{\#(\Gamma_0 \cap \Gamma_0^{c})-\delta_{\Gamma_0}(F)}{2}  \  \text{ (resp.~$<$).}
	\end{equation}
\end{remark}

The condition of $\phi$-stability is closely related to slope stability, a condition we now recall.

\begin{definition} \label{Def: SlopeSS}
	If $A$ is an ample line bundle on a curve $C/k$ defined over an algebraically closed field, then the \textbf{slope} of a coherent sheaf $F$ is defined to be $\mu_{A}(F) =a/r$, where $r$ and $a$ are the coefficients of the Hilbert polynomial $\chi(F \otimes A^{\otimes n}) = r n + a$.

A rank~$1$, torsion-free sheaf $F$ is said to be \textbf{slope semistable} (resp.~slope stable) with respect to $A$  if $\mu_{A}(G) \le \mu_{A}(F)$  (resp.~$\mu_{A}(G) < \mu_{A}(F)$) for all subsheaves $G \subset F$.
	
	If furthermore $M$ is a line bundle on $C$, then we say that a rank~$1$, torsion-free sheaf $F$ is \textbf{twisted slope semistable}  (resp.~slope stable) with respect to $(A,M)$ if $F \otimes M$ is slope semistable (resp.~slope stable) with respect to $A$.
\end{definition}

When $G \subset F$ is the kernel of the natural surjection $F \to F \otimes \mathcal{O}_{C_0}$ for a subcurve $C_0 \subset C$, an explicit computation of slopes shows that the inequality $\mu_{A}(G \otimes M) \le \mu_{A}(F \otimes M)$ can be rewritten as
\begin{equation} \label{Eqn: SlopeAsPhi}
	\deg_{\Gamma_0}(F) \ge \sum \limits_{v \in \operatorname{Vert}(\Gamma_0)} \left( \frac{\deg_{v}(A)}{\deg(A)} (d+1-g+\deg(M)) + \frac{\deg_{v}(\omega_C)}{2} - \deg_{v}(M) \right) - \frac{\#(\Gamma_0 \cap \Gamma_0^{c})}{2}
\end{equation}
Here $\Gamma_0$ is the dual graph of $C_0$ and $d$ is the total degree of $F$.

This slope computation is sketched in \cite[pages~1245--1246]{alexeev04}.  Furthermore, at the bottom of \cite[pages~1244]{alexeev04}, Alexeev observes that it is sufficient to check inequality $\mu_{A}(G \otimes M) < \mu_{A}(F \otimes M)$ (resp.~$\mu_{A}(G \otimes M) \le \mu_{A}(F \otimes M)$) when $G$  is of the form $G = \ker( F \to F \otimes \mathcal{O}_{C_0})$ for some subcurve $C_0 \subset C$.  From this, we deduce the following lemma.

\begin{lemma} \label{Lemma: PhiContainsSimpson}
	Let $(C, p_1, \dots, p_n)$ be a stable pointed curve and $A$ and $M$ line bundles on $C$ with $A$ ample.  If $\phi(A, M) \in V(\Gamma_{C})$ is defined by setting for all $v \in \operatorname{Vert}(\Gamma_{C})$
	\begin{equation} \label{Eqn: SlopeToPhi}
		\phi(A,M)(v)  := \frac{\deg_{v}(A)}{\deg(A)} \deg(M) + \frac{\deg_{v}(\omega_{C})}{2}-\deg_{v}(M),
	\end{equation}
	then a degree $g-1$ rank~$1$, torsion-free sheaf $F$ is $\phi(A,M)$-semistable (resp.~$\phi(A,M)$-stable) if and only if $F$ is twisted slope semistable (resp.~stable) with respect to $(A, M)$.
\end{lemma}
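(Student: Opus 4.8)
The plan is to verify the claimed equivalence by directly comparing the slope-(semi)stability inequalities for $F\otimes M$ with respect to the ample line bundle $A$ against the $\phi(A,M)$-(semi)stability inequalities for $F$, using the defining formula \eqref{Eqn: SlopeToPhi}. Recall that slope semistability of a rank $1$, torsion-free sheaf $G$ with respect to $A$ is the statement that for every proper subcurve $C_0 \subset C$ (equivalently every proper subgraph $\Gamma_0 \subset \Gamma_C$), the inequality
\[
\frac{\chi(G_{C_0})}{\deg_{C_0}(A)} \le \frac{\chi(G)}{\deg(A)}
\]
holds (with ``$<$'' for stability), where $G_{C_0}$ denotes the maximal torsion-free quotient of $G\otimes\mathcal{O}_{C_0}$. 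First I would rewrite this Euler-characteristic inequality in terms of degrees: by Riemann--Roch on the (possibly disconnected, possibly singular) subcurve, $\chi(G_{C_0}) = \deg_{C_0}(G) + \chi(\mathcal{O}_{C_0})$, and $\chi(\mathcal{O}_{C_0})$ can be expressed via the genus data of $\Gamma_0$ and the number of edges $\#(\Gamma_0\cap\Gamma_0^c)$ joining $\Gamma_0$ to its complement; concretely $\chi(\mathcal{O}_{C_0}) = 1 - g(\Gamma_0)$ where the arithmetic genus of the subcurve satisfies $g(\Gamma_0) = \sum_{v\in\operatorname{Vert}(\Gamma_0)} g(v) + b_1(\Gamma_0)$, and one relates $\deg_v(\omega_X)$ to $2g(v)-2$ plus the valence of $v$. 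This is the routine bookkeeping step.

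Next I would substitute $G = F\otimes M$, so that $\deg_{C_0}(G) = \deg_{C_0}(F) + \deg_{C_0}(M)$ and similarly $\chi(G) = (g-1) + \deg(M) + (1-g) = \deg(M)$ on all of $C$ (using $\deg(F) = g-1$). Plugging these into the slope inequality and clearing the positive denominator $\deg_{C_0}(A)$, the terms involving $\deg_{C_0}(M)$, $\deg(M)$, $\deg_v(\omega_X)$ rearrange — by the very definition \eqref{Eqn: SlopeToPhi} of $\phi(A,M)$ — into exactly $\sum_{v\in\operatorname{Vert}(\Gamma_0)}\phi(A,M)(v)$, while the combinatorial $\chi(\mathcal{O}_{C_0})$-contribution reassembles into the term $-\#(\Gamma_0\cap\Gamma_0^c)/2$ appearing on the right-hand side of \eqref{Eqn: DefOfStability}. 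The slope inequality for $G=F\otimes M$ with respect to $A$ therefore becomes term-by-term the $\phi(A,M)$-semistability inequality \eqref{Eqn: DefOfStability} for $F$, and since $A$ is ample the multiplication by $\deg_{C_0}(A)>0$ preserves the direction of the inequality and the strict-versus-nonstrict distinction, giving the stable case simultaneously. Because the equivalence holds subgraph-by-subgraph for all proper $\Gamma_0$, it holds for the two (semi)stability notions.

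The only genuine subtlety — and the step I would be most careful about — is the treatment of the non-locally-free locus of $F$ (equivalently $G$) at the nodes in $\Gamma_0\cap\Gamma_0^c$: the identity $\chi(G_{C_0}) = \deg_{C_0}(G)+\chi(\mathcal{O}_{C_0})$ is for the \emph{quotient} $G_{C_0}$, and when $G$ fails to be locally free at a node separating $\Gamma_0$ from its complement the relation between $\chi(G)$ and $\chi(G_{C_0})+\chi(G_{C_0^c})$ picks up a correction by $\delta_{\Gamma_0}(G)$, the number of such non-free nodes — precisely the quantity appearing in the second Remark of the excerpt, equations \eqref{Eqn: AltDefOfStability} and \eqref{Eqn: SymDefOfStability}. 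I would handle this by checking that the two formulations \eqref{Eqn: DefOfStability} and \eqref{Eqn: AltDefOfStability} of $\phi$-semistability are consistent with the two ways of writing the slope inequality (applied to $C_0$ versus to $C_0^c$), so that the $\delta_{\Gamma_0}$ corrections cancel as in the derivation of \eqref{Eqn: SymDefOfStability}; equivalently, one may first reduce to the case where $F$ is locally free (the bound is an equality or strict inequality that only improves when passing to the torsion-free quotient) and then note that both sides of the claimed equivalence are insensitive to this reduction. Once this point is dispatched, the proof is just the substitution described above.
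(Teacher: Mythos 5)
Your proposal is correct and is essentially the paper's own argument: the paper's proof consists of citing the explicit reformulation of slope (semi)stability from Alexeev and Kass and declaring the rest ``elementary algebra,'' and your substitution of $G=F\otimes M$ into $\chi(G_{C_0})=\deg_{C_0}(G)+\chi(\mathcal{O}_{C_0})$, together with the identity $-\chi(\mathcal{O}_{C_0})=\sum_{v\in\Gamma_0}\tfrac{1}{2}\deg_v(\omega_C)-\tfrac{1}{2}\#(\Gamma_0\cap\Gamma_0^c)$, is precisely that algebra. Your closing remark on the $\delta_{\Gamma_0}$ correction and the $C_0$-versus-$C_0^c$ symmetry correctly disposes of the only subtle point (which also resolves the direction of the slope inequality you chose), so nothing is missing.
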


Motivated by the lemma, we make the following definition.
\begin{definition}
	We define the \textbf{canonical parameter} $\phi_{\text{can}} \in V(\Gamma)$ of a stable pointed curve $C$ with dual graph $\Gamma$ by setting $\phi_{\text{can}}(v) = \frac{\deg_{v}\omega_{C}}{2}$ for all $v \in \operatorname{Vert}(\Gamma)$.
\end{definition}
Concretely $\phi_{\text{can}}(v) =  g(v)-1+\#\operatorname{Edge}(N_{v})/2$ where $N_{v} \subset \Gamma$ is the neighbourhood of $v$.   A sheaf $F$ is $\phi_{\text{can}}$-semistable if and only if it is slope semistable with respect to an ample line bundle, i.e.~$\phi_{\text{can}}=\phi(A, \mathcal{O}_{C})$ for some (equivalently all) ample $A$.

Next we define the stability space that controls families $\Jb_{g,n}(\phi)$ over $\Mm{g}{n}^{\text{TL}}$.  Recall that in the last paragraph of Section \ref{graphs} we have fixed a finite set $\mathcal{G}_{g,n}^{\text{TL}}$ of treelike graphs, one for each isomorphism class.
\begin{definition} \label{Def: StabilityInFamilies}

	Suppose that $c \colon \Gamma_1 \to \Gamma_2$ is a contraction of stable marked  graphs.  We say that $\phi_1 \in V(\Gamma_1)$ is \textbf{compatible} with $\phi_2 \in V(\Gamma_2)$ with respect to $c$  if
	\begin{equation} \label{Eqn: CompatibilityRelation}
		\phi_2(v_2) = \sum \limits_{c(v_1)=v_2} \phi_1(v_1)
	\end{equation}
	for all vertices $v_2 \in \operatorname{Vert}(\Gamma_2)$.
	
 The \textbf{stability space} $V_{g,n}^{\text{TL}} $ is defined to be the subset
	\[
		V_{g,n}^{\text{TL}} \subset \prod_{\Gamma \in \mathcal{G}_{g,n}^{\text{TL}}} V(\Gamma)
	\]
	consisting of $\phi \in  \prod_{\Gamma \in \mathcal{G}_{g,n}^{\text{TL}}} V(\Gamma)$ such that for all $\Gamma_1, \Gamma_2 \in \mathcal{G}_{g,n}^{\text{TL}}$ and all contractions $c \colon \Gamma_1 \to \Gamma_2$, the component $\phi(\Gamma_1)$ (of $\phi$ along $\Gamma_1$) is compatible with the component $\phi(\Gamma_2)$ with respect to $c$.
	
	Given a stability parameter $\phi \in V_{g,n}^{\text{TL}}$ we say that a  rank~$1$, torsion-free sheaf $F$ of degree $g-1$ on a stable pointed  curve $(C, p_1, \dots, p_n) \in \Mm{g}{n}^{\text{TL}}$ is \textbf{$\phi$-semistable} (resp.~\textbf{$\phi$-stable})  if $F$ is $\phi(\Gamma)$-semistable (resp.~$\phi(\Gamma)$-stable) for $\Gamma$ the dual graph of $C$.  We say that $\phi \in V_{g,n}^{\text{TL}}$ is \textbf{nondegenerate} if the component $\phi(\Gamma)$ is nondegenerate for all $\Gamma \in \mathcal{G}_{g,n}^{\text{TL}}$.

The \textbf{canonical parameter} $\phi_{\text{can}} \in V_{g,n}^{\text{TL}}$ is defined to be $\phi_{\text{can}} := (\phi_{\text{can}}(\Gamma))_{\Gamma \in \mathcal{G}_{g,n}^{\text{TL}}}$.
\end{definition}

In addition to compatibility with contractions, a natural condition to impose on $\phi \in \prod_{\Gamma \in \mathcal{G}_{g,n}^{\text{TL}}}$ would be invariance under automorphism. We will see in Remark \ref{Remark: InvUnderAut} that, for treelike graphs, this further assumption automatically follows from compatibility with all contractions.

\begin{remark}
	In Definition~\ref{Def: StabilityInFamilies} we defined $V_{g,n}^{\text{TL}}$ to be the subset of $\phi$'s that are compatible with contractions in order to ensure that there is a well-behaved moduli stack $\Jb_{g,n}(\phi)$ associated to a nondegenerate stability parameter $\phi$ (the existence of $\Jb_{g,n}(\phi)$ is Corollary~\ref{Cor: JbExists} below).  Without the compatibility condition, a suitable moduli stack may not exist.  The essential point is this.  Suppose $L_{\eta}$ is a line bundle on a stable pointed curve $C_{\eta}$ that specializes to $L_s$ on $C_s$ within some $1$-parameter family.  If $C_{0, \eta}$ is an irreducible component of $C_{\eta}$, then that irreducible component specializes to a subcurve $C_{0, s}$, and the degrees are related by
	\begin{equation} \label{Eqn: CompatibleDegree}
		\deg_{C_{0, \eta}}(L_{\eta}) = \deg_{C_{0, s}}(L_{s})
	\end{equation}
	(by continuity of the Euler characteristic).
	
	Equation~\eqref{Eqn: CompatibleDegree} is exactly the condition that the degree vector ${\deg}(L)$ is compatible with the contraction $c \colon \Gamma_{C_s} \to \Gamma_{C_{\eta}}$.  Thus when defining stability conditions on line bundles, it is natural to require that the degree vectors of stable line bundles are compatible with contractions, and this holds when the line bundles are the $\phi$-stable line bundles for a stability parameter $\phi$ that is compatible with contractions.
\end{remark}

We conclude the section by proving that a nondegenerate stability parameter $\phi \in V_{g,n}^{\text{TL}}$ is determined by its components $\phi(\Gamma(i,S))$ for all indices $(i,S)$ as in  Notation \ref{Definition: AdmissiblePair}.
\begin{lemma} \label{Lemma: PhiDeterminedByTwoComponents}
	The restriction of the natural projection
	\begin{equation} \label{Eqn: Projection}
		 \prod_{\Gamma \in \mathcal{G}_{g,n}^{\text{TL}}} V(\Gamma) \to   \prod_{(i,S)} V(\Gamma(i,S))
	\end{equation}
	to $V_{g,n}^{\text{TL}}$ is a bijection.
\end{lemma}
\begin{proof}
(For clarity, we only prove this in the special case where either $n \ge 1$ or $g$ is odd, but we indicate in Remark \ref{messycase} how to extend the argument to the case where $n=0$ and $g$ is even.)

First we observe that if $\Gamma$ has loops, we can apply compatibility with contractions and uniquely determine $\phi(\Gamma)$ from $\phi(\overline{\Gamma})$, where $\overline{\Gamma}$ is the unique graph in $\mathcal{G}_{g,n}^{\text{TL}}$ obtained by subsequently contracting all loops of $\Gamma$.  Therefore we can assume that $\Gamma$ is a tree with at least two edges.

We now prove injectivity.  We start by recalling that because $\phi(\Gamma)$ belongs to $V(\Gamma)$, it satisfies the equation
\begin{equation} \label{Eqn: SecondCompatibilityEquality}
\sum\limits_{v \in \operatorname{Vert}(\Gamma)} \phi(\Gamma)(v)=g-1.
\end{equation}
If $e \in \operatorname{Edge}(\Gamma)$ is  any edge, then $\Gamma - e$ has two connected components, say $\Gamma^{+}$ and $\Gamma^{-}$.  If $c_e$ is a composition of contractions that contracts all edges of $\Gamma$ except for $e$, then $\Gamma^{+}$ and $\Gamma^{-}$ are contracted to two distinct vertices, say $v^{+}$ and $v^{-}$ respectively.  The vector $\phi$ is compatible with $c_e$ if and only if the following equality is satisfied:
	\begin{equation} \label{Eqn: FirstCompatibilityEquality}
		\sum\limits_{v \in \operatorname{Vert}(\Gamma^{+})} \phi(\Gamma)(v) = \phi(c_e(\Gamma))(v^{+}).\end{equation}
(Note that for each $e \in \operatorname{Edge}(\Gamma)$ the equation above depends on the choice of an orientation of $e$, but the two resulting equations are equivalent because of Equation \eqref{Eqn: SecondCompatibilityEquality}).
	
Varying over all $e \in \operatorname{Edge}(\Gamma)$, the equations in \eqref{Eqn: FirstCompatibilityEquality} together with Equation \eqref{Eqn: SecondCompatibilityEquality} form a system of  $\#\operatorname{Edge}({\Gamma})+1 = \#\operatorname{Vert}(\Gamma)$ inhomogeneous equations in $\# \operatorname{Vert}(\Gamma)$ variables.

We claim that the associated system of homogeneous equations is nondegenerate. If $w$ is a leaf and $e_w$ is the unique edge that connects it to the rest of $\Gamma$, the unknown variables $\phi(\Gamma)(v)$ on the left-hand side of Equation \eqref{Eqn: FirstCompatibilityEquality}  all appear with a coefficient $0$, with the exception of $\phi(\Gamma)(w)$ which appears with a coefficient $1$. The claim then follows by induction on $\#\operatorname{Edge}(\Gamma)$ (the case $\#\operatorname{Edge}(\Gamma)=1$ is immediate). It follows that the projection \eqref{Eqn: Projection} restricted to $V_{g,n}^{\text{TL}}$ is injective.
	
	We now establish surjectivity.  Given
	\[
		\psi \in  \prod_{(i,S)} V(\Gamma(i,S)),
	\]
	we define $\phi$ as follows. If $\Gamma$ is a tree with at least two edges, we let $\phi(\Gamma)$ be the unique solution to the system of Equations  \eqref{Eqn: FirstCompatibilityEquality} (varying over all $e \in \operatorname{Edge}(\Gamma)$) together with  Equation \eqref{Eqn: SecondCompatibilityEquality}.

	To prove surjectivity it is enough to prove that the element $\phi \in  \prod_{\Gamma \in \mathcal{G}_{g,n}^{\text{TL}}} V(\Gamma)$ defined in the previous paragraph is compatible with all contractions.  Indeed, given any contraction $c \colon \Gamma_1 \to \Gamma_2$, define $\phi'(\Gamma_2) \in V(\Gamma_2)$ by setting
	\[
		\phi'(\Gamma_2)(v_2) = \sum \limits_{c(v_1)=v_2} \phi(\Gamma_1)(v_2).
	\]
	Then both $\phi'(\Gamma_2)$ and $\phi(\Gamma_2)$ satisfy the system of equations  \eqref{Eqn: FirstCompatibilityEquality} and  \eqref{Eqn: SecondCompatibilityEquality}, and again because that system is nondegenerate we deduce $\phi'(\Gamma_2)=\phi(\Gamma_2)$, proving that $\phi$	is compatible with $c \colon \Gamma_1 \to \Gamma_2$.
\end{proof}

\begin{remark} \label{messycase} In the proof of Lemma \ref{Lemma: PhiDeterminedByTwoComponents} the cases $g$ even and $n=0$ are special  because of the presence of a stable graph $\Gamma(g/2, \emptyset)$, with two vertices and one edge, that admits a nontrivial involution $\alpha$.

We claim that compatibility with contractions forces the equality \[\phi(\Gamma(g/2, \emptyset))= \left(\frac{g-1}{2}, \frac{g-1}{2}\right).\] Indeed, consider the graph $\Gamma'$ with two vertices of genus $\frac{g}{2}-1$ and $\frac{g}{2}$ respectively, one loop on the first edge and one edge connecting the two vertices. Applying the two different contractions $\Gamma' \to \Gamma(g/2, \emptyset)$, we deduce that the two components of $\phi(\Gamma(g/2, \emptyset))$ must be equal, and the claim follows from the fact that their sum equals $g-1$.

To complete the proof of Lemma \ref{Lemma: PhiDeterminedByTwoComponents} one needs to take extra care of the fact that if $c_e \colon \Gamma \to \Gamma(g/2, \emptyset)$ is a composition of contractions, then there is a \emph{different} composition of contractions $\alpha \circ c_e$, and the two corresponding equations \eqref{Eqn: FirstCompatibilityEquality} are equal modulo \eqref{Eqn: SecondCompatibilityEquality}. Therefore the assertion in Lemma \ref{Lemma: PhiDeterminedByTwoComponents} that the given system of equations is nondegenerate remains valid if one considers only one of the two equations arising from $c_e$ and $\alpha \circ c_e$.
\end{remark}

\begin{remark}\label{Remark: InvUnderAut} From the proof of Lemma \ref{Lemma: PhiDeterminedByTwoComponents} we can also deduce that, for $\phi \in  \prod_{\Gamma \in \mathcal{G}_{g,n}^{\text{TL}}} V(\Gamma)$, compatibility with contractions implies invariance under automorphisms i.e.~for all graphs $\Gamma\in \mathcal{G}_{g,n}^{\text{TL}}$ and all graph automorphisms $\alpha \colon \Gamma \to \Gamma$, we have $\phi(\Gamma)(v)=\phi(\Gamma)(\alpha(v))$.

We prove this assuming $n>1$ or $g$ odd (the remaining cases should be treated with extra care as in Remark \ref{messycase}). Let $\Gamma\in \mathcal{G}_{g,n}^{\text{TL}}$ and $\alpha \colon \Gamma \to \Gamma$ be an automorphism. By subsequently contracting all loops (if any), we can assume that $\Gamma$ is a tree with at least two edges.    If $c_e$ is a composition of contractions that contracts all edges of $\Gamma$ except for $e$, then so is $c_e \circ \alpha$. Equation \eqref{Eqn: FirstCompatibilityEquality} for $c_e \circ \alpha$ corresponds to the equation for $c_e$ by the permutation of unknowns $\phi(\Gamma)(v) \to \phi(\Gamma)(\alpha(v))$ (when forming Equation \eqref{Eqn: FirstCompatibilityEquality}  for $c_e$ and for $c_e \circ \alpha$ one has to choose the corresponding orientation for $e$).  Because the system of Equations \eqref{Eqn: SecondCompatibilityEquality} and \eqref{Eqn: FirstCompatibilityEquality} is nondegenerate, its unique solution satisfies $\phi(\Gamma)(v) = \phi(\Gamma)(\alpha(v))$.
\end{remark}

\subsection {The stability polytope decomposition}  \label{Subsection: Combo}
Here we define the polytope  decompositions of $V(\Gamma)$ and of $V_{g,n}^{\text{TL}}$ that describe how $\phi$-stability depends on $\phi$.

We will use the following definition and lemma to construct the decomposition.

\begin{definition}
	A subgraph $\Gamma_0 \subset \Gamma$ is said to be \textbf{elementary} if both $\Gamma_0$ and its complement $\Gamma_0^c$ are connected.
\end{definition}

\begin{remark}
	The vertex set of an elementary subgraph is  an elementary cut in the sense of \cite[page~31]{oda79}.
\end{remark}

\begin{remark}
	When $\Gamma \in \mathcal{G}_{g,n}^{\text{TL}}$ (the case of present interest), a subgraph $\Gamma_0 \subset \Gamma$ is elementary if and only if $\Gamma_0 \cap \Gamma_0^c$ consists of one edge.
\end{remark}

\begin{lemma} \label{Lemma: ElementaryFindsUnstable}
	Let $(C, p_1, \dots, p_n)$ be a stable pointed curve and $\phi \in V(\Gamma_{C})$.  A rank~$1$, torsion-free sheaf $F$ of degree $g-1$ is $\phi$-semistable (resp.~$\phi$-stable) if and only if Inequality~\eqref{Eqn: DefOfStability} holds for all elementary subgraphs of $\Gamma_{C}$.
\end{lemma}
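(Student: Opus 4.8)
The plan is to show that the "hard direction" — that verifying Inequality~\eqref{Eqn: DefOfStability} on elementary subgraphs suffices — reduces to the following claim: for an arbitrary proper subgraph $\Gamma_0 \subset \Gamma_C$, the stability inequality for $\Gamma_0$ follows from the stability inequalities for certain elementary subgraphs built from the connected components of $\Gamma_0$ and of $\Gamma_0^c$. The forward implication is trivial (an elementary subgraph is in particular a proper subgraph), so all the content is in the converse, and I would prove it by an additivity/induction argument on the number of connected components of $\Gamma_0 \sqcup \Gamma_0^c$.

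The key steps, in order, would be: \textbf{(1)} Reduce to the case where $\Gamma_0$ itself is connected. If $\Gamma_0 = \Gamma_0' \sqcup \Gamma_0''$ with $\Gamma_0', \Gamma_0''$ nonempty and disjoint (no edges between them since they are distinct connected components), then $\deg_{\Gamma_0}(F) = \deg_{\Gamma_0'}(F) + \deg_{\Gamma_0''}(F)$, the vertex sums add, and the edge-count satisfies $\#(\Gamma_0 \cap \Gamma_0^c) = \#(\Gamma_0' \cap (\Gamma_0')^c) + \#(\Gamma_0'' \cap (\Gamma_0'')^c) - 2\cdot\#(\text{edges between }\Gamma_0'\text{ and }\Gamma_0'')$; since there are no such edges, the inequality for $\Gamma_0$ is the sum of the inequalities for $\Gamma_0'$ and $\Gamma_0''$. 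Here one must check that $\Gamma_0'$ and $\Gamma_0''$ are still proper subgraphs, which they are. This lets us assume $\Gamma_0$ is connected. \textbf{(2)} Now assume $\Gamma_0$ is connected but $\Gamma_0^c$ is disconnected, say $\Gamma_0^c = \Gamma_1 \sqcup \cdots \sqcup \Gamma_r$ into connected components with $r \ge 2$. Each $\Gamma_i^c = \Gamma_0 \cup \bigcup_{j \ne i}\Gamma_j$ is connected (it contains the connected $\Gamma_0$ and each $\Gamma_j$ is attached to $\Gamma_0$ — here I use that $\Gamma_C$ is connected, so every component of $\Gamma_0^c$ shares an edge with $\Gamma_0$), so each $\Gamma_i$ is elementary. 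Summing Inequality~\eqref{Eqn: AltDefOfStability} (the upper-bound form) over $i = 1, \dots, r$ for the elementary subgraphs $\Gamma_i$ and using again additivity of degree, of the $\phi$-sum, of $\delta$, and of the edge-counts (no edges run between distinct $\Gamma_i$'s), one recovers exactly Inequality~\eqref{Eqn: AltDefOfStability} for $\Gamma_0^c$, equivalently Inequality~\eqref{Eqn: DefOfStability} for $\Gamma_0$. \textbf{(3)} The remaining case is $\Gamma_0$ and $\Gamma_0^c$ both connected, i.e.\ $\Gamma_0$ already elementary — nothing to prove. For the stable (strict) version, the same additivity arguments show that if at least one summand inequality is strict then so is the conclusion, which is automatic since each sum has at least one term.

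The main obstacle I anticipate is the careful bookkeeping in step \textbf{(2)}: one must make sure the counts $\#(\Gamma_i \cap \Gamma_i^c)$ genuinely add up to $\#(\Gamma_0^c \cap \Gamma_0)$ with no double-counting and no leftover edges internal to $\Gamma_0^c$, and that the non-local-freeness counts $\delta_{\Gamma_i}(F)$ behave additively as well. The hypothesis $b_1(\overline{\Gamma}_C) = 0$ is what makes this clean — it guarantees (via the last remark before the lemma) that elementary subgraphs meet their complements in a single edge, and more importantly it rules out cycles among the components $\Gamma_0, \Gamma_1, \dots, \Gamma_r$ that would otherwise create extra edges between the $\Gamma_i$. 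I would state explicitly that all of these additivity identities are consequences of the fact that the dual graph is a tree after contracting loops, and that the maximal-torsion-free-quotient degree and the $\delta$-invariant localize to edges, so that the verification is a finite, elementary check rather than anything requiring new ideas.
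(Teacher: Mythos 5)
Your proposal is correct and follows essentially the same route as the paper's proof: decompose into connected components, observe that each connected component of $\Gamma_0$ (or of $\Gamma_0^c$) is elementary because the ambient graph is connected, and sum the resulting inequalities — the paper merely packages the two one-sided bounds \eqref{Eqn: DefOfStability} and \eqref{Eqn: AltDefOfStability} into the symmetric form \eqref{Eqn: SymDefOfStability} and applies the triangle inequality where you sum the one-sided inequalities and pass to complements. One small correction: the lemma does not assume $b_1(\overline{\Gamma}_C)=0$ and your argument nowhere needs it, since the absence of edges between distinct connected components of an induced subgraph is automatic and has nothing to do with the tree condition.
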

\begin{proof}
	Set $\Gamma := \Gamma_{C}$.  It is enough to show that if $F$ satisfies  Inequality \eqref{Eqn: SymDefOfStability} for all elementary subgraphs $\Gamma_0 \subset \Gamma$,  then it satisfies the inequality for \emph{all} subgraphs.  First, consider the case where $\Gamma_{0}^{c}$ is connected.  Let $\Gamma_1, \dots, \Gamma_n$ be the connected components of $\Gamma_0$.
	
	Each of the connected components $\Gamma_1, \dots, \Gamma_n$ is an elementary subgraph of $\Gamma$.  Indeed, the complement of $\Gamma_i$ in $\Gamma$ is
	\[
		\Gamma_{i}^{c} = \Gamma_{0}^{c} \cup \Gamma_{1} \cup \Gamma_{2} \dots \cup \Gamma_{i-1} \cup \Gamma_{i+1} \cup \dots \cup \Gamma_{n},
	\]
	and we can connect each  $\Gamma_{j}$ for $j \ne i$ to $\Gamma_{0}^{c}$ as follows.  Since $\Gamma$ is connected, for $j \ne i$, we can connect any vertex in $\Gamma_{j}$ to any vertex in $\Gamma_0^{c}$ by a path in $\Gamma$.  Pick one such path $v_0, v_1, \dots, v_n$ that has minimal length.  There is no consecutive pair $v_{i}, v_{i+1}$ of vertices with  $v_i \in \operatorname{Vert}(\Gamma_{k_{1}})$ and $v_{i+1} \in \operatorname{Vert}(\Gamma_{k_2})$ for distinct $k_1, k_2$ because no edge joins $\Gamma_{k_1}$ to $\Gamma_{k_2}$.  Furthermore, the first vertex that lies in  $\Gamma_{0}^{c}$ must be $v_n$ by minimality, so the vertices $v_1, \dots, v_{n-1}$ must all lie in $\Gamma_j$.  This proves that $\Gamma_i$ is elementary.
	
	By hypothesis, Inequality~\eqref{Eqn: SymDefOfStability} holds for the subgraphs $\Gamma_1, \dots, \Gamma_n$ and combining those inequalities with the triangle inequality, we  get Inequality~\eqref{Eqn: SymDefOfStability}  for $\Gamma_0$.  This proves the lemma under the assumption that $\Gamma_0^{c}$ is connected.
	
	For arbitrary $\Gamma$ we argue as follows.  Inequality \eqref{Eqn: SymDefOfStability} is symmetric with respect to replacing $\Gamma_0$ with $\Gamma_0^{c}$, so the result follows immediately when $\Gamma_0$ is connected. When $\Gamma_0$ is not connected, the result follows by expressing $\Gamma_0$ as a union of connected components and applying the triangle inequality.
\end{proof}

\begin{definition} \label{Definition: PolytopesOneCurve}
	Let $\Gamma$ be a stable marked graph of genus $g$.  To a subgraph $\Gamma_0 \subset \Gamma$ and an integer $d \in \mathbb{Z}$ we associate the affine linear function  $\ell(\Gamma_0, d) \colon V(\Gamma) \to \mathbb{R}$ defined by
	\[
		\ell(\Gamma_0, d)(\phi)  := d- \sum_{v \in \operatorname{Vert}(\Gamma_0)}\phi(v) + \frac{\#(\Gamma_0 \cap \Gamma_{0}^{c})}{2}.
	\]

If $\Gamma_0 \subset \Gamma$ is an elementary subgraph, we call
	\[
		 H(\Gamma_0, d) := \{ \phi \in V(\Gamma) \colon  \ell(\Gamma_0, d)(\phi) =0 \}
	\]
	a \textbf{stability hyperplane}. A connected component  of the complement of all stability hyperplanes in $V(\Gamma)$
	\[
		V(\Gamma) - \bigcup\limits_{\substack{\Gamma_0 \subset \Gamma \text{ elementary}\\ d \in \mathbb{Z}}}  H(\Gamma_0, d)
	\]
	is defined to be a \textbf{stability polytope}, and the set of all stability polytopes is defined to be the \textbf{stability polytope decomposition} of $V(\Gamma)$.
	
	Given a stability parameter $\phi \in V(\Gamma)$, we denote the unique stability polytope containing $\phi$ by $\mathcal{P}(\phi)$.
\end{definition}
By definition, if $\phi_0$ is a nondegenerate stability parameter,  then
\begin{equation} \label{Eqn: StabilityPolytope}
	\mathcal{P}(\phi_0) = \{ \phi \in V(\Gamma) \colon \ell(\Gamma_0, d)(\phi)>0 \text{ for all } \ell(\Gamma_0, d) \text{ s.t.~}\ell(\Gamma_0, d)(\phi_0) > 0 \}.
\end{equation}

The stability polytope  $\mathcal{P}(\phi_0)$ is a rational bounded convex polytope because in Equation~\eqref{Eqn: StabilityPolytope}  only finitely many $\ell(\Gamma_0, d)$'s are needed to define $\mathcal{P}(\phi_0)$.

\begin{example}
	When  $\Gamma= \Gamma_C$ has one vertex (i.e.~when $C$ is irreducible), $V(\Gamma)$ is a $0$-dimensional affine space.  There are no elementary subgraphs of $\Gamma$, so there is only one stability polytope: $V(\Gamma)$ itself.
\end{example}

\begin{example} \label{Example: TwoComponentExample}
	Suppose that $\Gamma$ is the graph depicted in Figure~\ref{Figure: TwoVertexGraph}.
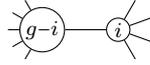
\begin{figure} \begin{tikzpicture}[baseline]
      \path(0,0) ellipse (2 and 1);
      \tikzstyle{level 1}=[counterclockwise from=-60,level distance=9mm,sibling angle=40]
      \node (A0) at (0:1) {$\scriptstyle{i}$} child child child child;
      \tikzstyle{level 1}=[counterclockwise from=120,level distance=9mm,sibling angle=30]
      \node (A1) at (180:1) {$\scriptstyle{g-i}$} child child child child child;

      \path (A0) edge [bend left=0.000000] (A1);
    \end{tikzpicture}
    \caption{A stable graph $\Gamma(i,S)$ with two vertices and one edge.}
    \label{Figure: TwoVertexGraph}\end{figure}
The associated stability polytopes are depicted in Figure~\ref{Figure: StabilityPolytopes}.  The affine space $V(\Gamma)$ is $1$-dimensional, and a stability polytope  is an open line segment with endpoints at two consecutive half-integer points.  More precisely, if $\vec{d}=(d(v_1), d(v_2)) \in V_{\mathbb{Z}}(\Gamma)$ is an integral vector, then the set of solutions to
	\begin{align*}
		d(v_1) - \phi(v_1)+1/2 &>0, \\
		d(v_2) - \phi(v_2)+1/2&>0
	\end{align*}
	is a stability polytope $\mathcal{P}(d)$ that can be described as the relative interior of the convex hull of $(d(v_1)-1/2, d(v_2)+1/2)$ and $(d(v_1)+1/2, d(v_2)-1/2)$, and every stability polytope  can be written as $\mathcal{P}(\vec{d})$ for a unique $\vec{d}$.
	
\begin{figure}[h]
    \centering
    \includegraphics[scale=0.4]{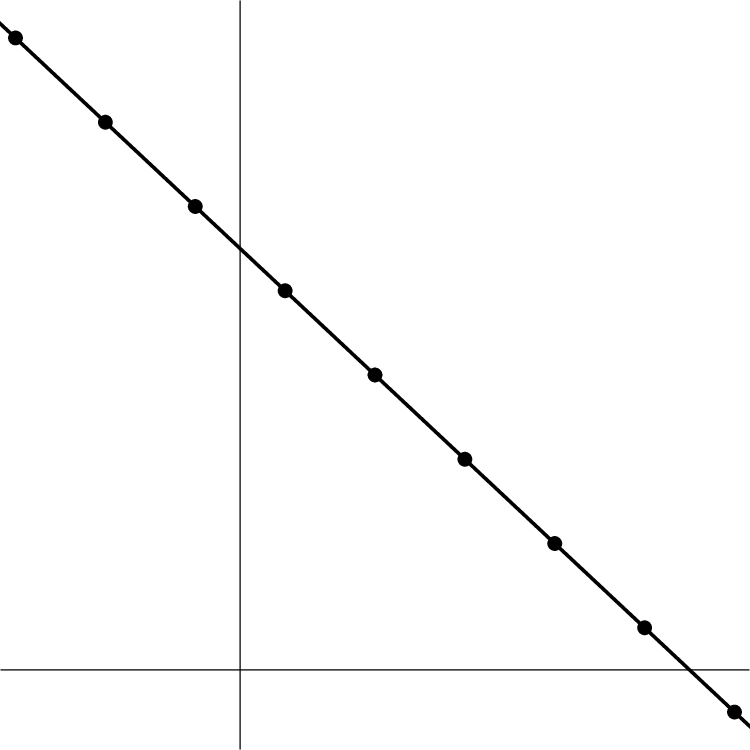}
    \caption{The stability polytopes of a two-vertex graph $\Gamma$}
    \label{Figure: StabilityPolytopes}
\end{figure}
\end{example}

One property of the curve whose dual graph is depicted in Figure~\ref{Figure: TwoVertexGraph} is that, for every  nondegenerate stability parameter $\phi$, there is a unique $\phi$-stable multidegree.  This is  true more generally  for treelike curves.  This fact is a consequence of the more general result, proven in \cite[Theorem~7.7]{oda79} for degree $d=0$ and in \cite[Theorem~4.1]{melo12} for arbitrary $d$,  that for an arbitrary nodal curve $C$ and a nondegenerate stability parameter $\phi$, the set of $\phi$-stable multidegrees of line bundles has cardinality equal to the complexity of the dual graph.   Those results imply the following lemma, but we give a self-contained proof.

\begin{lemma}	\label{Lemma: ComponentGroup}
Let $(C, p_1, \dots, p_n)$ be a stable curve, $\Gamma$ a spanning tree of the dual graph $\Gamma_{C}$, and  $\phi \in V(\Gamma_C)$ a nondegenerate stability parameter.  Then there exists a $\phi$-stable sheaf on $C$ that fails to be locally free at the nodes not in $\Gamma$. Furthermore, the multidegree of this sheaf is unique.
\end{lemma}
\begin{proof}	
	Endow  $\Gamma$ with the structure of a rooted tree by arbitrarily picking a vertex $r_0 \in \operatorname{Vert}(\Gamma)$ as the root.  We prove the lemma by working one vertex at a time, starting with the leaves and ending with the root.  Suppose $v_0 \in \operatorname{Vert}(\Gamma)$ is a vertex.  Define $\Gamma_0$ to be the complete subgraph of the dual graph $\Gamma_{C}$ with vertex set consisting of  $v_0$ and its descendants (in the rooted spanning tree).  For a sheaf $F$ that fails to be locally free at every edge not in $\Gamma$, the $\phi$-stability inequality \eqref{Eqn: SymDefOfStability} for $\Gamma_{0}$ takes the form
		\begin{equation} \label{Eqn: StabilityInequalityTree}
		\left| \deg_{\Gamma_0}(F)-\sum_{v \in \operatorname{Vert}(\Gamma_0)} \phi(v) + \frac{\delta_{\Gamma_0}(F)}{2}\right| < \frac{1}{2}.
	\end{equation}
	This proves that the partial degree $\deg_{\Gamma_0}(F)$ of a $\phi$-stable sheaf is uniquely determined (and equal to the integer nearest to $\sum_{v \in \operatorname{Vert}(\Gamma_0)} \phi(v)+\delta_{\Gamma_0}(F)/2$).  Reverse induction on the depth of $v_0$ shows that $\deg_{v_0}(F)$ is also uniquely determined.  Indeed, the inductive hypothesis then implies $\deg_{\Gamma_0 \setminus \{ v_0 \}}(F)$ is uniquely determined and
	\begin{equation} \label{Eqn: InductionFormula}
		\deg_{\Gamma_0}(F) = \deg_{v_0}(F)+ \deg_{\Gamma_0 \setminus \{ v_0\}}(F)+\#\{\text{edges from $v_0$ to $\Gamma_0 \setminus \{ v_0 \}$ not in $\Gamma_0$}\}.
	\end{equation}
	
	For existence, we observe that any rank~$1$, torsion-free sheaf $F$ that fails to be locally free at the nodes not in $\Gamma$ and satisfies \eqref{Eqn: InductionFormula} is $\phi$-stable.  (Such sheaves exist.  Take, for example,  $F$ to be the direct image of a line bundle of suitable multidegree under the partial normalization of $C$ given by resolving the nodes not in $\Gamma$.)
	
	By the construction of $F$, the $\phi$-stability inequality  \eqref{Eqn: StabilityInequalityTree} is satisfied when $\Gamma_0$ is the subgraph of $\Gamma_C$ that contains $v_0$ and its descendants in $\Gamma$, and the same is true of the complement of $\Gamma_0$ by symmetry.   We observe that these subgraphs are the subgraphs of $\Gamma_C$ obtained as  complements of elementary subgraphs of $\Gamma$.

	We  deduce that the stability inequality holds for all subgraphs of $\Gamma_C$  as follows.  Given an arbitrary subgraph $\Gamma_0$ of $\Gamma_C$, we decompose its vertex set as \[\operatorname{Vert}(\Gamma_0) = \operatorname{Vert}(\Gamma_1) \sqcup \dots \sqcup \operatorname{Vert}(\Gamma_m)\] for subgraphs $\Gamma_1, \dots, \Gamma_m$ of $\Gamma_C$ of the type discussed in the above paragraph (vertex set equal to the vertex set of an elementary subgraph of $\Gamma$), and with the property that no edge of $\Gamma$ connects $\Gamma_i$ to $\Gamma_j$ for $i \ne j$ for $i=1, \ldots, m$.  (To decompose, consider the connected components of the subgraph of $\Gamma$ with vertex set  $\operatorname{Vert}(\Gamma_0)$.)
	
	The stability inequality for $F$ on $\Gamma_1, \dots, \Gamma_m$ implies the stability inequality for $F$ on $\Gamma_0$.  Indeed, since $F$ fails to be locally free at every edge connecting $\Gamma_i$ to $\Gamma_j$ for $i \ne j$, we have
	\[
		\#(\Gamma_0 \cap \Gamma_0^{c})- \delta_{\Gamma_0}(F) = \#(\Gamma_1 \cap \Gamma_1^{c})- \delta_{\Gamma_1}(F)+ \dots +\#(\Gamma_m \cap \Gamma_m^{c})- \delta_{\Gamma_m}(F)=m.
	\]
	We deduce the stability inequality for $F$ on $\Gamma_0$ from the inequalities for $F$ on $\Gamma_1, \dots, \Gamma_m$ using the triangle inequality:
\begin{align*}
\left| \deg_{\Gamma_0}(F) - \sum_{v \in \operatorname{Vert}(\Gamma_0)} \phi(v) + \frac{\delta_{\Gamma_0}(F)}{2} \right|
	=	& \left| \sum_{i=1}^m \left( \deg_{\Gamma_i}(F) - \sum_{v \in \operatorname{Vert}(\Gamma_i)} \phi(v) + \frac{\delta_{\Gamma_i}(F)}{2} \right) \right| \\
	 \leq &     \sum_{i=1}^m \left| \deg_{\Gamma_i}(F) - \sum_{v \in \operatorname{Vert}(\Gamma_i)} \phi(v) + \frac{\delta_{\Gamma_i}(F)}{2} \right| \\
	 <&  \sum_{i=1}^m \frac{ \#(\Gamma_i \cap \Gamma_i^{c})- \delta_{\Gamma_i}(F)}{2} \\
	 =&  \#(\Gamma_0 \cap \Gamma_0^{c})- \delta_{\Gamma_0}(F).
\end{align*}
This concludes our proof of Lemma~\ref{Lemma: ComponentGroup}.
\end{proof}

We now apply Lemma~\ref{Lemma: ComponentGroup} to prove that the stability polytopes from Definition~\ref{Definition: PolytopesOneCurve} witness the variation of $\phi$-stability.

\begin{lemma} \label{Lemma: StabilityPolytope}
	Let $\phi_1, \phi_2 \in V(C)$ with $\phi_1$ nondegenerate.  Then  every $\phi_1$-stable sheaf is $\phi_2$-semistable if and only if $\phi_2 \in \overline{\mathcal{P}}(\phi_{1})$ (the closure of the unique polytope containing $\phi_1$; see Definition \ref{Definition: PolytopesOneCurve}).
\end{lemma}
\begin{proof}
If $\phi_2 \in \overline{\mathcal{P}}(\phi_{1})$, then by definition
\[
	\ell(\Gamma_0, d)(\phi_2) \ge 0 \text{ for all } \ell(\Gamma_0, d) \text{ s.t.~}\ell(\Gamma_0, d)(\phi_1) > 0.
\]
In other words, if $F$ is a $\phi_1$-stable sheaf, then $F$ satisfies the $\phi_2$-semistability inequality for all elementary subgraphs $\Gamma_0$ of the dual graph $\Gamma_{C}$  and hence is $\phi_2$-semistable by Lemma~\ref{Lemma: ElementaryFindsUnstable}.

Conversely, suppose that $\phi_2$ does not lie in  $\overline{\mathcal{P}}(\phi_1)$ so that  there exist $d \in \mathbb{Z}$ and $\Gamma_0 \subset \Gamma_{C}$ an elementary subgraph such that $\ell(\Gamma_0, d)(\phi_1)>0$ but $\ell(\Gamma_0, d)(\phi_2)<0$.  Construct a spanning tree $\Gamma$ for $\Gamma_{C}$ by picking spanning trees for $\Gamma_0$ and $\Gamma_0^{c}$ and then taking $\Gamma$ to be subgraph  obtained by connecting the spanning trees by an arbitrarily chosen edge from $\Gamma_0$ to $\Gamma_0^c$.  We will show that a $\phi_1$-stable sheaf $F$ that fails to be locally free at every node not in $\Gamma$ is not $\phi_2$-stable.  (Such a sheaf exists by Lemma~\ref{Lemma: ComponentGroup}).

By construction, $1=\#(\Gamma_0 \cap \Gamma_0^{c}) - \delta_{\Gamma_0}(F)$, so the $\phi_1$-stability inequality~\eqref{Eqn: AltDefOfStability} takes the form
\begin{displaymath}
	- \deg_{\Gamma_0}(F)+ \sum \limits_{v \in \operatorname{Vert}(\Gamma_0)} \phi_{1}(v) - \frac{\delta_{\Gamma_0}(F)}{2} > -1/2.
\end{displaymath}
We deduce that
\begin{align*}
	d - \deg_{\Gamma_0}(F) =& \sum_{v \in \operatorname{Vert}(\Gamma_0)} \phi_{1}(v)+\frac{-\delta_{\Gamma_0}(F)}{2}-\deg_{\Gamma_0}(F)+ \\
						& + d + \frac{\#(\Gamma_0 \cap \Gamma_0^{c})}{2}- \sum_{v \in \operatorname{Vert}(\Gamma_0)} \phi_{1}(v) +\\
						& + \frac{\delta_{\Gamma_0}(F) - \#(\Gamma_{0} \cap \Gamma_{0}^{c})}{2} \\
						& > -1/2+0+ -1/2 \\
						& =-1.
\end{align*}
We conclude  that $d-\deg_{\Gamma_0}(F) \ge 0$  since $d-\deg_{\Gamma_0}(F)$ is an integer.  It follows that \[\deg_{\Gamma_0}(F) \le d < \sum_{v \in \operatorname{Vert}(\Gamma_0)} \phi_2(v) - \frac{\#(\Gamma_0 \cap \Gamma_0^{c})}{2},\] contradicting the $\phi_2$-stability inequality~\eqref{Eqn: DefOfStability} and therefore proving that $F$ is not $\phi_2$-stable.
\end{proof}

\begin{remark}
	Lemma~\ref{Lemma: StabilityPolytope} becomes false if the stability hyperplanes are defined to be the subsets $\{ \ell(\Gamma_0,d)(\phi) =0 \}$ with $\Gamma_0 \subset \Gamma$ a possibly non-elementary subgraph.  For example, if  $\Gamma$ is the graph depicted in Figure~\ref{Figure: ThreeTree}, then the decomposition by solid rectangles in Figure~\ref{Figure: StabilityDecomp} is the stability polytope decomposition of $V(\Gamma)$ (or more precisely its isomorphic image under the projection $V(\Gamma) \to \mathbb{R}^{2}$, $\phi \mapsto (\phi(v_1), \phi(v_2))$).  The subdivision of the polytope decomposition given by the dotted and solid lines is the decomposition by the hyperplanes $\{ \ell(\Gamma_0, d)(\phi)=0 \}$ with $\Gamma_0 \subset \Gamma$ a possibly non-elementary subgraph.
	
	Suppose that  $\Gamma=\Gamma_C$ is the dual graph of $C$.  When $\phi \in V(\Gamma)$ crosses a dotted line, the set of vectors $\vec{d} \in \mathbb{Z}^{\operatorname{Vert}(\Gamma)}$ satisfying \[\left| \vec{d}(\Gamma_0)- \sum \limits_{v \in \operatorname{Vert}(\Gamma_0)} \phi(v_0) \right| \le \frac{\#(\Gamma_{0} \cap \Gamma_0^{c})}{2}\] changes, but the subset of vectors of the form $\vec{d}={\deg}(F)(C)$ for $F$ a $\phi$-semistable sheaf does not change.
	
	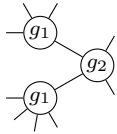
\begin{figure}[h]
    \centering
    \begin{tikzpicture}[baseline]
      \path(0,0) ellipse (2 and 2);
      \tikzstyle{level 1}=[counterclockwise from=-60,level distance=9mm,sibling angle=120]
      \node (A0) at (0:1) {$\scriptstyle{g_2}$} child child;
      \tikzstyle{level 1}=[counterclockwise from=60,level distance=9mm,sibling angle=60]
      \node (A1) at (120:1) {$\scriptstyle{g_1}$} child child child;
      \tikzstyle{level 1}=[counterclockwise from=180,level distance=9mm,sibling angle=40]
      \node (A2) at (240:1) {$\scriptstyle{g_1}$} child child child child;

      \path (A0) edge [bend left=0.000000] (A1);
      \path (A0) edge [bend left=0.000000] (A2);
    \end{tikzpicture}
    \caption{A tree $\Gamma$ with three vertices, $g_1+g_2+g_3=g$}
    \label{Figure: ThreeTree}
\end{figure}

\begin{figure}[h]
    \centering
    \includegraphics[scale=0.7]{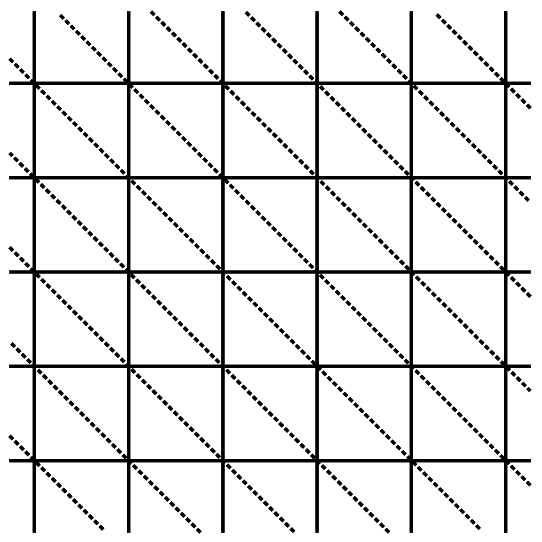}
    \caption{The stability polytopes of $V(\Gamma)$}
    \label{Figure: StabilityDecomp}
\end{figure}
\end{remark}

We now define the combinatorial objects that control the stability conditions over $\Mm{g}{n}^{\text{TL}}$.
\begin{definition} \label{Definition: PolytopesFamilyCurves}
	For $\Gamma \in \mathcal{G}_{g,n}^{\text{TL}}$ a treelike graph,  $\Gamma_0 \subset \Gamma$ an elementary subgraph, and $d \in \mathbb{Z}$ an integer, we define
	\[
		H(\Gamma, \Gamma_0, d):= \{ \phi \in V_{g,n}^{\text{TL}} \colon \ell(\Gamma_0, d)(\phi(\Gamma)) =0 \}
	\]
	to be a \textbf{stability hyperplane}.
	
	A \textbf{stability polytope} for $V_{g,n}^{\text{TL}}$ is defined to be a connected component of
	\[
		V_{g,n}^{\text{TL}} - \bigcup\limits_{\substack{ \Gamma_0 \subset \Gamma \text{ elementary}\\ d \in \mathbb{Z}}} H(\Gamma, \Gamma_0, d).
	\]
	The collection  of all stability polytopes is defined to be the \textbf{stability polytope decomposition} of $V_{g,n}^{\text{TL}}$.

	Given a stability parameter $\phi \in V_{g,n}^{\text{TL}}$, we denote the unique stability polytope containing $\phi$ by $\mathcal{P}(\phi)$.
\end{definition}
As with the stability polytopes of $V(\Gamma)$, the stability polytopes of $V_{g,n}^{\text{TL}}$ are also rational bounded convex polytopes.

Having shown in Lemma~\ref{Lemma: PhiDeterminedByTwoComponents} that a stability parameter $\phi \in V_{g,n}^{\text{TL}}$ is determined by its restriction to all $2$-vertex treelike graphs $\Gamma(i,S)$, we now prove analogous statements about stability hyperplanes and polytopes.  To shorten notation, write $H(i,S,d)$ for the hyperplane  $H(\Gamma(i,S), \Gamma_0,d)$ associated to the $2$-vertex graph $\Gamma(i, S)$ and the subgraph $\Gamma_0 \subset \Gamma$ consisting of the vertex $v_1$ (defined as in Definition~\ref{Def: TwoComponentGraph}).

\begin{lemma} \label{Lemma: HyperplanesByTwoComponents}
Every stability hyperplane $H \subset V_{g,n}^{\text{TL}}$ can be written  as the hyperplane associated to a  treelike graph with two vertices.  In other words,
	\begin{equation} \label{unambiguous}
		H = H(i,S,d)
	\end{equation}
	for  some $(i,S)$.
\end{lemma}
\begin{proof}
Let $\Gamma \in \mathcal{G}_{g,n}^{\text{TL}}$ be an arbitrary treelike graph and $\Gamma_0 \subset \Gamma$ an elementary subgraph. Consider a composition of contractions from $\Gamma$ that contracts $\Gamma_0$ to a vertex $w$ and its complement $\Gamma_0^{c}$ to a vertex $w^c$, and then contracts all resulting loops. The resulting graph $\Gamma'$ with two vertices $w$ and $w^c$ is isomorphic to $\Gamma(i,S)$ for some $(i,S)$.  By inductively applying compatibility with contractions, we find $\phi(\Gamma')(w) = \sum_{v \in \Gamma_0} \phi(\Gamma)(v)$. This implies that 	$	 H(\Gamma, \Gamma_0, d)$ equals $H(\Gamma(i,S), d)$.\end{proof}

Lemma~\ref{Lemma: HyperplanesByTwoComponents} implies that when $\phi \in V_{g,n}^{\text{TL}}$ varies in such a way that  $\phi$-semistability changes, that variation is already witnessed  over the dual graph $\Gamma(i,S)$ of a boundary divisor $\Delta_{i,S}$ as in Notation \ref{Definition: AdmissiblePair}.  As a corollary, we obtain the following description of the stability polytopes.

\begin{corollary} \label{Corollary: StabilityDeterminedByTwoComponents}
	Given a stability polytope $\mathcal{P}(\Gamma(i,S))$ for every stable marked graph $\Gamma(i,S)$, there  exists a unique stability polytope $\mathcal{P} \subset V_{g,n}^{\text{TL}}$ such that the projection under \eqref{Eqn: Projection} onto $V(\Gamma(i,S))$ is $\mathcal{P}(\Gamma(i,S))$ for all $\Gamma(i,S)$.
\end{corollary}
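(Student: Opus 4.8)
The plan is to deduce Corollary~\ref{Corollary: StabilityDeterminedByTwoComponents} directly from Lemma~\ref{Lemma: PhiDeterminedByTwoComponents} and Lemma~\ref{Lemma: HyperplanesByTwoComponents}, treating the stability polytope decomposition of $V_{g,n}$ as the pullback under the bijection \eqref{Eqn: Projection} of the product of the decompositions of the $V(\Gamma)$ over all $2$-vertex graphs $\Gamma$. Write $\pi \colon V_{g,n} \xrightarrow{\sim} \prod_{\#\operatorname{Vert}(\Gamma)=2,\, b_1(\overline{\Gamma})=0} V(\Gamma)$ for the bijection of Lemma~\ref{Lemma: PhiDeterminedByTwoComponents}, and $\pi_\Gamma$ for its composition with the projection to the factor $V(\Gamma)$. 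First I would observe that, by Lemma~\ref{Lemma: HyperplanesByTwoComponents}, every stability hyperplane $H \subset V_{g,n}$ is of the form $\pi_\Gamma^{-1}(H(\Gamma,d))$ for some $2$-vertex graph $\Gamma=\Gamma(i,S)$ and some $d \in \mathbb{Z}$, where $H(\Gamma,d)\subset V(\Gamma)$ is a stability hyperplane in the sense of Definition~\ref{Definition: PolytopesOneCurve}. Conversely, each such preimage is visibly one of the hyperplanes listed in Definition~\ref{Definition: PolytopesFamilyCurves}. Hence the union of all stability hyperplanes in $V_{g,n}$ is exactly $\pi^{-1}$ of the union, over all $2$-vertex $\Gamma$, of the pullbacks of the stability hyperplanes in $V(\Gamma)$; equivalently, the arrangement in $V_{g,n}$ is the $\pi$-preimage of the product arrangement $\bigcup_\Gamma \operatorname{pr}_\Gamma^{-1}\big(\bigcup_d H(\Gamma,d)\big)$ in $\prod_\Gamma V(\Gamma)$.

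Next I would pass to connected components. Since $\pi$ is an affine-linear isomorphism, it induces a bijection between the connected components of the complement of the arrangement in $V_{g,n}$ (these are the stability polytopes of $V_{g,n}$) and the connected components of the complement of the product arrangement in $\prod_\Gamma V(\Gamma)$. But the complement of a product arrangement is the product of the complements, so its connected components are exactly the products $\prod_\Gamma \mathcal{P}(\Gamma)$ where, for each $2$-vertex $\Gamma$, $\mathcal{P}(\Gamma)$ runs over the connected components of $V(\Gamma) \setminus \bigcup_d H(\Gamma,d)$ — that is, over the stability polytopes of $V(\Gamma)$. (The only mild point to note is that the product is taken over the finite set of $2$-vertex stable marked graphs of loop-free circuit rank $0$, and that the complement within each $V(\Gamma)$ is indeed nonempty with the expected components, which is immediate from Example~\ref{Example: TwoComponentExample}.) Combining the two observations: given a choice of stability polytope $\mathcal{P}(\Gamma) \subset V(\Gamma)$ for every $2$-vertex graph $\Gamma$, the set $\mathcal{P} := \pi^{-1}\big(\prod_\Gamma \mathcal{P}(\Gamma)\big)$ is a connected component of the complement of the hyperplane arrangement in $V_{g,n}$, hence a stability polytope of $V_{g,n}$, and its projection to $V(\Gamma)$ is $\pi_\Gamma(\mathcal{P}) = \mathcal{P}(\Gamma)$ for every $2$-vertex $\Gamma$. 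Uniqueness follows because any stability polytope $\mathcal{P}'$ with $\pi_\Gamma(\mathcal{P}')=\mathcal{P}(\Gamma)$ for all such $\Gamma$ satisfies $\pi(\mathcal{P}') \subseteq \prod_\Gamma \mathcal{P}(\Gamma)$, and two distinct connected components of an open set that are both mapped into the same connected component $\prod_\Gamma \mathcal{P}(\Gamma)$ would contradict the bijectivity of $\pi$ on components; so $\mathcal{P}'=\mathcal{P}$.

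The step I expect to require the most care is the identification, under Lemma~\ref{Lemma: HyperplanesByTwoComponents}, of the \emph{full} hyperplane arrangement in $V_{g,n}$ with the $\pi$-preimage of the product arrangement — in particular checking that no hyperplane coming from a graph with three or more vertices contributes anything genuinely new (this is precisely the content of Lemma~\ref{Lemma: HyperplanesByTwoComponents}, so the work is really just assembling it correctly) and, conversely, that each product hyperplane $\operatorname{pr}_\Gamma^{-1}(H(\Gamma,d))$ for a $2$-vertex $\Gamma$ does pull back to a genuine stability hyperplane of $V_{g,n}$ in the sense of Definition~\ref{Definition: PolytopesFamilyCurves}, i.e.\ that $\Gamma(i,S)$ itself appears among the graphs of loop-free circuit rank $0$ used there and that $H(\Gamma,d)$ equals $\ell(w_1,d)(\phi(\Gamma))=0$ for the subgraph $w_1$ consisting of one of the two vertices. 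Both directions are essentially bookkeeping once Lemmas~\ref{Lemma: PhiDeterminedByTwoComponents} and~\ref{Lemma: HyperplanesByTwoComponents} are in hand, so the corollary is a short formal consequence rather than a computation.
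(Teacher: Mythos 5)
Your proposal is correct and follows essentially the same route as the paper: both rest entirely on Lemma~\ref{Lemma: PhiDeterminedByTwoComponents} (the affine bijection onto the product over $2$-vertex graphs) and Lemma~\ref{Lemma: HyperplanesByTwoComponents} (every stability hyperplane of $V_{g,n}$ is pulled back from a $2$-vertex factor), with your identification of the arrangement in $V_{g,n}$ as the preimage of the product arrangement simply making explicit the component-matching that the paper's terser argument leaves implicit.
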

\begin{proof}
Uniqueness is Lemma~\ref{Lemma: PhiDeterminedByTwoComponents}. To prove existence, by the same lemma there exists $\phi_0 \in V_{g,n}^{\text{TL}}$ satisfying $\phi_{0}(\Gamma(i,S)) \in \mathcal{P}(\Gamma(i,S))$ for all stable graphs $\Gamma(i,S)$.  By Lemma~\ref{Lemma: HyperplanesByTwoComponents}, $\phi_0$ is not contained in a stability hyperplane, so it is contained in a unique stability polytope that satisfies the desired condition by the same lemma.
\end{proof}

To study the set of stability polytopes, we introduce the following group action. Let $\Pic^{0}(\C_{g,n}/\Mm{g}{n}^{\text{TL}})$ be the subgroup of the group $\Pic(\C_{g,n})/\pi^{*}(\Pic(\Mm{g}{n}^{\text{TL}}))$ generated by the images of line bundles $L$ whose restriction to any fiber of  $\pi \colon \C_{g,n} \to \Mm{g}{n}^{\text{TL}}$ has total degree $0$.  The \textbf{natural action} of $\Pic^{0}(\C_{g,n}/\Mm{g}{n}^{\text{TL}})$ on $V_{g,n}^{\text{TL}}$ is defined to be the multidegree translation
\[
\phi \mapsto \phi + \deg(L),
\]
where $\deg(L)(\Gamma_C):= \deg(L)(C)$ is the multidegree of the restriction of $L$ to $C$.

\begin{lemma}
	The natural action of $\Pic^{0}(\C_{g,n}/\Mm{g}{n}^{\text{TL}})$ on $V_{g,n}^{\text{TL}}$ maps stability polytopes to stability polytopes.
\end{lemma}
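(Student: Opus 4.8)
The plan is to imitate the proof of Lemma~\ref{Lemma: ActionOnPolytopes}, the point being that translation by an element $\psi \in (W_{g,n})_{\mathbb{Z}}$ permutes the (countable) list of stability hyperplanes appearing in Definition~\ref{Definition: PolytopesFamilyCurves}. Once that is established, translation by $\psi$ is a homeomorphism of $V_{g,n}$ carrying the union of all stability hyperplanes onto itself, hence it restricts to a self-homeomorphism of the complement and permutes the connected components of that complement; by definition these components are precisely the stability polytopes, which gives the lemma.

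First I would check that the natural action is well defined, i.e.\ that $\psi + \phi \in V_{g,n}$ whenever $\psi \in (W_{g,n})_{\mathbb{Z}}$ and $\phi \in V_{g,n}$. This is immediate from the definitions: the contraction-compatibility relation \eqref{Eqn: CompatibilityRelation} is additive in its argument, so $(\psi+\phi)(\Gamma_1)$ is compatible with $(\psi+\phi)(\Gamma_2)$ with respect to every contraction $c\colon \Gamma_1\to\Gamma_2$, and moreover $\sum_{v}(\psi(\Gamma)+\phi(\Gamma))(v) = 0 + (g-1) = g-1$, so $(\psi+\phi)(\Gamma)\in V(\Gamma)$ for each $\Gamma$ with $b_1(\overline\Gamma)=0$.

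Next, fix a stability hyperplane $H = \{\phi \in V_{g,n} : \ell(\Gamma_0, d)(\phi(\Gamma)) = 0\}$, where $\Gamma$ is a stable marked graph with $b_1(\overline\Gamma)=0$, $\Gamma_0 \subset \Gamma$ is elementary, and $d \in \mathbb{Z}$. Applying the identity from the proof of Lemma~\ref{Lemma: ActionOnPolytopes} componentwise (to $\psi(\Gamma)$ and $\phi(\Gamma)$ in place of $\psi$ and $\phi$), and using that $(\psi+\phi)(\Gamma) = \psi(\Gamma)+\phi(\Gamma)$ by definition of the natural action, we get
\[
\ell(\Gamma_0, d)\big((\psi + \phi)(\Gamma)\big) = \ell\big(\Gamma_0,\, d - \psi(\Gamma)(\Gamma_0)\big)\big(\phi(\Gamma)\big),
\]
where $\psi(\Gamma)(\Gamma_0) := \sum_{v \in \operatorname{Vert}(\Gamma_0)} \psi(\Gamma)(v)$ is an \emph{integer} because $\psi(\Gamma) \in W_{\mathbb{Z}}(\Gamma) \subset \mathbb{Z}^{\operatorname{Vert}(\Gamma)}$. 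Solving for the preimage, translation by $\psi$ carries $H$ onto the set $\{\phi : \ell(\Gamma_0,\, d + \psi(\Gamma)(\Gamma_0))(\phi(\Gamma)) = 0\}$, which (since $d + \psi(\Gamma)(\Gamma_0)\in\mathbb{Z}$) is again one of the hyperplanes listed in Definition~\ref{Definition: PolytopesFamilyCurves}. Since $(W_{g,n})_{\mathbb{Z}}$ is a group, applying the same to $-\psi$ shows this is a bijection of the set of all stability hyperplanes, and the argument of the first paragraph then finishes the proof.

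I do not expect any genuine obstacle: the whole lemma is bookkeeping with affine-linear functions. The only point deserving a sentence of care is the well-definedness of the action, namely that adding a solution of the homogeneous compatibility-and-sum-zero system preserves membership in the affine space $V_{g,n}$ — and this is handled above.
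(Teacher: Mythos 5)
Your proof is correct and follows essentially the same route as the paper, which simply reduces the statement to Lemma~\ref{Lemma: ActionOnPolytopes} via the identity $\ell(\Gamma_0,d)(\psi+\phi)=\ell(\Gamma_0,d-\psi(\Gamma_0))(\phi)$ applied componentwise. Your additional checks (well-definedness of the action on $V_{g,n}$ and the permutation of connected components) are correct and merely make explicit what the paper leaves implicit.
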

\begin{proof}	
	This follows from the identity
	\[
		\ell(\Gamma, \Gamma_0, d)(\deg(L)+\phi) = \ell(\Gamma, \Gamma_0, d- \deg_{\Gamma_0}(L))(\phi).
	\] \end{proof}

 The subschemes $\C^{\pm}_{i, S} \subset \C_{g,n}$ of the universal curve associated with each component over $\Delta_{i,S}$ (see Notation~\ref{Definition: AdmissiblePair}) are effective Cartier divisors, so their associated line bundles $\mathcal{O}( \C^{\pm}_{i, S} )$ are defined, and we use them to prove the following lemma.

\begin{lemma} \label{Lemma: Transitivity}
	The subgroup $W_{g,n}$ of $\Pic^{0}(\C_{g,n}/\Mm{g}{n}^{\text{TL}})$ generated by the line bundles $\mathcal{O}(\C^{\pm}_{i, S})$ acts freely and transitively on the set of stability polytopes in $V_{g,n}^{\text{TL}}$.
\end{lemma}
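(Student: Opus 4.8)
The plan is to establish the two assertions — freeness and transitivity — separately, reducing both to the two-vertex case via Lemma~\ref{Lemma: PhiDeterminedByTwoComponents} and Corollary~\ref{Corollary: StabilityDeterminedByTwoComponents}. First I would set up the dictionary: by Lemma~\ref{Lemma: Transitivity}'s hypothesis we work with the subgroup $G \subset \Pic^{0}(\C_{g,n}/\Mm{g}{n}^{(0)})$ generated by the classes $\mathcal{O}(\C^{\pm}_{i,S})$, acting on stability polytopes through the multidegree homomorphism ${\deg}\colon \Pic^{0}(\C_{g,n}/\Mm{g}{n}^{(0)}) \to (W_{g,n})_{\mathbb{Z}}$ and the translation action of Lemma~\ref{Lemma: ActionOnPolytopes}. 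The key computation is to identify ${\deg}(\mathcal{O}(\C^{+}_{i,S}))$: restricted to the fiber over a general point of $\Delta_{i',S'}$, this is the multidegree of the line bundle cutting out $\C^{+}_{i,S}$, which is $0$ unless $(i',S')=(i,S)$ and in that case is $(-1,+1)$ on $(\C^{+}_{i,S}, \C^{-}_{i,S})$ (since $\C^{+}_{i,S}+\C^{-}_{i,S}$ is a fiber of $\pi$, hence trivial modulo $\pi^*$, so $\mathcal{O}(\C^{+}_{i,S})$ and $\mathcal{O}(\C^{-}_{i,S})^{-1}$ agree in $\Pic^0$; and the self-intersection/adjunction computation on the two components gives $\deg_{\C^{+}_{i,S}}(\mathcal{O}(\C^{+}_{i,S})) = \C^{+}_{i,S}\cdot \C^{-}_{i,S}\cdot(\text{base}) = \pm 1$). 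Once this is in hand, ${\deg}(G) \subset (W_{g,n})_{\mathbb{Z}}$ is precisely the subgroup whose projection to each two-vertex graph $V(\Gamma(i,S))$ is the rank-one lattice $\mathbb{Z}\cdot(1,-1)$.

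Next I would prove transitivity. By Corollary~\ref{Corollary: StabilityDeterminedByTwoComponents} a stability polytope $\mathcal{P} \subset V_{g,n}$ is uniquely determined by the tuple of its projections $\mathcal{P}(\Gamma(i,S))$ onto the two-vertex graphs, and by Example~\ref{Example: TwoComponentExample} each such projection is $\mathcal{P}(\vec{d})$ for a unique integral vector $\vec{d}=(d(v_1),d(v_2))$ with $d(v_1)+d(v_2)=g-1$ — equivalently, an integer $d(i,S)$. Thus stability polytopes are in bijection with tuples $(d(i,S))_{(i,S)}$ of integers indexed by the admissible pairs. The element $\mathcal{O}(\C^{+}_{i,S})$ acts on this data by shifting exactly the $(i,S)$-coordinate by $\pm 1$ (from the degree computation above, using the translation identity $\ell(\Gamma_0,d)(\psi+\phi)=\ell(\Gamma_0,d-\psi(\Gamma_0))(\phi)$ of Lemma~\ref{Lemma: ActionOnPolytopes} to see that translating $\phi(\Gamma(i,S))$ by $(1,-1)$ sends $\mathcal{P}(d(i,S))$ to $\mathcal{P}(d(i,S)+1)$) and fixes all other coordinates. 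Since the $\mathcal{O}(\C^{+}_{i,S})$ together generate all coordinatewise integer shifts, $G$ acts transitively on the set of tuples, hence on the set of stability polytopes.

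Finally, freeness: an element $g \in G$ fixing a stability polytope $\mathcal{P}$ must, by the correspondence above, fix every coordinate $d(i,S)$, so ${\deg}(g)$ projects to the zero vector in each two-vertex graph; by Lemma~\ref{Lemma: PhiDeterminedByTwoComponents} applied to $(W_{g,n})_{\mathbb{Z}}$ (which is the associated lattice of translations and is determined by its two-vertex projections by exactly the same argument as for $V_{g,n}$), we get ${\deg}(g)=0$, so $g$ acts trivially on all of $V_{g,n}$ — thus the action is free. The main obstacle I anticipate is the degree computation ${\deg}(\mathcal{O}(\C^{\pm}_{i,S}))$ over a general point of each $\Delta_{i',S'}$: one must be careful that $\Mm{g}{n}^{(0)}$-families behave well (the components $\C^{\pm}_{i,S}$ really are Cartier, already noted before the lemma), that the two-vertex reduction commutes with the multidegree map, and that no contributions appear from deeper boundary strata — but since polytopes and the action are already reduced to two-vertex graphs by the cited results, it suffices to do the computation on a general one-nodal curve in $\Delta_{i,S}$, where it is the standard "a node contributes $(-1,+1)$" calculation.
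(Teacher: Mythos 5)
Your proposal is correct and follows essentially the same route as the paper's proof: compute $\deg(\mathcal{O}(\C^{+}_{i,S}))$ to be $(-1,+1)$ on $\Gamma(i,S)$ and $(0,0)$ on every other two-vertex graph, use Example~\ref{Example: TwoComponentExample} and Corollary~\ref{Corollary: StabilityDeterminedByTwoComponents} to reduce transitivity to independent integer shifts of the coordinates $d(i,S)$, and deduce freeness from the multidegree. The only step to tighten is the very end: ``$\deg(g)=0$, so $g$ acts trivially'' gives that the kernel of the action consists of multidegree-zero elements, and freeness additionally needs that the only multidegree-zero element of the subgroup generated by the $\mathcal{O}(\C^{\pm}_{i,S})$ is the identity (immediate since $\mathcal{O}(\C^{+}_{i,S})\otimes\mathcal{O}(\C^{-}_{i,S})$ is a pullback from the base and the remaining generators have linearly independent multidegrees), a point the paper likewise dispatches in one clause.
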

\begin{proof}
	Consider the line bundle $L := \mathcal{O}(\C^{+}_{i, S})$ associated to a pair $(i, S)$ as in Notation~\ref{Definition: AdmissiblePair}.  Its multidegree vector ${\deg}(L)$  satisfies
	\begin{equation} \label{Eqn: DescriptionOfActions}
		{\deg}(F)(\Gamma(i',S')) = \begin{cases}
										(-1, +1) 	& \text{ if $(i',S')=(i, S)$;}\\
										(0, 0)		& \text{ if $(i',S') \ne (i, S)$.}
									\end{cases}
	\end{equation}
  Using the description of stability polytopes associated to a graph $\Gamma(i,S)$ given in Example~\ref{Example: TwoComponentExample}, and  from Corollary~\ref{Corollary: StabilityDeterminedByTwoComponents}, we conclude that $W_{g,n}$ acts transitively on $V_{g,n}^{\text{TL}}$.  To see that the action is free, observe that an element of $\Pic^{0}(\C_{g,n}/\Mm{g}{n}^{\text{TL}})$ acts as translation by its multidegree, so an element acting trivially must have trivial multidegree and the only such element is the identity (because rational and numerical equivalence coincide for line bundles on $\C_{g,n}$).
\end{proof}

\begin{remark} \label{phicanvertex} By Lemma  \ref{Lemma: HyperplanesByTwoComponents} the maximum number of stability hyperplanes passing through a given point of $V_{g,n}^{\text{TL}}$ equals the number of stable graphs as in Notation \ref{Definition: AdmissiblePair}, which is also the dimension of $V_{g,n}^{\text{TL}}$, so the points for which this maximum is attained are vertices of the stability polytope decomposition.  The proof of Lemma \ref{Lemma: Transitivity} also shows that the action of $W_{g,n}$ on the set of vertices of the stability polytope decomposition is free and transitive. It follows from its definition that the canonical parameter $\phi_{\text{can}}$ is one of these vertices.
\end{remark}

\subsection{Representability}  \label{Subsection: Rep}
In this section we construct, for $\phi \in V_{g,n}^{\text{TL}}$ nondegenerate,  a family $\Jb_{g,n}(\phi) \to \Mm{g}{n}^{\text{TL}}$ of compactified Jacobians and a family of theta divisors $\ThDiv(\phi) \subset \Jb_{g,n}(\phi)$, and then describe their properties. Both $\Jb_{g,n}(\phi)$ and $\ThDiv(\phi)$ are constructed as  restrictions  of analogous objects over $\Mb_{g, n}$ that are denoted by $\Jb^{g-1}_{g,n}(A,M)$ and by $\ThDiv(A,M)$ (where $A,M$ are certain line bundles on the universal curve $\Cb_{g,n}$).   In this section we will often need to assume $n \geq 1$, an assumption that we use in the proof of  Lemmas~\ref{existsAM} and \ref{Lemma: SectionExists}.

\begin{definition} \label{Def: familytofsheaves}
	Given  a $k$-scheme $T$ and a family of  stable pointed  curves $(C/T, p_1, \dots, p_n) \in \Mb_{g,n}(T)$,  a \textbf{family of rank~$1$, torsion-free sheaves} on $C/T$ is a locally finitely presented $\mathcal{O}_{C}$-module $F$ that is $\mathcal{O}_{T}$-flat and has rank~$1$, torsion-free fibers.
\end{definition}

Our compactified universal Jacobians are constructed using Simpson's result \cite[Theorem 1.21]{simpson94}. For $\phi$ nondegenerate, the moduli space of $\phi$-stable sheaves of degree $g-1$  is not a moduli space of slope semistable sheaves because, as Lemma~\ref{Lemma: PhiContainsSimpson} shows, slope semistability coincides with $\phi$-semistability for $\phi=\phi_{\text{can}}$, which is a (maximally) degenerate parameter as we observed in Remark \ref{phicanvertex}.  We can, however, deduce representability from Simpson's result in a straightforward manner.

\begin{definition} \label{Def: moduliStacks}
	Given $A$ and $M$  line bundles on the universal curve $\Cb_{g, n}\to \Mb_{g,n}$ with $A$ ample relative to $\Mb_{g,n}$, we say that a family of rank~$1$ torsion-free sheaves is \textbf{twisted slope (semi)stable} with respect to $(A,M)$ if this condition holds fiberwise in the sense of Definition~\ref{Def: SlopeSS}.  We define $\Jb_{g,n}^{d, \text{pre}}(A, M)$ to be the category fibered in groupoids whose objects are  tuples $(C, p_1, \dots, p_n; F)$ consisting of a family of stable $n$-pointed curves $(C/T, p_1, \dots, p_n)$ of genus $g$, and a family of rank~$1$ torsion-free sheaves $F$ of degree $d$ on $C/T$ that is twisted slope (semi)stable with respect to $(A,M)$.  The morphisms of $\Jb_{g,n}^{d, \text{pre}}(A, M)$ over a $k$-morphism $t \colon T \to T'$ are pairs of an isomorphism of pointed curves $\widetilde{t} \colon (C, p_1, \dots, p_n) \cong (C'_{T}, (p'_1)_{T}, \dots, (p'_n)_{T})$, and an isomorphism of $\mathcal{O}_{C}$-modules $F \cong \widetilde{t}^{*}(F'_{T})$.
	
	For every object $(C, p_1, \dots, p_n; F)$ of $\Jb_{g,n}^{d, \text{pre}}(A, M)(T)$ the rule that sends $g \in \mathbb{G}_{m}(T)$ to the automorphism of $F$ defined by multiplication by $g$ defines an embedding $\mathbb{G}_{m}(T) \to \Aut(C, p_1, \dots, p_n; F)$  that is compatible with pullbacks.  The image of this embedding is contained in the center of the automorphism group, so the rigidification stack in the sense of \cite[Section~5.1.4]{abramovich} is defined, and we call this stack the \textbf{compactified (twisted slope (semi)stable) universal Jacobian} $\Jb^{d}_{g,n}(A, M)$.
\end{definition}

Note that the rigidification is defined in \cite[Section~5.1.4]{abramovich} for an algebraic stack, but their construction is more generally valid for stacks.

\begin{proposition} \label{existence1}
	Given line bundles $A$ and $M$ as in Definition~\ref{Def: moduliStacks} with the property that, for degree $d$ sheaves,  twisted slope semistability  coincides with twisted slope stability, then $\Jb^{d}_{g,n}(A, M)$ is a proper Deligne--Mumford stack, and the forgetful map $\Jb^{d}_{g,n}(A,M) \to \Mb_{g,n}$ is representable.
\end{proposition}
\begin{proof}
	When $M$ is the trivial line bundle $\mathcal{O}$ (so twisted slope semistability coincides with slope semistability), this is  \cite[Theorem~1.21]{simpson94}. The conclusion in loc.~cit.~that \'{e}tale locally a universal family of sheaves exists is equivalent to the representability of the forgetful morphism.  Tensoring with $M$ defines an isomorphism $\Jb_{g, n}^{d}(A, M) \cong \Jb^{d+e}(A, \mathcal{O})$ (for $e$ the degree of $M$ on a fiber), so we deduce the general case from the case $M=\mathcal{O}$.
\end{proof}

\begin{remark}
	If the hypothesis that stability coincides with semistability is dropped, then the authors expect that $\Jb^{d}_{g, n}(A, M)$ is still an algebraic stack, but  then the forgetful morphism $\Jb^{d}_{g,n}(A, M) \to \Mb_{g,n}$ is not representable, and $\Jb^{d}_{g,n}(A, M)$ is not Deligne--Mumford.  We do not pursue this issue here because we have no use for these more general families in this paper.
\end{remark}

We now show that when $n \geq 1$ there exist $A$ and $M$ satisfying the hypotheses of Proposition~\ref{existence1}.

\begin{lemma} \label{existsAM}
	Let $n \geq 1$. Then there exist line bundles $A, M$ as in Definition~\ref{Def: moduliStacks} such that, for degree $d$ sheaves, twisted slope semistability with respect to $(A,M)$ coincides with twisted  slope stability with respect to $(A,M)$.
\end{lemma}
\begin{proof}
Pick an odd number $B>2g-2+n-1$ and then set
	\begin{align*}
		b :=& B-(2g-2+n-1), \\
		A :=& \omega_{\pi}(b p_1+p_2+\dots+p_n), \text{ and }\\
		M :=& \mathcal{O}(  (g-d) \cdot p_1).
	\end{align*}
	It is enough to show that, for all subcurves $C_0 \subset C$, equality does not hold in the twisted stability inequality \eqref{Eqn: SlopeAsPhi}.

If equality held, then  $\deg_{\Gamma_0}(A)/\deg(A)$ would be a half-integer for some proper subgraph $\Gamma_0 \subset \Gamma_{C}$, but this is impossible because the slope $\deg_{\Gamma_0}(A)/\deg(A)$ is of the form $k/B$ for $k \in \mathbb{Z}$, $0<k<B$.
\end{proof}

Lemma~\ref{existence1} combined, with Lemma~\ref{existsAM} implies the existence of many Deligne--Mumford stacks $\Jb^{d}_{g, n}(A, M) \to \Mb_{g, n}$.  The following lemma describes important properties of these stacks.

\begin{lemma} \label{flatandsmooth}
	For $(A, M)$ such that twisted slope semistability coincides with twisted slope stability, the representable forgetful map $\Jb^{d}_{g,n}(A,M) \to \Mb_{g,n}$ is flat, and  $\Jb^{d}_{g,n}(A,M)$ is $k$-smooth.
\end{lemma}
\begin{proof}
To ease notation, we temporarily write $\Mb$ for $\Mb_{g,n}$ and $\Jb$ for $\Jb^{d}_{g,n}(A,M)$.   Given a closed point $x \in \Jb$ that corresponds to a pair $(C, F)$, the completed local ring $\widehat{\mathcal{O}}_{\Jb, x}$ is a versal deformation ring for the deformation problem of deforming $(C, F)$.  This ring is described as a power series ring in \cite[Corollary~3.17]{kass12b} (where the ring is denoted by $R_2$), i.e.~$\Jb$ is $k$-smooth.
	
For flatness, we argue as follows.  Set $y$ equal to the image of $x$ under $\Jb \to \Mb$.  Define   $\widehat{\mathcal{O}}_{\Mb}^{\text{loc}}$ to be the versal deformation ring for the deformation problem of deforming the product of the completed local rings of $C$ at its nodes and define  $\widehat{\mathcal{O}}_{\Jb}^{\text{loc}}$ analogously.  The natural map
	\[
		\widehat{\mathcal{O}}_{\Mb, y} \widehat{\otimes}_{\widehat{\mathcal{O}}_{\Mb}^{\text{loc}}} \widehat{\mathcal{O}}_{\Jb}^{\text{loc}} \to \widehat{\mathcal{O}}_{\Jb, x}
	\]
	is formally smooth by \cite[Proposition~A.1]{fantechi99}.  Furthermore, the natural map $\widehat{\mathcal{O}}_{\Mb}^{\text{loc}} \to \widehat{\mathcal{O}}_{\Mb, y}$ is formally smooth by \cite[Proposition~(1.5)]{deligne69}, and $\widehat{\mathcal{O}}_{\Jb}^{\text{loc}} \to \widehat{\mathcal{O}}_{\Jb, x}$ is flat by the explicit description in \cite[Lemma~3.14]{kass12b} (which identifies the map as a completed tensor product of copies of the identity map and copies of the natural map $k[[t]] \to k[[t, s_1, s_2]]/ s_1 s_2 - t$).  Since smooth maps are flat, and  flatness is preserved by pullback  and completion, we have  described $\widehat{\mathcal{O}}_{\Mb, y} \to \widehat{\mathcal{O}}_{\Jb, x}$ as a composition of flat homomorphisms, hence this last map is itself flat.
\end{proof}

We now turn our attention to the theta divisor and its associated Chow class.  When $d=g-1$, the theta divisor  $\ThDiv(A,M) \subset \Jb_{g,n}^{g-1}(A,M)$ is an effective divisor supported on the locus of sheaves that admit a nonzero global section, but it is not uniquely determined by its support because $\ThDiv(A,M)$ can be nonreduced (see Corollary~\ref{Corollary: Nonreduced}).  We define $\ThDiv(A,M)$ using the formalism of the determinant of cohomology, a formalism  we use in Section~\ref{Section: Wallcrossing} to compute intersection numbers.  More precisely, the theta divisor is defined in terms of the cohomology of the following sheaf.
\begin{definition}
	The \textbf{universal family of sheaves} $F_{\text{uni}}$ on $\Jb_{g,n}^{d, \text{pre}}(A,M) \times_{\Mb_{g,n}} \Cb_{g,n}$ is defined to be the family of sheaves that corresponds to the identity under the $2$-Yoneda Lemma.  A sheaf $F_{\text{tau}}$ on $\Jb_{g,n}^d(A,M) \times_{\Mb_{g,n}} \Cb_{g,n}$ is defined to be a \textbf{tautological family of sheaves} if $F_{\text{tau}}$ is the pullback $(\sigma \times 1)^{*}F_{\text{uni}}$ of the universal sheaf for some section $\sigma$ of the rigidification morphism $\Jb_{g,n}^{d, \text{pre}}(A,M) \to \Jb_{g,n}^d(A,M)$.
\end{definition}
Concretely, $F_{\text{tau}}$ is a $\Jb_{g,n}^d(A,M)$-flat family of rank~$1$ torsion-free sheaves on \[\Jb_{g,n}^d(A,M) \times_{\Mb_{g,n}} \Cb_{g,n}\] such that the restriction to the fiber of \[\Jb_{g,n}^d(A,M) \times_{\Mb_{g,n}} \Cb_{g,n} \to \Jb_{g,n}^d(A,M)\] over a point $(C, p_1, \dots, p_n; F) \in \Jb_{g,n}^d(A,M)$ is isomorphic to $F$. 

\begin{lemma} \label{Lemma: SectionExists}
	Let $n \geq 1$. Then the rigidification morphism $\Jb_{g,n}^{d, \text{pre}}(A,M) \to \Jb_{g,n}^d(A,M)$ admits a section.  In particular, $\Jb_{g,n}^d(A,M)$ admits a tautological family $F_{\text{tau}}$.
\end{lemma}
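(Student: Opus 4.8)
The plan is to exhibit the rigidification morphism $r \colon \Jb_{g,n}^{\text{pre}}(\phi) \to \Jb_{g,n}(\phi)$ as a $\mathbb{G}_{m}$-gerbe — banded by the central subgroup $\mathbb{G}_{m} \hookrightarrow \Aut(C,p_{1},\dots,p_{n};F)$ singled out in Definition~\ref{Def: Jbar} — and then to trivialize this gerbe by producing a line bundle on $\Jb_{g,n}^{\text{pre}}(\phi)$ on which that $\mathbb{G}_{m}$ acts with weight $1$: a $\mathbb{G}_{m}$-gerbe carrying such a line bundle is trivial, hence admits a section. This is the step that uses the standing hypothesis $n>0$, since the line bundle is built from a marked point.

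First I would set up the sheaf. Let $\pi \colon \Jb_{g,n}^{\text{pre}}(\phi) \times_{\Mm{g}{n}^{(0)}} \C_{g,n} \to \Jb_{g,n}^{\text{pre}}(\phi)$ be the projection, let $\widetilde{p}_{1}$ be the section of $\pi$ pulled back from the first marking of the universal curve, and put $N := \widetilde{p}_{1}^{\,*} F_{\text{uni}}$. The first claim to verify is that $N$ is a line bundle on $\Jb_{g,n}^{\text{pre}}(\phi)$: the markings of a stable curve are smooth points, so $\widetilde{p}_{1}$ avoids the fibrewise singular locus; a rank $1$, torsion-free sheaf is invertible at a smooth point of a nodal curve, and $F_{\text{uni}}$ is flat over $\Jb_{g,n}^{\text{pre}}(\phi)$, so by the fibrewise criterion for local freeness $F_{\text{uni}}$ is invertible on an open neighbourhood of the image of $\widetilde{p}_{1}$; restricting along $\widetilde{p}_{1}$ then gives an invertible sheaf. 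The second claim is the weight computation: the central automorphism $t\cdot\id_{F}$ of an object $(C,p_{1},\dots,p_{n};F)$ acts on $F_{\text{uni}}$ by multiplication by $t$, hence acts on $N$ by multiplication by $t$, so $\mathbb{G}_{m}$ acts on $N$ through the identity character, i.e.~with weight $1$.

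Finally I would invoke the standard fact (see \cite[Section~5.1.5]{abramovich} and, more generally, the theory of gerbes) that a $\mathbb{G}_{m}$-gerbe carrying a weight $1$ line bundle is trivial; this yields a section $\sigma$ of $r$, and then $F_{\text{tau}} := (\sigma \times 1)^{*} F_{\text{uni}}$ is a tautological family by definition. Equivalently, and avoiding the language of gerbes, one can observe that $F_{\text{uni}} \otimes \pi^{*} N^{\vee}$ has weight $0$ and hence descends along the induced $\mathbb{G}_{m}$-gerbe $\Jb_{g,n}^{\text{pre}}(\phi) \times_{\Mm{g}{n}^{(0)}} \C_{g,n} \to \Jb_{g,n}(\phi) \times_{\Mm{g}{n}^{(0)}} \C_{g,n}$ to a family of $\phi$-stable sheaves restricting fibrewise to the given sheaf, which is precisely the datum of a section of $r$. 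I expect the only point requiring genuine care to be the verification that $N$ is an honest line bundle — the interaction between flatness of $F_{\text{uni}}$ over the base and the invertibility of rank $1$, torsion-free sheaves at smooth points — together with quoting the gerbe-triviality statement in the exact form needed; everything else is formal.
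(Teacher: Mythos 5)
Your proposal is correct and is essentially the paper's own argument: the paper also rigidifies along the first marking, sending $(C/T,p_1,\dots,p_n;F)$ to $(C/T,p_1,\dots,p_n;F\otimes\pi_T^{*}(p_1^{*}F)^{-1})$ and noting that the central $\mathbb{G}_m$ acts trivially on the twisted sheaf, so the map factors through $\Jb_{g,n}(\phi)$ by the universal property of rigidification — which is precisely your ``weight $0$ descent'' formulation with $N=\widetilde{p}_1^{\,*}F_{\text{uni}}$. The gerbe-trivialization packaging is an equivalent rephrasing rather than a different route, and your extra care in checking that $N$ is genuinely invertible (markings lie in the fibrewise smooth locus, where a flat family of rank $1$ torsion-free sheaves is invertible) is a point the paper passes over quickly.
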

\begin{proof}
	A section is defined by rigidifying sheaves along the marking $p_1$.  More formally, consider the morphism $\Jb_{g,n}^{d, \text{pre}}(A,M) \to \Jb_{g,n}^{d, \text{pre}}(A,M)$ that sends a tuple $(C/T, p_1, \dots, p_n; F)$  to $(C/T, p_1, \dots, p_n; F \otimes (\pi_{T})^{*} (p_{1}^{*}(F)^{-1}))$.  Here $\pi_{T} \colon C \to T$ is the structure morphism.  To see this morphism is well-defined, observe $F \otimes \pi_{T}^{*} (p_{1}^{*}(F)^{-1}) \otimes M$ is a flat family of $A$-stable sheaves because this sheaf is Zariski locally isomorphic to $F\otimes M$ over $T$ (as $p_{1}^{*}(F)$ is a line bundle).  Furthermore, $\Jb_{g,n}^{d, \text{pre}}(A,M)  \to \Jb_{g,n}^{d, \text{pre}}(A,M) $ has the property that the image of $\mathbb{G}_{m}(T) \subset \Aut(C, p_1, \dots, p_n; F)$ is mapped to the identity in $\Aut(C, p_1, \dots, p_n; F \otimes \pi_{T}^{*} p_1^{*}(F)^{-1})$ (a scalar $g \in \mathbb{G}_{m}(T)$ acts by $g$ on $F$, by $g^{-1}$ on $\pi_{T}^{*} p_{1}^{*}(F)^{-1}$, so by $g g^{-1} = 1$ on the tensor product).  By the universal property of rigidification the morphism $\Jb_{g,n}^{d, \text{pre}}(A,M)  \to \Jb_{g,n}^{d, \text{pre}}(A,M) $ factors as $\Jb_{g,n}^{d, \text{pre}}(A,M)  \to \Jb_{g,n}^d(A,M)\to \Jb_{g,n}^{d, \text{pre}}(A,M)  $, and $\Jb_{g,n}^d(A,M)\to \Jb_{g,n}^{d, \text{pre}}(A,M) $ defines the desired section.
\end{proof}

\begin{remark} \label{Remark: TautologicalNotUnique}
	The tautological family $F_{\text{tau}}$ is not uniquely determined.   Given a tautological family $F_{\text{tau}}$ and a line bundle $L$ on $\Jb_{g,n}^d(A,M)$, the sheaf $F_{\text{tau}} \otimes \pi^{*}(L)$ is also a tautological family.  However, every tautological family is of the form  $F_{\text{tau}} \otimes \pi^{*}L$ for some line bundle $L$ on $\Jb_{g,n}^d(A,M)$ by the Seesaw theorem.
\end{remark}

We now construct the theta divisor in degree $d=g-1$ as  the determinant of the cohomology of $F_{\text{tau}}$.  Recall the more general  construction of the determinant of an element of the derived category.  Generalizing earlier work with Mumford, Knudsen proved that the rule that assigns to a  bounded complex $\mathcal{E}$ of vector bundles  on $\Jb_{g,n}^{g-1}(A,M)$ the line bundle $\operatorname{det}(\mathcal{E}) := \bigotimes (\bigwedge^{\text{max}} \mathcal{E}^{i})^{(-1)^{i}}$ extends to a rule that assigns an isomorphism of line bundles to a quasi-isomorphism of perfect complexes \cite[Theorem~2.3]{knudsen}, so the determinant of an object in the bounded derived category is defined. (See also \cite[Section~6.1]{esteves01} for a more explicit approach in the special case of a family of curves, the case of current interest.) The  derived pushforward $\mathbb{R} \pi_{*} F_{\text{tau}}$ of a tautological family is an element of the bounded derived category by the finiteness theorem \cite[Theorem~8.3.8]{illusie}, so in particular, its determinant $\operatorname{det}(\mathbb{R} \pi_{*} F_{\text{tau}})$ is defined.

The inverse line bundle $\operatorname{det}(\mathbb{R} \pi_{*} F_{\text{tau}})^{-1}$ admits a distinguished nonzero global section that is constructed as follows.  The morphism $\pi \colon \Cb_{g,n} \to \Mb_{g,n}$ has relative cohomological dimension $1$, so $\mathbb{R}\pi_{*} F_{\text{tau}}$ can be represented by a $2$-term complex of vector bundles $\mathcal{E}^{0} \stackrel{d}{\longrightarrow} \mathcal{E}^{1}$. The generic fiber of this complex computes the cohomology of a degree $g-1$ sheaf, so it has Euler characteristic zero (by the Riemann--Roch formula).  We deduce that $\operatorname{rank} \mathcal{E}^{0}=\operatorname{rank} \mathcal{E}^{1}$, and so the top exterior power $\operatorname{det}(d) := \bigwedge^{\text{max}}(d)$ is a global section of
\begin{align*}
	\ShHom\left( \operatorname{det} \mathcal{E}^{0},  \operatorname{det} \mathcal{E}^{1}\right) 	=& \left(\operatorname{det} \mathcal{E}^{0}\right)^{-1} \otimes   \operatorname{det} \mathcal{E}^{1} \\
																	=& \operatorname{det}\left(\mathbb{R}\pi_{*}F_{\text{tau}}\right)^{-1}.
\end{align*}
A direct computation shows that  $\operatorname{det}(d) \in H^{0}(\Mb_{g,n}, \operatorname{det}(\mathbb{R}\pi_{*}F_{\text{tau}})^{-1} )$ is independent of the choice of complex $\mathcal{E}$ (i.e.~that $\operatorname{det}(d)$ is preserved by isomorphisms induced by quasi-isomorphisms; see \cite[Observation~43]{esteves01}).

The line bundle $\operatorname{det}(\mathbb{R} \pi_{*} F_{\text{tau}})$ is uniquely determined even though $F_{\text{tau}}$ is not:
\begin{lemma} \label{Lemma: ThetaDivisorDefined}
	If $F_{\text{tau}}$ and $G_{\text{tau}}$ are two tautological families on $\Jb_{g,n}^{g-1}(A,M)$, then
	\[
		\operatorname{det}(\mathbb{R} \pi_{*} F_{\text{tau}}) = \operatorname{det}(\mathbb{R} \pi_{*} G_{\text{tau}}),
	\]
	and this  identifies
	\[
		\operatorname{det}(d) \in H^{0}(\Mb_{g,n},  \operatorname{det}(\mathbb{R} \pi_{*} F_{\text{tau}})^{-1})
	\]
	 with
	 \[
	 	\operatorname{det}(e) \in H^{0}(\Mb_{g,n},  \operatorname{det}(\mathbb{R} \pi_{*} G_{\text{tau}})^{-1}).
	\]
\end{lemma}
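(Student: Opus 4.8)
The plan is to reduce the statement to the well-known case in which the two tautological families differ by pullback of a line bundle on the base of the rigidification, and then to track what happens to $\operatorname{det}(d)$ under that operation. By Remark~\ref{Remark: TautologicalNotUnique} and the Seesaw theorem, any two tautological families $F_{\text{tau}}$ and $G_{\text{tau}}$ satisfy $G_{\text{tau}} \cong F_{\text{tau}} \otimes \pi^{*}L$ for some line bundle $L$ on $\Jb_{g,n}(\phi)$. So the first step is to compute $\operatorname{det}(\mathbb{R}\pi_{*}(F_{\text{tau}} \otimes \pi^{*}L))$ in terms of $\operatorname{det}(\mathbb{R}\pi_{*}F_{\text{tau}})$ and $L$. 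By the projection formula $\mathbb{R}\pi_{*}(F_{\text{tau}} \otimes \pi^{*}L) \cong (\mathbb{R}\pi_{*}F_{\text{tau}}) \otimes L$, and since the determinant of cohomology is multiplicative in short exact sequences/triangles and the relative Euler characteristic of $F_{\text{tau}}$ is $g-1-(g-1)=0$, one gets $\operatorname{det}(\mathbb{R}\pi_{*}((F_{\text{tau}}) \otimes \pi^{*}L)) \cong \operatorname{det}(\mathbb{R}\pi_{*}F_{\text{tau}}) \otimes L^{\otimes \chi} = \operatorname{det}(\mathbb{R}\pi_{*}F_{\text{tau}})$ because $\chi=0$. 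Hence the two determinant line bundles are canonically identified; this is the key vanishing that makes the statement true and is special to degree $g-1$.

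Second, I would verify that this canonical identification carries $\operatorname{det}(d)$ to $\operatorname{det}(e)$. Concretely, if $\mathcal{E}^{0} \xrightarrow{d} \mathcal{E}^{1}$ represents $\mathbb{R}\pi_{*}F_{\text{tau}}$, then $\mathcal{E}^{0}\otimes L \xrightarrow{d\otimes 1_{L}} \mathcal{E}^{1}\otimes L$ represents $\mathbb{R}\pi_{*}G_{\text{tau}}$, using the projection formula. The top exterior power of $d\otimes 1_{L}$ as a section of $\ShHom(\operatorname{det}(\mathcal{E}^{0}\otimes L),\operatorname{det}(\mathcal{E}^{1}\otimes L)) = (\operatorname{det}\mathcal{E}^{0})^{-1}\otimes\operatorname{det}\mathcal{E}^{1}\otimes L^{-\operatorname{rank}\mathcal{E}^{0}}\otimes L^{\operatorname{rank}\mathcal{E}^{1}}$ equals $\operatorname{det}(d)$ after using $\operatorname{rank}\mathcal{E}^{0}=\operatorname{rank}\mathcal{E}^{1}$ to cancel the $L$-factors; this is an elementary linear-algebra fact about $\bigwedge^{\mathrm{max}}(d\otimes 1_{L}) = \bigwedge^{\mathrm{max}}(d)\otimes (1_L)^{\otimes r}$ together with the canonical trivialization $L^{-r}\otimes L^{r}\cong\mathcal{O}$. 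One must check this identification is compatible with the already-established independence of $\operatorname{det}(d)$ from the representing complex (cited from \cite[Observation~43]{esteves01}), which handles the ambiguity in choosing $\mathcal{E}$; and that the isomorphism $G_{\text{tau}}\cong F_{\text{tau}}\otimes\pi^{*}L$, while not canonical (it is unique up to a global unit, i.e.\ up to $H^0(\mathcal{O}^{*})$), induces a well-defined identification of the determinant lines because a global scalar $c$ acts on $\mathcal{E}^i\otimes L$ by $c$ and hence on $\operatorname{det}(d)$ by $c^{r}\cdot c^{-r}=1$.

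The main obstacle I anticipate is purely bookkeeping: making precise the claim that the identification of determinant line bundles is \emph{canonical} (independent of all the choices — the representing complex $\mathcal{E}$, the isomorphism $G_{\text{tau}}\cong F_{\text{tau}}\otimes\pi^{*}L$, and the line bundle $L$ itself when it is not unique). None of these choices affects $\operatorname{det}(d)$ because every ambiguity is by a global unit and the relevant exponents of that unit cancel (again because $\chi=0$), but writing this cleanly requires care with the functoriality of Knudsen's determinant \cite[Theorem~2.3]{knudsen} along quasi-isomorphisms. A clean way to package the argument is to observe that $\operatorname{det}(\mathbb{R}\pi_*(-))$ is a functor on the Picard groupoid of tautological families (morphisms being isomorphisms of sheaves), that it sends $-\otimes\pi^{*}L$ to $-\otimes L^{\chi}$, and then to set $\chi=0$; the section $\operatorname{det}(d)$ is the image of the canonical morphism in the derived category and so transports accordingly. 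Everything else is routine.
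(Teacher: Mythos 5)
Your proposal is correct and follows essentially the same route as the paper: both reduce via Remark~\ref{Remark: TautologicalNotUnique} to the case $G_{\text{tau}} = F_{\text{tau}} \otimes \pi^{*}L$ and then invoke the projection property of the determinant of cohomology, with the key point being that the relative Euler characteristic vanishes in degree $g-1$ so the factor $L^{\chi}$ disappears. The only difference is that the paper cites this projection property directly from \cite[Proposition~44(3)]{esteves01}, whereas you unpack it (correctly) by hand, including the transport of the section $\operatorname{det}(d)$.
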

\begin{proof}
	By Remark~\ref{Remark: TautologicalNotUnique}, $G_{\text{tau}}=F_{\text{tau}} \otimes \pi^{*}(N)$ for some line bundle $M$ on $\Jb_{g,n}^{d}(A,M)$. The result follows from the projection property of the determinant \cite[Proposition~44(3)]{esteves01}.
\end{proof}
In the lemma, it is important that $d=g-1$ for otherwise the conclusion would fail to hold.  Indeed,  loc.~cit.~shows $\operatorname{det}(\mathbb{R} \pi_{*} G_{\text{tau}})^{-1}$ equals $N^{\otimes d+1-g} \otimes \operatorname{det}(\mathbb{R} \pi_{*} F_{\text{tau}})^{-1}$.

\begin{definition} \label{Def: ThetaAM}
	For $n \geq 1$, the \textbf{theta divisor} $\ThDiv(A,M) \subset \Jb_{g,n}^{g-1}(A,M)$ is the effective Cartier divisor defined by $(\operatorname{det}(\mathbb{R}\pi_{*}F_{\text{tau}})^{-1}, \operatorname{det}(d))$ for $\mathcal{E}^{0} \stackrel{d}{\longrightarrow} \mathcal{E}^{1}$ a $2$-term complex of vector bundles that represents $\mathbb{R}\pi_{*} F_{\text{tau}}$.  The \textbf{theta divisor Chow class} $\ThDivCl(A,M) \in \Chow^{1}(\Jb_{g,n}^{g-1}(A,M))$ is  the fundamental class $[\ThDiv(A,M)]$.
\end{definition}

We conclude this discussion by describing the property that defines the theta divisor as a set.
\begin{lemma} \label{Lemma: ThetaSubschemeExists}
	For $n \geq 1$, the theta divisor $\ThDiv(A,M)$ is supported on the locus of points $(C, p_1, \dots, p_n; F) \in \Jb^{g-1}_{g,n}(A,M)$ with $H^{0}(C,F) \ne 0$.
\end{lemma}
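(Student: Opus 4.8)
The plan is to work locally on $\Mm{g}{n}^{(0)}$ and use the pointwise description of the determinant-of-cohomology section $\operatorname{det}(d)$. First I would recall that, by Definition~\ref{Def: Theta}, $\ThDiv(\phi)$ is the vanishing locus of the section $\operatorname{det}(d)$ of $\operatorname{det}(\mathbb{R}\pi_*F_{\text{tau}})^{-1}$, where $\mathcal{E}^0 \xrightarrow{d} \mathcal{E}^1$ is a two-term complex of vector bundles of equal rank representing $\mathbb{R}\pi_*F_{\text{tau}}$ étale-locally on $\Jb_{g,n}(\phi)$. The key point is that over a point $x = (C, p_1, \dots, p_n; F)$ the fiber of the complex $\mathcal{E}^0 \xrightarrow{d(x)} \mathcal{E}^1$ computes $\mathbb{R}\Gamma(C, F)$ in the derived category, i.e.\ $\ker(d(x)) = H^0(C, F)$ and $\operatorname{coker}(d(x)) = H^1(C, F)$; this is the standard base-change property for a representing complex of the pushforward of a flat family.

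Next I would observe that $\operatorname{det}(d)$ vanishes at $x$ if and only if the square matrix $d(x)$ is singular, which happens if and only if $\ker(d(x)) \ne 0$, i.e.\ if and only if $H^0(C, F) \ne 0$. Conversely, if $H^0(C, F) = 0$ then $d(x)$ is an isomorphism (since source and target have the same rank and the kernel vanishes, hence the cokernel vanishes too), so $\operatorname{det}(d)$ is a unit at $x$ and $x \notin \ThDiv(\phi)$. This gives the set-theoretic equality of $\supp \ThDiv(\phi)$ with $\{ x : H^0(C,F) \ne 0\}$. I should remark that the identification is legitimate because, by Lemma~\ref{Lemma: ThetaDivisorDefined}, neither $\operatorname{det}(\mathbb{R}\pi_*F_{\text{tau}})$ nor the section $\operatorname{det}(d)$ depends on the choice of tautological family or of representing complex, so the vanishing locus is well-defined as a closed subscheme of $\Jb_{g,n}(\phi)$ and can be checked on any étale chart.

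The one technical issue worth spelling out is \emph{why} a two-term representing complex with $\operatorname{rank}\mathcal{E}^0 = \operatorname{rank}\mathcal{E}^1$ exists étale-locally: this is exactly the argument already given in the paragraph preceding Lemma~\ref{Lemma: ThetaDivisorDefined} --- the morphism $\pi$ has relative cohomological dimension $1$, so $\mathbb{R}\pi_*F_{\text{tau}}$ is represented by a two-term complex of vector bundles, and the generic (indeed every) fiber has Euler characteristic $\chi(C,F) = \deg F + 1 - g = 0$ by Riemann--Roch, forcing $\operatorname{rank}\mathcal{E}^0 = \operatorname{rank}\mathcal{E}^1$. So there is no real obstacle here; the proof is a direct unwinding of the definition combined with cohomology-and-base-change. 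The only place demanding a little care is making sure the base-change identification $\ker d(x) = H^0(C,F)$ holds \emph{on the nose} rather than merely up to the spectral sequence --- but for a two-term complex of vector bundles representing a perfect complex of amplitude $[0,1]$ this is immediate, and one can cite \cite[Observation~43]{esteves01} or the standard reference for cohomology and base change.
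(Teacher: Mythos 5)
Your proposal is correct and follows essentially the same route as the paper: fix a two-term complex $\mathcal{E}^0 \xrightarrow{d} \mathcal{E}^1$ of equal-rank vector bundles representing $\mathbb{R}\pi_*F_{\text{tau}}$, note that by cohomology and base change the fiber of this complex at a point $(C,p_1,\dots,p_n;F)$ computes $H^0(C,F)$ and $H^1(C,F)$, and conclude that $\operatorname{det}(d)$ vanishes at that point precisely when $H^0(C,F)\ne 0$. The extra detail you supply (the Euler-characteristic argument for equal ranks and the independence of choices) is already established in the paper in the paragraphs preceding Definition~\ref{Def: Theta} and in Lemma~\ref{Lemma: ThetaDivisorDefined}, so nothing is missing.
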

\begin{proof}
	Fix a $2$-term complex of vector bundles $\mathcal{E}^{0} \stackrel{d}{\longrightarrow} \mathcal{E}^{1}$ that represents $\mathbb{R}\pi_{*} F_{\text{tau}}$, so that $\ThDiv(A,M) = \{ \operatorname{det}(d)=0 \}$. Given a point  $(C, p_1, \dots, p_n; F)$, write
	\begin{displaymath}
			\mathcal{E} \otimes k(\text{point}) := \mathcal{E}^{0} \otimes k(\text{point}) \stackrel{d \otimes 1}{\longrightarrow} \mathcal{E}^{1} \otimes k(\text{point})
	\end{displaymath}
	for the fiber of  $\mathcal{E}^{0} \to \mathcal{E}^{1}$ at $(C, p_1, \dots, p_n; F)$.  The point $(C, p_1, \dots, p_n; F)$ lies in $\ThDiv(A,M)$ if and only if the complex $\mathcal{E} \otimes k(\text{point})$ has nonzero cohomology, and because the formation of $\mathbb{R} \pi_{*} F_{\text{tau}}$ commutes with base change \cite[Theorem~8.3.2]{illusie}, the cohomology groups of $\mathcal{E} \otimes k(\text{point})$ are $H^{0}(C, F)$ and $H^{1}(C, F)$.
\end{proof}

We now restrict our constructions of $\Jb_{g,n}^d(A,M)$  and of the theta divisor $\ThDiv(A,M)$ to the treelike locus $\mathcal{M}_{g,n}^{\text{TL}} \subseteq \Mb_{g,n}$. 

\begin{definition} \label{Def: Jbar}
Given $\phi \in V_{g,n}^{\text{TL}}$,  we say that a family  of rank~$1$, torsion-free sheaves  of degree $g-1$ on a family of treelike curves is \textbf{$\phi$-(semi)stable} if the fibers are $\phi$-(semi)stable.  We define  $\Jb_{g,n}^{\text{pre}}(\phi)$ and  the \textbf{$\phi$-compactified universal Jacobian} $\Jb_{g,n}(\phi)$ in analogy with Definition~\ref{Def: moduliStacks}.
\end{definition}

From the results we have proved for $\Jb^{d}_{g, n}(A, M)$, we  deduce the following result about $\Jb_{g,n}(\phi)$.

\begin{corollary} \label{Cor: JbExists}
	Assume $n \geq 1$ and $\phi \in V_{g,n}^{\text{TL}}$ is nondegenerate.  Then the forgetful morphism $\Jb_{g,n}(\phi) \to \Mm{g}{n}^{\text{TL}}$ is representable, proper, and flat. In particular, $\Jb_{g,n}(\phi)$ is a separated Deligne-Mumford stack. Furthermore, $\Jb_{g,n}(\phi)$ is $k$-smooth.
\end{corollary}
\begin{proof}
	By combining Lemma~\ref{Lemma: Transitivity} and Lemma~\ref{existsAM} we deduce that there exists $A$ and $M$ such that $\phi(A, M)$ belongs to $\mathcal{P}(\phi)$, so the restriction to the treelike locus $\Jb^{g-1}_{g,n}(A, M)| \Mm{g}{n}^{\text{TL}}$ equals $\Jb_{g,n}(\phi)$.  The result thus follows from Proposition~\ref{existence1} and Lemma~\ref{flatandsmooth}.
\end{proof}

We proved Corollary~\ref{Cor: JbExists} using Lemma~\ref{Lemma: Transitivity}, and from the same lemma, we also deduce the following corollary.

\begin{corollary} \label{uniqueiso}
	Given nondegenerate stability parameters $\phi_1, \phi_2 \in V_{g, n}^{\text{TL}}$, there exists a unique line bundle $M \in W_{g,n}$ such that $F \mapsto F \otimes M$  defines an isomorphism
	\[
		\Jb_{g,n}(\phi_1) \to \Jb_{g,n}(\phi_2).
	\]
\end{corollary}
\begin{proof}
	This is a restatement of  Lemma~\ref{Lemma: Transitivity}.
\end{proof}

We will need the following property of the fibers of the forgetful morphism.

\begin{lemma} \label{fiberirreducible}
	For $n \geq 1$ and $\phi \in V_{g,n}^{\text{TL}}$ nondegenerate, the fibers of the forgetful morphism $\Jb_{g,n}(\phi) \to \Mm{g}{n}^{\text{TL}}$ are irreducible.
\end{lemma}
\begin{proof}
	This follows from Lemma~\ref{Lemma: ComponentGroup}.  In a fiber of $\Jb_{g,n}(\phi) \to \Mm{g}{n}^{\text{TL}}$, the locus of line bundles of fixed multidegree is $k$-smooth and connected, hence irreducible.  (The locus is a torsor for the generalized Jacobian, a semiabelian variety).  There is a unique $\phi$-stable multidegree of a line bundle by Lemma~\ref{Lemma: ComponentGroup}, so we conclude that the line bundle locus in a fiber is irreducible.  But the line bundle locus is also dense in its fiber because it is the $k$-smooth locus by  e.g.~the description of the completed local ring in \cite[Corollary~3.17]{kass12b} (where the ring appears as $R_1$). We conclude that the fiber is irreducible.
\end{proof}

Here we define the theta divisor in $\Jb_{g,n}(\phi)$.

\begin{definition} \label{Def: Theta} For $\phi \in V_{g,n}^{\text{TL}}$, we define $\ThDiv(\phi) \subset \Jb_{g,n}(\phi)$ to be the restriction of $\ThDiv(A,M)$ (from Definition~\ref{Def: ThetaAM}) to the treelike locus, for any $(A,M)$ such that $\phi(A,M) \in \mathcal{P}(\phi)$. We define $\ThDivCl(\phi) \in \Chow^{1}(\Jb_{g,n}(\phi))$ as  the fundamental class $[\ThDiv(\phi)]$.
\end{definition}

We conclude this section by characterizing when $\ThDiv(\phi) \to \Mm{g}{n}^{\text{TL}}$ is flat.

\begin{lemma} \label{Lemma: WhenIsPhiFlat?}
	Let $n \geq 1$. Given a nondegenerate $\phi_0 \in  V_{g,n}^{\text{TL}}$,  the natural projection   $\ThDiv(\phi) \to \Mm{g}{n}^{\text{TL}}$ is flat  if and only if $\phi_{\text{can}} \in \overline{\mathcal{P}}(\phi_0)$ (the closure of the unique stability polytope that contains $\phi_0$).
\end{lemma}
\begin{proof}
	Since $\ThDiv(\phi)$ is an effective divisor on $\Jb_{g,n}(\phi)$ and $\Jb_{g,n}(\phi) \to \mathcal{M}^{\text{TL}}_{g,n}$ is flat, the morphism $\ThDiv(\phi) \to \mathcal{M}^{\text{TL}}_{g,n}$ is flat if and only if $\ThDiv(\phi)$ does not contain a fiber of $\Jb_{g,n}(\phi) \to \Mm{g}{n}^{\text{TL}}$.  The theta divisor  $\ThDiv(\phi)$ is supported on the locus of sheaves that admit a nonzero section (Lemma~\ref{Lemma: ThetaSubschemeExists}), so to complete the proof, we need to show that, on every stable treelike curve $C$, there is a $\phi$-stable sheaf $F$ with $H^{0}(C, F)=0$ if and only if $\phi_{\text{can}} \in \overline{\mathcal{P}}(\phi)$.  From \cite[Proposition~3.6]{alexeev04} combined with \cite[Lemma 2.1]{beauville}, we deduce that, on a given $C$, there exists a $\phi$-stable line bundle $L$ with $H^{0}(C, L)=0$ if and only if every $\phi$-stable line bundle is $\phi_{\text{can}}$-semistable.  Thus the result follows from Lemma~\ref{Lemma: StabilityPolytope}.
\end{proof}

\subsection{Concluding remarks} \label{Subsection: Remarks}
    We conclude with some remarks, beginning with remarks about the families $\Jb_{g,n}(\phi)$ and their relation to families already existing in the  literature.  By definition $\Jb_{g,n}(\phi)$ is the moduli space of $\phi$-semistable rank~$1$, torsion-free sheaves.  Our definition of $\phi$-semistability (Definition~\ref{Def: StabilityInFamilies}) is a modification of the definition given by Oda--Seshadri \cite{oda79} (for degree $0$ rank~$1$, torsion-free sheaves on a single nodal curve), and our proof of Corollary~\ref{Cor: JbExists} shows that $\phi$-semistability can be (non-canonically) identified with slope semistablity in the sense of \cite{simpson94}.  In \cite{esteves01}, Esteves defined a stability condition, called quasi-stability, for sheaves on a family of curves.  Melo recently used Esteves' work to construct compactifications of $\J_{g,n}$ over $\Mb_{g,n}$, and showed that every $\Jb_{g, n}(\phi)$ can be realized as the restriction of one of her compactifications to $\Mm{g}{n}^{\text{TL}}$ \cite[Proposition~4.17]{melo16}.

    Earlier in \cite{melo11} Melo constructed a compactified universal Jacobian over $\Mb_{g,n}$.  Her compactification is different from the ones studied in this paper as e.g.~it is not  a Deligne--Mumford stack  (as the hypothesis to \cite[Proposition~8.3]{melo11} fails; her compactification is also not a moduli stack of torsion-free sheaves on stable curves, but the authors expect one can identify it with such a stack by an argument similar to \cite[Theorem~10.3.1]{pand96}).  Melo's paper builds upon a large body of work on constructing compactifications over $\Mb_{g,0}$ \cite{caporaso94, pand96, jarvis00, caporaso08, melo09}.  The compactifications constructed by Caporaso and Pandharipande have special significance because of a relation to moduli of stable pairs (in the sense of  Alexeev) that we explain in Section~\ref{Subsect: StablePair}.

	To describe the compactifications constructed by Caporaso and Pandharipande and their relations to the compactifications studied in this paper, consider the fiber $\overline{J}_{C}$ of their family over a curve $C$ that has two smooth irreducible components $C^{+}$ and $C^{-}$, each with positive genus.  This fiber is described in terms of  $\phi_{\text{can}}$-semistable sheaves in \cite[Theorem~10.3.1]{pand96}. There are no $\phi_{\text{can}}$-stable line bundles, and the $\phi_{\text{can}}$-semistable line bundles are the line bundles of bidegree $(i-1, g-i)$ and $(i, g-1-i)$.  The coarse moduli space of $\phi_{\text{can}}$-semistable sheaves is not  $\overline{J}_{C}$.  Rather $\overline{J}_{C}$ is the quotient of the coarse moduli space by the natural action of $\Aut(C)$.  (The appearance of the quotient by $\Aut(C)$ is related to the fact that in \cite{pand96, caporaso94} the authors construct a scheme with a morphism to the coarse scheme of $\Mb_{g, 0}$ rather than an algebraic stack with a morphism to $\Mb_{g, 0}$.)

For $\phi$ nondegenerate, the fiber $\overline{J}_{C}(\phi)$ of $\Jb_{g, n}(\phi) \to \Mb_{g, n}$ over $C$ is the moduli space of bidegree $(d_-, d_+)$ line bundles for $d_{-}, d_{+}$ depending on $\phi$.  For any two nondegenerate stability parameters $\phi_1$ and $\phi_2$, Corollary~\ref{uniqueiso} describes an isomorphism between $\overline{J}_{C}(\phi_1)$ and $\overline{J}_{C}(\phi_2)$, and \cite[7.2]{caporaso94} shows that the Caporaso--Pandharipande fiber $\overline{J}_{C}$ is isomorphic to the quotient of $\overline{J}_{C}(\phi_1)$ by $\Aut(C)$.

Corollary~\ref{uniqueiso} shows more generally that, as $\phi$ varies, the isomorphism class of $\Jb_{g,n}(\phi)$ does not vary.  The authors expect this result is an artifact of the fact that we study extensions of $\J_{g,n} \to \Mm{g}{n}$ to a family over $\Mm{g}{n}^{\text{TL}}$ rather than over all of $\Mb_{g,n}$, Indeed, the examples in \cite{melo14} suggest that there are many different schemes that extend the universal Jacobian to a family of moduli spaces over all of $\Mb_{g,n}$.

This brings us to the second topic of discussion: the stability space $V_{g,n}^{\text{TL}}$.  We have defined $V_{g,n}^{\text{TL}}$ so that it controls families over $\Mm{g}{n}^{\text{TL}}$. A consequence of  Theorem~\ref{Thm: wc} in Section~\ref{Section: Wallcrossing} is that the decomposition of $V_{g,n}^{\text{TL}}$ defined by the variation of the theta divisor \emph{essentially} coincides with the stability polytope decomposition, the only difference being that the theta divisor is constant on all the (finitely many) polytopes that contain $\phi_{\text{can}}$ in their closures (a consequence of Lemma~\ref{Lemma: WhenIsPhiFlat?}).

The authors believe that $\Mm{g}{n}^{\text{TL}}$ is the largest open substack $\mathcal{W} \subset \Mb_{g,n}$ that is a union of topological strata, with the property that the different theta divisors are essentially in bijection with the different extensions of $\J_{g,n}$ to a family over $\mathcal{W}$.

\section {Wall-crossing formula for the theta divisor} \label{Section: Wallcrossing}

In this section we study how the theta divisor class $\ThDivCl(\phi)$ varies with $\phi$ by proving a wall-crossing formula. The main result is Theorem~\ref{Thm: wc}, which we prove by applying the Grothendieck--Riemann--Roch theorem to a test curve. We then show in Corollary~\ref{Cor: wctranslated} how to deduce a wall-crossing formula for the translated of the theta divisor. We prove in Corollary~\ref{Cor: translatedtheta} that the translates of the theta divisor, together with line bundles pulled back from $\Mb_{g,n}$, form a set of generators for the Picard group of $\Jb_{g,n}$. Therefore our results describe how all codimension $1$ classes of $\Jb_{g,n}(\phi)$ vary in $\phi$. From now on we  assume $n \geq 1$. 

In the previous section we defined a stability space $V_{g,n}^{\text{TL}}$ (Definition~\ref{Def: StabilityInFamilies}) endowed with a stability polytope decomposition (Definition  \ref{Definition: PolytopesFamilyCurves}); for any nondegenerate $\phi \in V_{g,n}^{\text{TL}}$ we then defined a universal $\phi$-compactified Jacobian $\Jb_{g,n}(\phi)$ over $\mathcal{M}_{g,n}^{\text{TL}}$ (Definition~\ref{Def: Jbar}), we proved it is a $k$-smooth Deligne--Mumford stack (Corollary~\ref{Cor: JbExists}) and then defined a theta divisor $\ThDiv(\phi) \subset \Jb_{g,n}(\phi)$ as an effective Cartier divisor (Definition~\ref{Def: Theta}). We proved in Corollary~\ref{uniqueiso} that for any two nondegenerate $\phi_1, \phi_2 \in V_{g,n}^{\text{TL}}$, the compactified universal Jacobians  $\Jb_{g,n}(\phi_1)$ and $\Jb_{g,n}(\phi_2)$ are isomorphic, and the isomorphism is unique if it is chosen among those of the kind $F \mapsto F \otimes L$ for $L \in W_{g,n}$, (the subgroup of $\Pic^{0}(\Cb_{g,n}/\Mm{g}{n}^{\text{TL}})$ generated by the components $\mathcal{O}(\mathcal{C}^{\pm}_{i, S})$). In this section we will always identify $\Jb_{g,n}(\phi_1)$ and $\Jb_{g,n}(\phi_2)$ by means of this unique isomorphism.

To describe the difference $\ThDivCl(\phi_2) - \ThDivCl(\phi_1)$ we can assume that $\overline{\calP}(\phi_1)$ and  $\overline{\calP}(\phi_2)$ share a common facet, namely that $\overline{\calP}(\phi_1) \cap \overline{\calP}(\phi_2)$ generates a stability hyperplane in  $V_{g,n}^{\text{TL}}$ that we call $H=H(i,S,d)$  (see Lemma~\ref{Lemma: HyperplanesByTwoComponents}).  Therefore $\calP(\phi_1)$ and $\calP(\phi_2)$ have the same $\Gamma(i',S')$ coordinates except when $(i',S') = (i,S)$, and we fix the sign convention so that
\begin{equation} \label{signconvention}
\calP(\phi_2)(i,S) = \calP(\phi_1)(i,S) + (1, -1).
\end{equation}
 In more concrete terms, on any treelike curve not in $\Delta_{i,S}$, the $\phi_1$-stable sheaves coincide with the $\phi_2$-stable sheaves, and for a general fiber over $\Delta_{i,S}$ the $\phi_1$-stable sheaves are the line bundles of bidegree $(d, g-1-d)$, and the $\phi_2$-stable sheaves are the line bundles of bidegree  $(d+1,   g-2-d)$.

We can now formulate the wall-crossing formula for the theta divisor class.
\begin{theorem} \label{Thm: wc} Let $\phi_1$ and $\phi_2$ be nondegenerate stability parameters that belong to two stability polytopes of $V_{g,n}^{\text{TL}}$ that share a common facet, let $H(\Gamma(i,S), d) =: H(i,S,d)$ be the stability hyperplane containing that facet, and assume that the two stability polytopes are related by the sign convention \eqref{signconvention}. Then
\begin{align} \label{wc}
\ThDivCl(\phi_2) - \ThDivCl(\phi_1)  = &\left(\left\lfloor \phi_2^+(i,S)+ \frac{1}{2}\right\rfloor-i \right) \cdot \delta_{i,S} \notag \\   = & \left(d+1-i\right) \cdot \delta_{i,S} .
\end{align}
\end{theorem}
\noindent (As is customary, we have written $\delta_{i,S}$ for the pullback of the boundary divisor class along $\Jb_{g,n} \to \mathcal{M}_{g,n}^{\text{TL}}$).
\begin{proof}
 Choosing tautological bundles $\Tau(\phi_1)$ and $\Tau(\phi_2)$ as in Lemma \ref{Lemma: SectionExists}, we have \[F_{\text{tau}}(\phi_2) \cong F_{\text{tau}}(\phi_1) \otimes \mathcal{O}(\mathcal{C}_{i,S}^-).\]
 By Definition \ref{Def: Theta}, the left-hand side of \eqref{wc} is the first Chern class of the line bundle
 \begin{equation} \label{elldefinition}
 L:= \left(\det( \mathbb{R} \pi_* \Tau(\phi_1) \otimes \mathcal{O}(\mathcal{C}_{i,S}^-)) \right) ^{-1} \otimes \det( \mathbb{R} \pi_* \Tau(\phi_1)).
 \end{equation}
We claim that the line bundle $L$ is the pullback of $\mathcal{O}(\Delta_{i,S})^{\otimes c}$ for some $c \in \mathbb{Z}$. Indeed, over the complement of $\Delta_{i,S}$ the restriction of $\mathcal{O}( \mathcal{C}^-_{i,S})$ is trivial.  Since the formation of the determinant of cohomology commutes with base change, the restriction of $L$ to $\Jb_{g,n} - \Delta_{i,S}$ is also trivial,    which combined with the fact that the fibers of $\Jb_{g,n} \to \mathcal{M}_{g,n}^{\text{TL}}$ are irreducible (Lemma~\ref{fiberirreducible}), implies our claim.

 The integer $c$ is determined by computing the other two integers in the equality
 \begin{equation} \label{testexplanation}
 c \cdot \deg \left( \mathcal{O}(\Delta_{i,S})  _{| T} \right)  = \deg L_{| T}.
 \end{equation}

 Let $(\pi_T \colon C \to T, p_1, \ldots, p_n)$ be a {\bf test curve} for $\mathcal{M}_{g,n}^{\text{TL}}$: the pullback to a $k$-smooth curve $T \to \mathcal{M}_{g,n}^{\text{TL}}$ of the universal curve $\pi \colon \Cb_{g,n} \to \mathcal{M}_{g,n}^{\text{TL}}$ and of the universal sections.  Every such $T \to \mathcal{M}_{g,n}^{\text{TL}}$ lifts to a morphism  $T \to \Jb_{g,n}(\phi_1)$; equivalently there exists a family of $\pi_T$-fiberwise $\phi_1$-stable sheaves $F$ on $C$. This is a consequence of the existence of a section of the forgetful morphism $\Jb_{g,n}(\phi_1) \to \mathcal{M}_{g,n}^{\text{TL}}$, and we will construct several such sections in the beginning of Section~\ref{PullbackTheta},  see \eqref{muller} and \eqref{muller2}.

 From Proposition~\ref{Prop: TestcurveforJbar} below (a Grothendieck--Riemann--Roch calculation) we deduce that the right-hand side of \eqref{testexplanation} equals
\begin{equation} \label{prop1explanation}
\deg L_{| T}= - \deg\left( \pi_{T *} \left( \left( \ch F (\mathcal{C}^-_{i,S})- \ch(F) \right) \cap \ \td C \right) \right).
\end{equation}

In Construction~\ref{construction} we produce an explicit test curve for $\mathcal{M}_{g,n}^{\text{TL}}$ whose intersection with the divisor $\Delta_{i,S}$ is the class of one point:
\begin{equation}\label{transversalint} \deg \left( \mathcal{O}(\Delta_{i,S})_{|T} \right)= 1.\end{equation}  We then prove in Lemma~\ref{Lemma: IntersectionCalculation} that, on this test curve, Equation~\eqref{prop1explanation} becomes
\begin{equation} \label{final}
 \deg L_{| T} = -\deg\left( F_{| \mathcal{C}^-_{i,S}}\right) +(g-i) = -\left(g-1-d \right) + (g-i) = \left(d+1-i \right).
\end{equation}
Combining Equation~\eqref{testexplanation} with  \eqref{transversalint} and \eqref{final} gives $c=d+1-i$, which concludes the proof of Theorem~\ref{Thm: wc}.
 \end{proof}

\begin{remark} Using the classical Riemann--Roch formula, we can express the coefficient of $\delta_{i,S}$ in Formula~\eqref{wc} as the Euler characteristic of a line bundle.

Writing $C = C^{+} \cup C^{-}$ for a general fiber of $\pi^{-1}(\Delta_{i,S}) \to \Delta_{i,S}$, we have the equalities
\begin{displaymath}
d+1-i =  \deg (\Tau(\phi_1)_{| {C}^+}) + 1-i=   \chi({C}^+, \Tau(\phi_1)) .
\end{displaymath}
Formula~\eqref{wc} can therefore be written in the suggestive form
\begin{equation}
\ThDivCl(\phi_2) - \ThDivCl(\phi_1) =    \chi({C}^+, \Tau(\phi_1)) \cdot \delta_{i,S}=  - \chi(C^-, \Tau(\phi_2))\cdot \delta_{i,S}.
\end{equation}
\end{remark}

We now present some easy corollaries of Formula~\eqref{wc}.

As a consequence of Lemma~\ref{Lemma: WhenIsPhiFlat?}, when $\phi_{\text{can}} \in \overline{\calP}(\phi_1), \overline{\calP}(\phi_2)$, the theta divisors $\ThDiv(\phi_1)$ and $\ThDiv(\phi_2)$ are flat on $\mathcal{M}_{g,n}^{\text{TL}}$, therefore both are the closure of the theta divisor on the universal Jacobian over smooth curves, hence they coincide. Theorem~\ref{Thm: wc} provides the converse implication (assuming as usual that $\overline{\calP}(\phi_1)$ and $\overline{\calP}(\phi_2)$ share a common facet).
\begin{corollary} \label{cor: closurecan} If  $\ThDiv(\phi_1)$ equals  $\ThDiv( \phi_2)$, then $\phi_{\text{can}}$ belongs to $\overline{\calP}(\phi_1)$ and $\overline{\calP}(\phi_2)$.
\end{corollary}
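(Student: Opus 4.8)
The plan is to prove the contrapositive: assuming $\phi_{\text{can}}$ does \emph{not} lie in the closure of one (equivalently both, since $\mathcal{P}_1$ and $\mathcal{P}_2$ share the facet $H=H(i,S,d)$ and $\phi_{\text{can}}$ lies on neither $\mathcal{P}_1$ nor $\mathcal{P}_2$ but possibly on $H$) of the two polytopes, I will show that $\ThDiv(\phi_1)\neq \ThDiv(\phi_2)$ by showing that their fundamental classes $\ThDivCl(\phi_1)$ and $\ThDivCl(\phi_2)$ already differ. First I would invoke Theorem~\ref{Thm: wc}, which gives
\[
	\ThDivCl(\phi_2)-\ThDivCl(\phi_1)=(d-i)\cdot\delta_{i,S}.
\]
Since $\delta_{i,S}\neq 0$ in $\Chow^1(\Jb_{g,n}(\phi))$ (it is the pullback of a nonzero boundary class, and pullback along the flat surjective morphism $\Jb_{g,n}(\phi)\to\Mm{g}{n}^{(0)}$ is injective on the relevant classes), it suffices to show that $d-i\neq 0$ whenever $\phi_{\text{can}}$ is not in the closure of $\mathcal{P}_1$ (and of $\mathcal{P}_2$).

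The key step is therefore a purely combinatorial computation inside the one-dimensional affine space $V(\Gamma(i,S))$. By Lemma~\ref{Lemma: HyperplanesByTwoComponents} and Corollary~\ref{Corollary: StabilityDeterminedByTwoComponents}, the relevant data is the image of everything under the projection to $V(\Gamma(i,S))\cong\mathbb{R}$. Using the description in Example~\ref{Example: TwoComponentExample}, the wall $H(i,S,d)$ corresponds to the half-integer point where the bidegree jumps, $\mathcal{P}_1(i,S)$ is the open segment whose $\phi$-stable bidegree on the general fiber over $\Delta_{i,S}$ is $(d-1,g-d)$ and $\mathcal{P}_2(i,S)$ the adjacent segment with stable bidegree $(d,g-1-d)$. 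On the other hand, the canonical parameter $\phi_{\text{can}}(\Gamma(i,S))$ has first coordinate $\phi_{\text{can}}(v_1)=\frac{\deg_{v_1}\omega_C}{2}=g(v_1)-1+\frac{1}{2}\cdot\#(\Gamma_0\cap\Gamma_0^c)=i-\frac{1}{2}$ (using that $\Gamma(i,S)$ has a single edge joining the two vertices, so $\#(\Gamma_0\cap\Gamma_0^c)=1$). Thus $\phi_{\text{can}}(\Gamma(i,S))$ is the half-integer point $i-\tfrac12$, which is a vertex of the stability polytope decomposition of $V(\Gamma(i,S))$; the two segments having this point in their closure are $\mathcal{P}(d)$ with $d=i$ on one side and its neighbor on the other. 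Consequently, $\phi_{\text{can}}(\Gamma(i,S))\in\overline{\mathcal{P}_1(i,S)}\cup\overline{\mathcal{P}_2(i,S)}$ precisely when the jumping integer $d$ equals $i$. Contrapositively, if $\phi_{\text{can}}$ is not in the closure of $\mathcal{P}_1$ or $\mathcal{P}_2$, then $d\neq i$, so $d-i\neq 0$ and $\ThDivCl(\phi_2)\neq\ThDivCl(\phi_1)$, hence a fortiori $\ThDiv(\phi_1)\neq\ThDiv(\phi_2)$.

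I expect the main obstacle to be the bookkeeping in identifying the exact position of $\phi_{\text{can}}$ relative to the labeling convention $\mathcal{P}_2(i,S)=\mathcal{P}_1(i,S)+(1,-1)$ fixed before Theorem~\ref{Thm: wc}: one must be careful that "$\phi_{\text{can}}$ in the closure of $\mathcal{P}_1$ or of $\mathcal{P}_2$" translates correctly to "$d=i$", and that the boundary case where $\phi_{\text{can}}$ lies exactly \emph{on} the wall $H$ is handled (there it lies in the closure of both, consistent with $d=i$). A minor additional point is to confirm $\delta_{i,S}$ is nonzero on $\Jb_{g,n}(\phi)$, which follows since $\Delta_{i,S}$ is a nonempty divisor on $\Mm{g}{n}^{(0)}$ and $\Jb_{g,n}(\phi)\to\Mm{g}{n}^{(0)}$ is flat with nonempty fibers, so $\delta_{i,S}$ pulls back to the class of a nonempty divisor; alternatively, this is implicitly used already in the statement of Theorem~\ref{Thm: wc} since otherwise the wall-crossing formula would be vacuous. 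Once these identifications are pinned down, the corollary is immediate.
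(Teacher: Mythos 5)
Your overall strategy --- pass to the contrapositive, invoke Theorem~\ref{Thm: wc}, and locate $\phi_{\text{can}}$ inside the one-dimensional space $V(\Gamma(i,S))$ --- is exactly how the paper intends this ``easy corollary'' to be read (the paper offers no proof beyond the phrase ``Theorem~\ref{Thm: wc} provides the converse implication''). Your computation $\phi_{\text{can}}(\Gamma(i,S))(v_1)=i-\tfrac12$ is correct, as is the identification ``$\phi_{\text{can}}$ lies on the wall separating $\mathcal{P}_1(i,S)$ from $\mathcal{P}_2(i,S)$ if and only if $d=i$,'' and the deduction that $d\neq i$ forces $\ThDivCl(\phi_1)\neq\ThDivCl(\phi_2)$ and hence $\ThDiv(\phi_1)\neq\ThDiv(\phi_2)$ is sound. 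So what you have genuinely proved is: if $\ThDiv(\phi_1)=\ThDiv(\phi_2)$ then $d=i$, i.e.\ $\phi_{\text{can}}$ lies on the hyperplane spanned by the common facet, i.e.\ the \emph{$(i,S)$-projection} of $\phi_{\text{can}}$ lies in $\overline{\mathcal{P}_1(i,S)}\cap\overline{\mathcal{P}_2(i,S)}$.

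There is, however, a gap in passing from this to the stated conclusion, concentrated in your opening parenthetical ``(equivalently both\dots)'' and in the sentence asserting ``$\phi_{\text{can}}(\Gamma(i,S))\in\overline{\mathcal{P}_1(i,S)}\cup\overline{\mathcal{P}_2(i,S)}$ precisely when $d=i$.'' First, that $\cup$ must be a $\cap$: the half-integer $i-\tfrac12$ lies in the \emph{union} of the two closed intervals whenever $d\in\{i-1,i,i+1\}$; only the intersection condition is equivalent to $d=i$. More seriously, ``$\phi_{\text{can}}\notin\overline{\mathcal{P}_1}\cap\overline{\mathcal{P}_2}$'' in $V_{g,n}$ is \emph{not} equivalent to failure in the $(i,S)$-coordinate. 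By Lemma~\ref{Lemma: PhiDeterminedByTwoComponents} and Corollary~\ref{Corollary: StabilityDeterminedByTwoComponents}, $\overline{\mathcal{P}_1}$ is a product of closed intervals indexed by all admissible pairs $(i',S')$, and $\mathcal{P}_2$ differs from $\mathcal{P}_1$ only in the $(i,S)$-factor; so $\phi_{\text{can}}$ can fail to lie in $\overline{\mathcal{P}_1}$ (hence in the intersection) because some \emph{other} coordinate $\phi_{\text{can}}(\Gamma(i',S'))$ misses $\overline{\mathcal{P}_1(i',S')}$, while still $d=i$. In that case the wall-crossing term vanishes and the two theta divisors coincide (they differ by $(d-i)\,\pi^{*}\Delta_{i,S}$ as divisors), so your contrapositive does not close. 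This is not entirely your fault: read literally, the corollary asserts more than Theorem~\ref{Thm: wc} can deliver, and with at least two admissible pairs $(i,S)$ one can choose $\mathcal{P}_1$ realizing the scenario above. The honest conclusion --- and the one your computation actually establishes --- is that $\phi_{\text{can}}$ lies on the hyperplane containing the common facet $H$; you should either prove that version or explicitly flag the discrepancy rather than assert the equivalence in the parenthetical.
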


If $\ThDiv(\phi)$ is flat on $\mathcal{M}_{g,n}^{\text{TL}}$, in particular it is reduced. In the following corollary we determine all stability parameters whose associated theta divisor is reduced.
\begin{corollary} \label{Corollary: Nonreduced}
	If $\phi \in V_{g,n}^{\text{TL}}$ is nondegenerate, then $\ThDiv(\phi)$ is reduced if and only if there exists a stability polytope $\mathcal{Q}$ such that $\overline{\mathcal{P}}(\phi) \cap \overline{\mathcal{Q}} \ne \emptyset$ and $\phi_{\text{can}} \in \overline{\mathcal{Q}}$.
\end{corollary}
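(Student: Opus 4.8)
The plan is to determine the prime components of $\ThDiv(\phi)$ together with their multiplicities, and then convert the condition ``all multiplicities $\le 1$'' into the stated combinatorial statement. Write $\pi\colon\Jb_{g,n}(\phi)\to\Mm{g}{n}^{(0)}$ for the structure morphism. First I would check, using Lemma~\ref{Lemma: ThetaSubschemeExists} together with a dimension count and the flatness and properness of $\pi$ (Proposition~\ref{Prop: JbExists}), that every prime divisor occurring in $\ThDiv(\phi)$ is either (i) the closure $\ThDiv^{0}$ of the classical theta divisor over $\J_{g,n}$, or (ii) an entire fibral divisor $\pi^{-1}(\Delta_{i,S})$ over one of the boundary divisors $\Delta_{i,S}$: a component dominating $\Mm{g}{n}^{(0)}$ restricts over $\J_{g,n}$ to a component of the fibrewise theta divisor of a Jacobian, hence equals $\ThDiv^{0}$ and occurs with multiplicity $1$; a component not dominating $\Mm{g}{n}^{(0)}$ must, for dimension reasons, be a whole fibre over a boundary divisor, and it cannot lie over $\Delta_{\text{irr}}$ because a general degree $g-1$ line bundle on an integral one-nodal curve has no sections. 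Since $\pi$ is flat with integral general fibre over $\Delta_{i,S}$, the divisor $\pi^{-1}(\Delta_{i,S})$ is generically reduced, with class $\delta_{i,S}$. Thus $\ThDiv(\phi)=\ThDiv^{0}+\sum_{(i,S)}m_{i,S}(\phi)\,\pi^{-1}(\Delta_{i,S})$ for integers $m_{i,S}(\phi)\ge 0$, and, the ambient stack being smooth hence normal, $\ThDiv(\phi)$ is reduced if and only if $m_{i,S}(\phi)\le 1$ for every $(i,S)$.

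Next I would compute $m_{i,S}(\phi)$, which depends only on the unique $\phi$-stable bidegree $(d(i,S),g-1-d(i,S))$ on a general curve of $\Delta_{i,S}$ (Lemma~\ref{Lemma: ComponentGroup}). On one hand $m_{i,S}(\phi)=0$ means precisely that $\ThDiv(\phi)$ contains no fibre over $\Delta_{i,S}$; fixing all components of $\phi$ other than the one over $\Gamma(i,S)$ to be canonical, Lemma~\ref{Lemma: WhenIsPhiFlat?} (together with Beauville's lemma \cite[Lemma~(2.1)]{beauville}) shows this happens exactly when $d(i,S)\in\{i-1,i\}$. On the other hand, the canonical isomorphisms $\Jb_{g,n}(\phi_1)\cong\Jb_{g,n}(\phi_2)$ of Section~\ref{Section: Wallcrossing} lie over $\Mm{g}{n}^{(0)}$ and restrict to the identity over $\J_{g,n}$, so they identify $\ThDiv^{0}$ and each $\pi^{-1}(\Delta_{j,T})$ with their counterparts; comparing with the decomposition above and using the linear independence of the $\delta_{j,T}$, the wall-crossing formula of Theorem~\ref{Thm: wc} translates into $m_{i,S}(\phi_2)-m_{i,S}(\phi_1)=d-i$ and $m_{j,T}(\phi_2)=m_{j,T}(\phi_1)$ for $(j,T)\ne(i,S)$ whenever $\phi_1,\phi_2$ lie in polytopes whose common wall is $H(i,S,d)$ (in the normalization of Theorem~\ref{Thm: wc}, where the $\phi$-stable bidegree over $\Delta_{i,S}$ is $(d-1,g-d)$ on the $\phi_1$-side and $(d,g-1-d)$ on the $\phi_2$-side). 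Telescoping away from a balanced chamber then gives $m_{i,S}(\phi)=\binom{\max(0,\,d(i,S)-i,\,i-1-d(i,S))+1}{2}$; in particular $m_{i,S}(\phi)\le 1$ if and only if $d(i,S)\in\{i-2,i-1,i,i+1\}$. Hence $\ThDiv(\phi)$ is reduced if and only if $d(i,S)\in\{i-2,i-1,i,i+1\}$ for every admissible $(i,S)$.

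Finally I would match this with the existence of $\mathcal{Q}$. By Lemmas~\ref{Lemma: PhiDeterminedByTwoComponents} and~\ref{Lemma: HyperplanesByTwoComponents}, under the coordinate identification $V_{g,n}\cong\prod_{\#\operatorname{Vert}(\Gamma)=2}V(\Gamma)$ the stability polytope decomposition of $V_{g,n}$ is the product of the one-dimensional decompositions described in Example~\ref{Example: TwoComponentExample}, so by Corollary~\ref{Corollary: StabilityDeterminedByTwoComponents} a stability polytope and its closure are the products of the corresponding two-vertex pieces and their closures. Consequently, for $\mathcal{P}\ni\phi$ and an arbitrary stability polytope $\mathcal{Q}$, one has $\overline{\mathcal{P}}\cap\overline{\mathcal{Q}}\ne\emptyset$ exactly when for every $(i,S)$ the $\Gamma(i,S)$-projections of $\mathcal{P}$ and $\mathcal{Q}$ are equal or adjacent (their $d$-invariants differ by at most $1$), while $\phi_{\text{can}}\in\overline{\mathcal{Q}}$ exactly when every $\Gamma(i,S)$-projection of $\mathcal{Q}$ has $d$-invariant in $\{i-1,i\}$ — here one uses $\phi_{\text{can}}(\Gamma(i,S))(v_{1})=i-\tfrac12$, the common endpoint of the chambers $d=i-1$ and $d=i$. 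Assembling a candidate $\mathcal{Q}$ componentwise, which is permissible by Corollary~\ref{Corollary: StabilityDeterminedByTwoComponents}, such a $\mathcal{Q}$ exists if and only if for every $(i,S)$ the set $\{i-1,i\}$ contains a value at distance $\le 1$ from $d(i,S)$, that is, $d(i,S)\in\{i-2,i-1,i,i+1\}$ — precisely the reducedness criterion of the previous paragraph. (This also recovers the easy implication noted after Corollary~\ref{cor: closurecan}.)

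The step I expect to be the main obstacle is the identification of the multiplicities $m_{i,S}(\phi)$: Beauville's lemma detects \emph{whether} a fibre occurs in $\ThDiv(\phi)$, but the precise multiplicity — which can strictly exceed the generic value of $h^{0}$ on the fibre, the source of the binomial coefficient — is cleanest to extract from the wall-crossing formula, and the delicate point is to verify that the decomposition $\ThDiv(\phi)=\ThDiv^{0}+\sum m_{i,S}\,\pi^{-1}(\Delta_{i,S})$ and the classes $\delta_{i,S}$ are transported correctly by the identifications $\Jb_{g,n}(\phi_{1})\cong\Jb_{g,n}(\phi_{2})$ underlying Theorem~\ref{Thm: wc}, so that the coefficient $d-i$ genuinely records the change in $m_{i,S}$.
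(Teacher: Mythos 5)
Your proposal is correct and follows exactly the route the paper intends (the paper states this corollary without proof, as an ``easy corollary'' of Theorem~\ref{Thm: wc} and Lemma~\ref{Lemma: WhenIsPhiFlat?}): decompose $\ThDiv(\phi)$ as the horizontal theta component plus fibral components $\pi^{-1}(\Delta_{i,S})$, anchor all multiplicities at $0$ for a polytope whose closure contains $\phi_{\text{can}}$ via the flatness criterion, and propagate by wall-crossing to get $m_{i,S}(\phi)=\binom{\max(0,\,d(i,S)-i,\,i-1-d(i,S))+1}{2}$, so that reducedness is equivalent to $d(i,S)\in\{i-2,i-1,i,i+1\}$ for all $(i,S)$, which matches the adjacency condition on polytopes via Lemmas~\ref{Lemma: PhiDeterminedByTwoComponents} and~\ref{Lemma: HyperplanesByTwoComponents}. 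The only point worth flagging is the promotion of the Chow-class wall-crossing identity to an identity of divisors, which you correctly isolate as the delicate step; it can be handled either by your linear-independence argument or, perhaps more cleanly, by Lemma~\ref{itself} (the theta divisor is the unique effective divisor in its linear equivalence class), which makes $\ThDiv(\phi_2)=\ThDiv(\phi_1)+(d-i)\,\pi^{-1}(\Delta_{i,S})$ an equality of divisors directly.
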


\begin{proof} If we let $\ThDiv(\phi)_{\text{red}}$ be the reduced structure on the theta divisor, we have the inclusion of subschemes $\ThDiv(\phi)_{\text{red}} \subseteq \ThDiv(\phi)$. We claim that $\ThDiv(\phi)$ consists of an irreducible component dominant over $\mathcal{M}_{g,n}^{\text{TL}}$, and of other components supported on the inverse image of the boundary divisors $\Delta_{i,S}$. Indeed, the restriction of $\ThDiv(\phi)$ to $\mathcal{M}_{g,n}$ is irreducible (because the theta divisor of a smooth curve is irreducible), so its closure in $\Jb_{g,n}(\phi)$ is the unique irreducible component that dominates $\mathcal{M}_{g,n}^{\text{TL}}$. If $Z$ is any other component of $\ThDiv(\phi)$, then $Z$ has codimension $1$ in $\Jb_{g,n}(\phi)$ because $\ThDiv(\phi)$ is an effective Cartier divisor, and because the image of $Z$ in $\mathcal{M}_{g,n}^{\text{TL}}$ does not intersect $\mathcal{M}_{g,n}$, the dimension of a general fiber of $Z$ equals $g$. We deduce that every fiber of $Z$ has dimension $g$; because every fiber of $\Jb_{g,n}(\phi) \to \mathcal{M}_{g,n}^{\text{TL}}$ has dimension $g$ and  by Lemma~\ref{fiberirreducible} it is irreducible, we conclude that in fact $Z$ is supported over the inverse image of some boundary divisor $\Delta_{i,S}$.

The component dominant over $\mathcal{M}_{g,n}^{\text{TL}}$ is isomorphic to $\ThDiv(\phi_0)$ for any $\phi_0 \in \mathcal{Q}$ (for $\mathcal{Q}$ any stability polytope such that $\phi_{\text{can}} \in \overline{\mathcal{Q}}$), and the latter is reduced by Lemma~\ref{Lemma: WhenIsPhiFlat?}. We can then write an equality of divisor classes
\[
\ThDivCl(\phi) = \ThDivCl(\phi)_{\text{red}} + \sum_{(i,S)} a_{i,S} \cdot \delta_{i,S}
\]
(where once again we have written $\delta_{i,S}$ for the pullback of the boundary divisor class by $\Jb_{g,n}(\phi_i) \to \mathcal{M}_{g,n}^{\text{TL}}$).
If $\ThDiv(\phi)$ is reduced, then so are all its irreducible components, which implies that all coefficients $a(i,S)$ are either $0$ or $1$. The converse implication is provided in Lemma~\ref{Lemma: inclusiondivisors}.

It follows from our main Theorem~\ref{Thm: wc} that the coefficients $a_{i,S}$ are either $0$ or $1$ precisely when $\overline{\mathcal{P}}(\phi)$ and $\overline{\mathcal{Q}}$ have a common facet, where $\overline{\mathcal{Q}}$ is any of the polytopes containing $\phi_{\text{can}}$ in its closure.
\end{proof}

In the proof of Corollary~\ref{Corollary: Nonreduced} we used the following lemma, for which we provide a proof as we could not find a suitable reference.
\begin{lemma} \label{Lemma: inclusiondivisors} Let $\overline{\mathcal{X}}$ be a $k$-smooth proper Deligne--Mumford stack, and $D_1$ and $D_2$ be two effective divisors with $D_1$ a closed substack of $D_2$. Then the inclusion  induces an isomorphism of $D_1$ and $D_2$ if and only if the divisor classes $[D_1]$ and $[D_2]$ are equal.
\end{lemma}
\begin{proof}
 We first reduce to the setting of divisors on a projective scheme by picking an \'{e}tale cover $\Mb \to \overline{\mathcal{X}}$ with $\Mb$ smooth and projective.   We have an exact sequence of coherent sheaves
	\[
		0 \to \mathcal{O}_{D_2}(-D_1) \to \mathcal{O}_{D_2} \to \mathcal{O}_{D_1} \to 0.
	\]
	Now pick an ample line bundle $A$ on $\Mb$ and consider the Hilbert polynomials $p_1(t)$ and $p_{2}(t)$ respectively associated to $D_1$ and $D_2$.  The polynomial $p_i$ has degree equal to $\dim ({\Mb})-1$ and leading term equal to the degree of $D_i$ (computed with respect to $A$).  Since $D_1$ is linearly equivalent to $D_2$ by hypothesis, $\mathcal{O}_{D_1}$ and $\mathcal{O}_{D_2}$ have the same degree, and so $p_2-p_1$ has degree strictly less than $\dim ({\Mb})-1$.  By additivity $p_2-p_1$ equals the Hilbert polynomial of $\mathcal{O}_{D_{2}}(-D_1)$, and we conclude that this last Hilbert polynomial has degree strictly less than $\dim ({\Mb})-1$.

	This is only possible if $\mathcal{O}_{D_2}(-D_1)$ equals zero.  Indeed, $\mathcal{O}_{D_2}(-D_1)$ is locally principal, so if $\mathcal{O}_{D_2}(-D_1)$ was nonzero, then its support would have dimension  $\dim (\Mb)-1$.  Since $\mathcal{O}_{D_2}(-D_1)=0$, the inclusion of $D_1$ in $D_2$ is an isomorphism.
\end{proof}

We conclude this section by proving the auxiliary results that we used to prove Theorem~\ref{Thm: wc}.
\begin{proposition} \label{Prop: TestcurveforJbar} Let $(\pi_T \colon C \to T, p_1, \ldots, p_n, F)$ be a test curve for $\Jb_{g,n}(\phi)$. Then the following equality
\[
(c_1 \left(\mathbb{R} \pi_* F (\mathcal{C}_{i,S}^-) \right)- c_1 ( \mathbb{R} \pi_* F) ) \cap [T] =  \pi_{T *} \left( \left( \ch F (\mathcal{C}^-_{i,S})- \ch(F) \right) \cap \ \td C \right)
\]
holds in the Chow group of $0$-cycles on $T$.
\end{proposition}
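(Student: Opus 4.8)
The plan is to obtain the stated identity as the part lying in the group of $0$-cycles of the Grothendieck--Riemann--Roch equality applied to the family $\pi_T \colon C \to T$, carried out for the two $T$-flat sheaves $F(\mathcal{C}^-_{i,S}) = F \otimes \mathcal{O}(\mathcal{C}^-_{i,S})$ and $F$ and then subtracted. First I would observe that $\pi_T$ is a proper flat family of nodal curves over the smooth curve $T$, hence a proper local complete intersection morphism, so the Grothendieck--Riemann--Roch theorem applies in the form
\[
	\ch(\mathbb{R}\pi_{T*}\mathcal{G}) \cdot \td(T) = \pi_{T*}\bigl( \ch(\mathcal{G}) \cdot \td(C) \bigr)
\]
for every $T$-flat coherent sheaf $\mathcal{G}$ on $C$, where $\td(C)$ is understood through the relative lci structure when the total space $C$ is singular. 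Since $\pi_T$ is flat and proper and $F$ is $T$-flat, both $\mathbb{R}\pi_{T*}F$ and $\mathbb{R}\pi_{T*}F(\mathcal{C}^-_{i,S})$ are perfect complexes concentrated in degrees $0$ and $1$, represented by two-term complexes of vector bundles as in Section~\ref{Subsection: Rep}, so their Chern characters are defined and $\ch_1(-) = c_1(-)$ (the latter being $c_1$ of the determinant, as in Definition~\ref{Def: Theta}). Subtracting the displayed formula for $\mathcal{G} = F(\mathcal{C}^-_{i,S})$ from the one for $\mathcal{G} = F$ gives an equality, in $\Chow^{\bullet}(T)_{\mathbb{Q}}$, between $\bigl(\ch(\mathbb{R}\pi_{T*}F(\mathcal{C}^-_{i,S})) - \ch(\mathbb{R}\pi_{T*}F)\bigr)\cdot\td(T)$ and $\pi_{T*}\bigl((\ch F(\mathcal{C}^-_{i,S}) - \ch F)\cdot\td(C)\bigr)$.

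Next I would extract the component of each side in $\Chow^{1}(T)$, the group of $0$-cycles. On the left, write the difference of Chern characters as $D_0 + D_1$ with $D_0 \in \Chow^{0}(T)_{\mathbb{Q}}$ and $D_1 \in \Chow^{1}(T)_{\mathbb{Q}}$; since $\td(T) = 1 + \td_1(T)$ and $\Chow^{2}(T) = 0$, the $\Chow^{1}(T)$-part of the left-hand side is $D_1 + D_0\cdot\td_1(T)$. The key point is that $D_0 = 0$: the divisor $\mathcal{C}^-_{i,S}$ pulled back to $C$ is contained in the finitely many fibers of $\pi_T$ over $T \cap \Delta_{i,S}$, hence has fiberwise total degree zero, so both $F$ and $F(\mathcal{C}^-_{i,S})$ restrict to sheaves of total degree $g-1$ on every fiber and, by Riemann--Roch, both derived pushforwards have rank (fiberwise Euler characteristic) zero. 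Therefore the $\Chow^{1}(T)$-part of the left-hand side equals $D_1 = \bigl(c_1(\mathbb{R}\pi_{T*}F(\mathcal{C}^-_{i,S})) - c_1(\mathbb{R}\pi_{T*}F)\bigr)\cap[T]$, which is precisely the left-hand side of the proposition.

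On the right, since $F(\mathcal{C}^-_{i,S})$ and $F$ both have rank $1$, the class $\ch F(\mathcal{C}^-_{i,S}) - \ch F$ has vanishing degree-zero part, so its product with $\td(C)$ lies in $\Chow_{\le 1}(C)_{\mathbb{Q}}$ with $1$-dimensional component equal to $c_1(\mathcal{O}(\mathcal{C}^-_{i,S})) = [\mathcal{C}^-_{i,S}]$; as $\pi_T$ contracts $\mathcal{C}^-_{i,S}$ to finitely many points, $\pi_{T*}[\mathcal{C}^-_{i,S}] = 0$, and so $\pi_{T*}\bigl((\ch F(\mathcal{C}^-_{i,S}) - \ch F)\cdot\td(C)\bigr)$ already lies in $\Chow^{1}(T)$, coming only from the $0$-dimensional component on $C$. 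Equating the $\Chow^{1}(T)$-components of the two sides of the GRR identity then yields the proposition.

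I expect the only delicate point to be the meaning of $\td(C)$ when the total space $C$ of a test curve is singular, and correspondingly the form of GRR one invokes; all the remaining steps are dimension counts on the surface $C$ and the curve $T$. If one prefers to stay with the smooth-morphism case, one can instead run the argument with the relative Grothendieck--Riemann--Roch theorem for $\pi_T$, writing $\td(C) = \td(T_{\pi_T})\cdot\pi_T^*\td(T)$ and absorbing the correction term via the projection formula together with the vanishing $\pi_{T*}[\mathcal{C}^-_{i,S}] = 0$; the outcome is identical.
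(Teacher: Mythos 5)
Your proof is correct and follows essentially the same route as the paper: apply Grothendieck--Riemann--Roch to $\pi_T$ and use the vanishing of $\ch_0(\mathbb{R}\pi_{T*}F)$ and $\ch_0(\mathbb{R}\pi_{T*}F(\mathcal{C}^-_{i,S}))$ (since the fiberwise degree is $g-1$, so the Euler characteristic is zero by Riemann--Roch) to identify the $0$-cycle part of $\ch(\cdot)\cap\td(T)$ with $c_1(\cdot)\cap[T]$. Your extra remarks about the right-hand side landing in $\Chow^1(T)$ and about interpreting $\td(C)$ for the singular total space are sensible but not needed; the paper handles the latter via the Baum--Fulton--MacPherson Todd class $\tau_C(\mathcal{O}_C)$ in the subsequent lemma.
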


 \begin{proof}

The $0$-th and $1$-st higher pushforwards of $F$ and of $F (\mathcal{C}^-_{i,S})$ under $\pi_T$ are sheaves of the same rank. Indeed, taking derived pushforwards commutes with base change, and the $0$-th and $1$-st cohomology of $F$ and $F (\mathcal{C}^-_{i,S})$ on the fiber of a geometric point in $T$ have the same dimension by the Riemann--Roch formula for curves, since the fiberwise degree is $g-1$. Therefore we have that both the degree-$0$ Chern characters
\begin{displaymath}
\ch_0 (\mathbb{R} \pi_* F), \quad \ch_0 (\mathbb{R} \pi_* F(\mathcal{C}^-_{i,S}))
\end{displaymath}
vanish, so we deduce the following equality in the Chow group of $0$-cycles on $T$:
\begin{equation}\label{pregrr}
c_1 \left(\mathbb{R} \pi_* F (\mathcal{C}_{i,S}^-) \right)- c_1 ( \mathbb{R} \pi_* F) ) \cap [T]=  \left( \ch  \mathbb{R} \pi_* F (\mathcal{C}^-_{i,S}) - \ch \mathbb{R} \pi_* F  \right) \cap \ \td T.
\end{equation}

Applying the Grothendieck-Riemann-Roch formula to $\pi_T$, we have
\begin{equation} \label{grr}
  \left( \ch  \mathbb{R} \pi_* F (\mathcal{C}^-_{i,S}) - \ch \mathbb{R} \pi_* F  \right) \cap \ \td T = \pi_{T *}  \left( \left( \ch F (\mathcal{C}^-_{i,S})- \ch(F) \right) \cap \ \td C \right),
\end{equation}
and the statement follows by combining Equations~\eqref{pregrr} and \eqref{grr}.
\end{proof}

\begin{construction} \label{construction} For each pair $(i,S)$ as in Notation~\ref{Definition: AdmissiblePair}, we construct a test curve $(\pi_T \colon C \to T, p_1, \ldots, p_n)$ whose intersection with $\Delta_{i,S}$ is the class of one point,
and whose general fiber is in $\Delta_{i, S \setminus \{1\}}$, as shown in Figure~\ref{figure: testcurve}. (The special cases $i=0$ and $|S|=2$ are left to the reader).

\begin{figure}[ht]
\centering

\begin{tabular}{cc}
\includegraphics[scale=0.4]{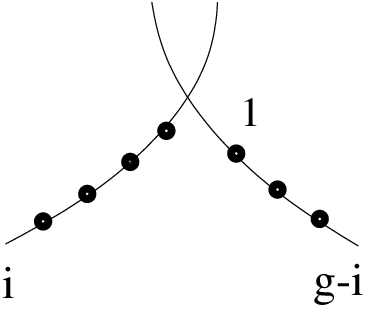} \ \ \ \ & \includegraphics[scale=0.4]{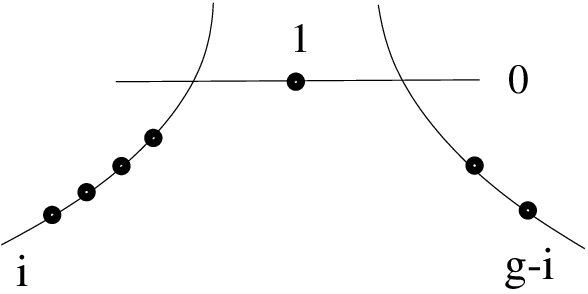}
\end{tabular}
\caption{The general fiber and the special fiber of the test curve}
\label{figure: testcurve}
\end{figure}

Fix a general genus $g-i$ pointed curve $(T, S^c \cup \{1 \})$ and a general genus $i$ pointed curve $(T', S \cup \{ \bullet\} \setminus \{ 1\} )$.   In $T \times T$ the diagonal intersects the locus \[T \times \{ p_k\colon  k \in S^{c} \cup \{1\}\}\] at the points $\{ ( p_k, p_k) \colon k \in S^{c} \cup \{ 1\} \}$, and we define the blow-up of these points to be  $\widetilde{C}_{1} \to T \times T$.  We then define $\widetilde{C}_{2}$ to be $T \times T'$.

The diagonal map $\Delta \colon T \to T \times T$ induces a morphism $s_1 \colon T \to \widetilde{C}_{1}$, and we define the morphism $s_2 \colon T \to \widetilde{C}_{2}$ as the constant $\bullet$ section of the first projection map.  We then define $C$ to be the following pushout (which exists by \cite[Theorem~5.4]{ferrand03})
\begin{displaymath}
	\begin{CD}
		T \cup T		 @>s_1 \cup s_2>>  	\widetilde{C}_1 \cup \widetilde{C}_{2} 	\\
		@VVV				@V\nu VV 							\\
		T			@>j >>	C.
	\end{CD}
\end{displaymath}
The projection onto the first component defines a morphism $\widetilde{C}_{1} \cup \widetilde{C}_{2} \to T$ that induces a morphism $\pi_T \colon C \to T$ by the universal property of pushouts.  The morphism $\pi_T$ inherits pairwise disjoint sections $p_1, \ldots, p_n$,
whose images lie in smooth locus of $\pi_{T}$.

 The family $\pi_T$ then defines a morphism $T \to \mathcal{M}_{g,n}^{\text{TL}}$ whose intersection with $\Delta_{i,S}$ is the class of one point (by construction the total space $C$ of the family is $k$-smooth at the unique node of type $\Delta_{i,S}$).
\end{construction}

\begin{lemma} On the test curve $(\pi_T \colon C \to T, p_1, \ldots, p_n)$ defined in Construction~\ref{construction}, we have\label{Lemma: IntersectionCalculation}
\begin{equation} \label{degreecomputation}
\deg\left( \pi_{T *} \left( \left( \ch F (\mathcal{C}^-_{i,S})- \ch(F) \right) \cap \ \td C \right) \right) = \deg\left( F_{| \mathcal{C}^-_{i,S}}\right) -(g-i)
\end{equation}
\end{lemma}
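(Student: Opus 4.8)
The plan is to unwind the left-hand side of \eqref{degreecomputation} using the structure of the test curve built in Construction \ref{construction}, reducing the pushforward along $\pi_T$ to an intersection-theoretic computation on the total space $C$. First I would expand $\ch F(\mathcal{C}^-_{i,S}) - \ch(F) = \ch(F)\cdot(\ch \mathcal{O}(\mathcal{C}^-_{i,S}) - 1)$ and observe that, since $\mathcal{C}^-_{i,S}$ is an effective Cartier divisor on $C$, one has $\ch \mathcal{O}(\mathcal{C}^-_{i,S}) - 1 = [\mathcal{C}^-_{i,S}] - \tfrac{1}{2}[\mathcal{C}^-_{i,S}]^2 + \dots$ (as a class supported on $\mathcal{C}^-_{i,S}$). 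Because we only need the degree-$0$ part of the pushforward to $T$ (a $1$-dimensional base), only terms of codimension $2$ in $C$ survive, and the computation localises entirely on the curve $\mathcal{C}^-_{i,S} \subset C$. Concretely, $\left(\ch F(\mathcal{C}^-_{i,S}) - \ch(F)\right)\cap \td C$ in the relevant degree becomes a sum of two contributions: the term $\ch_0(F)\cdot[\mathcal{C}^-_{i,S}]\cdot c_1(\td C)$ and the term $\left(\ch_1(F) - \tfrac12[\mathcal{C}^-_{i,S}]\right)\cdot[\mathcal{C}^-_{i,S}]$, the first of which restricts to $\tfrac12 \deg(\omega_{\mathcal{C}^-_{i,S}/T})$-type data and the second to $\deg(F|_{\mathcal{C}^-_{i,S}}) + \tfrac12 \deg(\mathcal{O}(\mathcal{C}^-_{i,S})|_{\mathcal{C}^-_{i,S}})$-type data.

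The next step is to identify the geometry of $\mathcal{C}^-_{i,S}$ for this particular test curve. By construction $\mathcal{C}^-_{i,S}$ (the component of $\pi_T^{-1}(\Delta_{i,S})$ not carrying the markings $S$) is the image under $\nu$ of a single fibre of $\widetilde{C}_2 = T\times T'$ over the point of $T$ lying over the node, i.e.\ it is a copy of $T'$, a general genus-$i$ curve; alternatively one must check which of the two components of the special fibre plays the role of $\mathcal{C}^-$ given the convention $S\not\subseteq$ its markings — here $T'$ carries $S\cup\{\bullet\}\setminus\{1\}$ while the marking $1\in S$ sits on the $T$-component, so a small bookkeeping check pins down $\mathcal{C}^-_{i,S}$. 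Since $T'$ is a fibre of a trivial product and is glued to the rest of $C$ at one point, its self-intersection $[\mathcal{C}^-_{i,S}]^2$ inside $C$ equals $-1$ (one transverse node, and $C$ smooth there), so $\deg\left(\mathcal{O}(\mathcal{C}^-_{i,S})|_{\mathcal{C}^-_{i,S}}\right) = -1$. Plugging these in, $\pi_{T*}$ of the codimension-$2$ class collects $\deg(F|_{\mathcal{C}^-_{i,S}})$, the correction $-\tfrac12$ from the self-intersection, and the Todd contribution $\tfrac12\deg(\omega_{\mathcal{C}^-_{i,S}})$; by adjunction on the genus-$i$ curve $\mathcal{C}^-_{i,S}$ glued at one point, $\deg\omega_{\mathcal{C}^-_{i,S}} + [\mathcal{C}^-_{i,S}]^2 = 2i-2$, i.e.\ $\tfrac12\deg\omega_{\mathcal{C}^-_{i,S}} = i - \tfrac12$. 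Summing, the two halves involving $[\mathcal{C}^-_{i,S}]^2$ cancel against each other appropriately and one is left with $\deg(F|_{\mathcal{C}^-_{i,S}}) - (g-i)$ — matching the right-hand side after noting that the constant $g-i$ arises as $(g-i)$ via $\chi$ of the genus-$(g-i)$ complementary component (equivalently, rewriting $i - \dots$ in terms of $g-i$ using $\deg(F|_{\mathcal{C}^+}) + \deg(F|_{\mathcal{C}^-}) = g-1$ is \emph{not} needed here since the statement is phrased purely in terms of $\mathcal{C}^-_{i,S}$).

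The main obstacle I anticipate is the careful bookkeeping of the Todd-class and self-intersection terms: there are several factors of $\tfrac12$ (from $\td C$ in relative degree $1$, from the $\tfrac12[\mathcal{C}^-_{i,S}]^2$ in the Chern character of the divisor line bundle, and from adjunction), and getting the signs and the precise identification of $\mathcal{C}^-$ versus $\mathcal{C}^+$ right is where an error would most naturally creep in. A secondary subtlety is justifying that all higher-codimension terms vanish after pushforward to the one-dimensional base $T$ and that the blow-up modifications in $\widetilde{C}_1$ (needed only to separate the sections $p_k$, $k\in S^c\cup\{1\}$, from the diagonal) do not affect $\mathcal{C}^-_{i,S}$, which lives entirely in the $\widetilde{C}_2$-part; this should follow because the blow-up centres lie on $\widetilde{C}_1$ away from the gluing locus, but it merits an explicit sentence. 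Once these local contributions are assembled, \eqref{degreecomputation} follows by linearity of $\pi_{T*}$.
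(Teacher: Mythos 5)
There is a genuine gap, and it lies exactly where you flagged the ``small bookkeeping check'': you have identified $\mathcal{C}^-_{i,S}$ with the wrong component of the special fibre, and this changes the answer. On the test curve of Construction \ref{construction} the special fibre (over the point of $T$ where the gluing section meets the marking $p_1$) is a chain $\widehat{T}\cup E_1\cup T'$, where $\widehat{T}$ is the proper transform of the fibre $\{p_1\}\times T$ in the blown-up surface $\widetilde{C}_1$, $E_1$ is the exceptional curve over $(p_1,p_1)$, and $T'$ is the corresponding fibre of $\widetilde{C}_2$. Stable reduction pushes the marking $1$ onto $E_1$, so the part of the special fibre carrying all of $S$ is $E_1\cup T'$, of genus $i$: that is the restriction of $\mathcal{C}^{+}_{i,S}$. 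Hence $\mathcal{C}^{-}_{i,S}$ restricts to $\widehat{T}$, a genus $g-i$ curve lying entirely in $\widetilde{C}_1$ and disjoint from the gluing locus and from $\widetilde{C}_2$ --- not to $T'$ as you assert. (Relatedly, the blow-up centres $(p_k,p_k)$ lie on the diagonal, i.e.\ exactly on the gluing locus; the blow-up at $(p_1,p_1)$ is what produces the special fibre in $\Delta_{i,S}$, so it is not an irrelevant modification confined to $\widetilde{C}_1$ ``away from the gluing locus''.)

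This matters because in your localized computation the constant term is governed by the genus of $\mathcal{C}^{-}_{i,S}$ itself: for a divisor $D$ with $D^2=-1$ the codimension-two class is $\deg(F|_{D})+\tfrac12 D^2-\tfrac12 D\cdot K=\deg(F|_{D})+D^2-(g(D)-1)=\deg(F|_{D})-g(D)$. Taking $D=T'$ of genus $i$ you would obtain $\deg(F|_{\mathcal{C}^{-}_{i,S}})-i$, not $\deg(F|_{\mathcal{C}^{-}_{i,S}})-(g-i)$; your closing appeal to ``$\chi$ of the genus-$(g-i)$ complementary component'' is not something a computation supported on $D$ can produce, and it is the symptom of the misidentification. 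With the correct identification $D=\widehat{T}$ (genus $g-i$, self-intersection $-1$ in $\widetilde{C}_1$ because it is the proper transform of a fibre through one blown-up point) the same computation does give $\deg(F|_{\mathcal{C}^{-}_{i,S}})-(g-i)$, and since $\widehat{T}$ avoids the singular locus of $C$ the whole calculation legitimately takes place on the smooth surface $\widetilde{C}_1$. That is essentially how the paper proceeds: it writes $\td(C)=\nu_*\bigl(\td(\widetilde{C}_1\sqcup\widetilde{C}_2)\bigr)-j_*\bigl(\td(T)\bigr)$ via the pushout exact sequence and computes componentwise, finding that the $\widetilde{C}_2$- and $j(T)$-contributions vanish precisely because $\mathcal{C}^{-}_{i,S}$ does not meet them.
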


\begin{proof}

Since we are after the calculation of the degree of a $0$-cycle, for this proof we will work \emph{modulo numerical equivalence}. In the calculations we will follow standard notation: we omit writing fundamental classes, we write $(a,b)$ for the ruling on a product, and we write $[\pt]$ for the class of a point.

We claim that the Todd class of $C$ equals
\begin{equation} \label{two}
\td(C)= \nu_*(\td(\tilde{C_1} \times \tilde{C_2} )) - j_{ *} (\td (T)) .
\end{equation}

Indeed, the Todd class of the singular variety $C$ is defined in \cite[Chapter 18]{fulton} as $\tau_C (\mathcal{O}_C)$, for $\tau_C$ a group homomorphism from the $K$-theory of coherent sheaves on $C$ to the rational Chow group of $C$. Formula \eqref{two} follows by applying $\tau_C$ to the sequence of sheaves
\[
0 \to \mathcal{O}_C \to \nu_*(\mathcal{O}_{\tilde{C_1} \times \tilde{C_2}}) \to j_* \mathcal{O}_T \to 0,
\]
which is exact by the pushout construction.

 Applying the formulas for the Todd class of a product and of a blow-up of a $k$-smooth surface at a point, we compute
\begin{equation}\label{three}
\begin{cases} \td(T)= &   1 + (1-g+i) [\pt], \\  \td(\tilde{C}_1)= & 1 + (1-g+i, 1-g+i) - \frac{1}{2}  \sum_{k \in S^c \cup \{1 \} } E_k + (1-g+i)^2 [\pt], \\  \td(\tilde{C}_2) = & 1+ (1-g+i, 1-i) + (1-g+i)(1-i) [\pt]. \end{cases}
\end{equation}
(Here the $E_k$'s denote the exceptional fibers of the surface $\tilde{C_1}$.) For example, here is how one can proceed to calculate $\td(\tilde{C}_1)$ (the case of $\tilde{C}_2$ is similar). The surface $\tilde{C}_1$ is obtained as the blow-up of $T \times T$ at a set of closed points (see Construction~\ref{construction}). Using the multiplicativity of the Todd class, we deduce
\begin{equation} \label{uno}
\td(T \times T) = (1+ (1-g+i, 1)) \cdot (1+ (1, 1-g+i)) = 1 + (1-g+i, 1-g+i) + (1-g+i)^2 [\pt].
\end{equation}
For the degree $1$ component of the Todd class, we apply the formula for the first Chern class of the blow-up of a $k$-smooth surface
\begin{equation} \label{due}
\td_1(\tilde{C}_1) = \frac{c_1(\tilde{C}_1)}{2} = \frac{c_1(T \times T) -   \sum_{k \in S^c \cup \{1 \} } E_k}{2}.
\end{equation}
The degree $2$ component of the Todd class of a $k$-smooth surface remains unchanged after blowing up closed points, so we have
\begin{equation} \label{tre}
\td_2(\tilde{C}_1) = \td_2(T \times T),
\end{equation}
and our formula for $\td(\tilde{C}_1)$ in \eqref{three} is obtained by combining \eqref{uno}, \eqref{due} and \eqref{tre}.

 We now denote by $\nu_1$ and $\nu_2$  the restrictions of $\nu$ to the two components of the normalization $\tilde{C} = \tilde{C}_1 \sqcup \tilde{C}_2$. The Chern characters of the relevant line bundles on $\tilde{C}_1$ are
\begin{equation} \label{cc1}
\begin{cases}
\ch \nu_1^* \mathcal{O}(\mathcal{C}^+_{i,S})= & 1 + E_{1} - \frac{1}{2} [\pt], \\
\ch \nu_1^* \mathcal{O}(\mathcal{C}^-_{i,S})= & 1 + (1,0) - E_{1} - \frac{1}{2} [\pt];
\end{cases}
\end{equation}
and those on $\tilde{C}_2$ are
\begin{equation} \label{cc2}
\begin{cases}
\ch \nu_2^* \mathcal{O}(\mathcal{C}^+_{i,S})= &  1+ (1,0), \\
\ch \nu_2^* \mathcal{O}(\mathcal{C}^-_{i,S})= & 1.
\end{cases}
\end{equation}

 To calculate the degree on the left-hand side of \eqref{degreecomputation}, we compute
 \begin{align} \label{a} \deg \left( \left( \ch \nu_1^*F(\mathcal{C}^-_{i,S}) - \ch  \nu_1^* F \right)  \cap\td \tilde{C}_1 \right)&=   (1-g+i) -\frac{1}{2} + \deg(F_{|\mathcal{C}^-_{i,S}})-\frac{1}{2}\\  \nonumber &= \deg(F_{|\mathcal{C}^-_{i,S}}) -(g-i) , \end{align}
 \begin{equation} \label{b} \deg \left( \left( \ch \nu_2^*F(\mathcal{C}^-_{i,S}) - \ch  \nu_2^* F\right)  \cap\td \tilde{C}_2 \right)= 0, \end{equation}   \begin{equation} \label{c} \deg \left(\left( \ch j^*F(\mathcal{C}^-_{i,S})- \ch  j^* F\right) \cap\td T \right)=   0;  \end{equation}
where the last two expressions vanish because the curve $\mathcal{C}^-_{i,S}$ has empty intersection with $\nu_2(\tilde{C}_2)$ and with $j(T)$.

Altogether, taking $\eqref{a} + \eqref{b} - \eqref{c}$, we find that the degree on the left-hand side of \eqref{degreecomputation} equals $\deg\left( F_{| \mathcal{C}^-_{i,S}}\right) - (g-i)$. This concludes the proof of Lemma~\ref{Lemma: IntersectionCalculation}.
\end{proof}

\subsection{Wall-crossing of codimension $1$ cycles} \label{Sec: translated}

Here we explain how our description of the manner in which $\ThDivCl(\phi)$ depends on $\phi$ describes how $\Pic(\Jb_{g,n}(\phi))$ depends on $\phi$. We analyze $\Pic(\Jb_{g,n})$ by relating it to the Jacobian of the generic curve of $\Mb_{g,n}$, which we will denote by $J_{\eta}$.

\begin{lemma} \label{Lemma: PicJbarToPicEta}
	The restriction map $\Pic(\Jb_{g,n}(\phi)) \to \Pic(J_{\eta})$ induces an isomorphism
	\[
		\Pic( \Jb_{g,n}(\phi)) / \Pic( \Mb_{g,n}) \cong \Pic( J_{\eta}).
	\]
\end{lemma}
\begin{proof}
	Injectivity is a special case of the Seesaw theorem.  Surjectivity follows from the fact that $\Jb_{g,n}(\phi)$ is regular.  Indeed, because every line bundle can be represented as the difference of effective Cartier divisors, it is enough to show that every effective Cartier divisor on $J_{\eta}$ is the restriction of an effective Cartier divisor on $\Jb_{g,n}(\phi)$.  A given effective Cartier divisor $D$ is the restriction of its Zariski closure $\overline{D}$, and $\overline{D}$ is Cartier because $\Jb_{g,n}(\phi)$ is regular (hence locally factorial).
\end{proof}

\begin{lemma} \label{Lemma: NeronSeveri}
	The N\'{e}ron--Severi group $\operatorname{NS}(J_{\eta}) := \Pic(J_{\eta})/\Pic^{0}(J_{\eta})$ is freely generated by the class  $\theta_{\eta}$ of the theta divisor.
\end{lemma}
\begin{proof}
	The subset of curves with $\operatorname{NS}(J) \ne \mathbb{Z}$ is contained in a countable union of proper closed subsets \cite[Corollary~17.5.2]{bl}, so it does not contain the generic point $\eta$.  We conclude that $\operatorname{NS}(J_{\eta})=\mathbb{Z}$.  Writing $\theta_{\eta} = n \cdot \gamma$ for a generator $\gamma$, we have $\theta_{\eta}^g/g! = n^g \cdot \gamma^g/g!$, but $\gamma^g/g$ is an integer by the Riemann--Roch formula and $\theta_{\eta}^g/g!=1$, so $n = \pm 1$ and $\theta_{\eta}$ is a generator.
\end{proof}

\begin{definition}
	Given a fiberwise degree $0$ element $L \in \Pic( \Cb_{g,n})$ on the universal curve, we write $\operatorname{T}_{L}^{*}\ThDivCl(\phi)$ for the pullback of $\ThDivCl(\phi)$ by the rational map $\operatorname{T}_{L} \colon \Jb_{g,n}(\phi) \dashrightarrow \Jb_{g,n}(\phi)$ given by translation by $L$. We write $\operatorname{T}_{L}^*(\theta_{\eta}) \in \Pic( J_{\eta})$ for the restriction of $\operatorname{T}_{L}^{*}\ThDivCl(\phi)$ to the generic fiber.
\end{definition}
A priori the map $\operatorname{T}_{L}$ is only a rational map since the tensor product $F \otimes L$ with a $\phi$-stable sheaf $F$ can fail to be $\phi$-semistable. (One could show that this map in fact has no indeterminacy over $\Mm{g}{n}^{\text{TL}}$. This can be achieved by modifying $F \otimes L$ similarly to what we do in Section~\ref{PullbackTheta} for the problem of extending the section $s_{\vec{d}}$).

\begin{lemma} \label{Lemma: AlgEquivToZero}
	Every element of $\Pic^{0}(J_{\eta})$ equals $\operatorname{T}^{*}_{L}(\theta_{\eta})-\theta_{\eta}$ for some $L \in \Pic^{0}(C_{\eta})$
\end{lemma}
\begin{proof}
	The group $\Pic^{0}(J_{\eta})$ is the group of points of the dual abelian variety $J^{\vee}_{\eta}$. The homomorphism $J_{\eta} \to J^{\vee}_{\eta}$ defined by $L \mapsto \operatorname{T}^{*}_{L}(\theta_{\eta}) - \theta_{\eta}$ is an isomorphism because $\theta_{\eta}$ is a principal polarization. We deduce the desired result by taking the induced map on points.
\end{proof}

\begin{corollary} \label{Cor: translatedtheta}
	The group $\Pic(\Jb_{g,n}(\phi))$ is generated by $\pi^{*}\Pic(\Mb_{g,n})$ and by the elements of the form $\operatorname{T}^{*}_{L}(\ThDivCl(\phi))$ for $L$ a fiberwise degree $0$ element of $\Pic(\Cb_{g,n})$.
\end{corollary}
\begin{proof}
	Let  $M \in \Pic(\Jb_{g,n}(\phi))$ be given.  By Lemma~\ref{Lemma: NeronSeveri}, there exists an integer $k$ such that $k \cdot \ThDivCl(\phi)$ and $M$ have the same image in $\operatorname{NS}(J_{\eta})$.  The restriction of the difference $M- k \cdot \ThDivCl(\phi)$ to $J_{\eta}$ thus lies in $\Pic^{0}(J_{\eta})$, therefore it equals $\operatorname{T}^{*}_{L}(\theta_{\eta})-\theta_{\eta}$ for some $L \in \Pic^0(C_{\eta})$ by Lemma~\ref{Lemma: AlgEquivToZero}.  We conclude that
	\[
		M = (k-1) \cdot \ThDivCl(\phi) + \operatorname{T}_{L}^*(\ThDivCl(\phi)) + \pi^{*}(N) \text{ for some $N \in \Pic(\Mb_{g,n})$}
	\]
	by applying Lemma~\ref{Lemma: PicJbarToPicEta}.
\end{proof}

We use the following lemma to describe how a set of generators for $\Pic(\Jb_{g,n}(\phi))$ changes as $\phi$ changes.
\begin{corollary} \label{Cor: wctranslated}
Under the same hypotheses of Theorem~\ref{Thm: wc}, let $L$ be an element of $\Pic(\Cb_{g,n})$ of fiberwise degree $0$. Then
\begin{align}
\operatorname{T}_{L}^{*}(\ThDivCl(\phi_2)) - \operatorname{T}_{L}^{*}(\ThDivCl(\phi_1))  = &\left(\left\lfloor \phi_2^+(i,S)+ \frac{1}{2}\right\rfloor-i \right) \cdot \delta_{i,S} \notag \\   = & \left(d+1-i\right) \cdot \delta_{i,S} .
\end{align}
\end{corollary}
\begin{proof}
	Apply $\operatorname{T}_{L}^{*}$ to the wall-crossing Formula~\eqref{wc} in Theorem~\ref{Thm: wc} and observe that $\operatorname{T}_{L}^{*}$ acts as the identity on classes pulled back from $\Mb_{g,n}$.
\end{proof}

\section{Pullbacks of the theta divisor to $\overline{\mathcal{M}}_{g,n}$} \label{PullbackTheta}

In this section we study the pullback of the theta divisor to the moduli space of curves $\overline{\mathcal{M}}_{g,n}$ (or $\Mm{g}{n}^{\text{TL}}$), and compare our results with the existing literature. As in Section~\ref{Section: Wallcrossing}, we will assume that $n\geq1$ throughout. 

Let $\vec{d}=(d_1, \ldots, d_n)$ be a vector of integers such that $\sum_{j=1}^n d_j = g-1$. For any such vector the rule
\begin{equation}\label{muller} (\pi_T \colon C \to T, p_1, \ldots, p_n) \mapsto \left(\pi_T \colon C \to T, p_1, \ldots, p_n; \ \mathcal{O}_C(\mathcal{D})\right),\end{equation}
with $\mathcal{D}:=d_1 p_1 + \ldots + d_n p_n$, defines a section \begin{equation} \label{defsdopen} s_{\vec{d}} \colon \Mm{g}{n} \to \mathcal{J}_{g,n} \end{equation} of the forgetful map $\mathcal{J}_{g,n} \to \mathcal{M}_{g,n}$.

For $\phi_{\vec{d}}$ the nondegenerate parameter defined by  \begin{equation} \label{phimuller} \phi_{\vec{d}}(i,S)= (\phi^+(i,S), \phi^-(i,S)): = \left(\sum_{j \in S} d_j, g-1- \sum_{j \in S} d_j \right)=:(d_S, g-1- d_S) , \end{equation} the family of line bundles $\mathcal{O}_C(\mathcal{D})$  is fiberwise $\phi_{\vec{d}}$-stable and the rule \eqref{muller} defines a section $s_{\vec{d}}$ of the forgetful map $\Jb_{g,n}(\phi_{\vec{d}}) \to \mathcal{M}^{\text{TL}}_{g,n}$.

More generally, for any nondegenerate stability parameter $\phi \in V_{g,n}^{\text{TL}}$, we define the divisor
\begin{equation} \label{muller2}
\mathcal{D}(\phi):=d_1 p_1 + \ldots + d_n p_n + \sum_{(i,S)} \left( d_S -  \left\lfloor \phi^+(i,S) + \frac{1}{2}  \right\rfloor \right) \cdot \mathcal{C}_{i,S}^+.
\end{equation}
The family of line bundles $ \mathcal{O}_C(\mathcal{D}(\phi))$ is fiberwise $\phi$-stable by construction, and the rule \eqref{phimuller} defines a section $s_{\vec{d}} \colon \mathcal{M}^{\text{TL}}_{g,n} \to \Jb_{g,n}(\phi)$ of the forgetful map. This section is the unique one extending \eqref{defsdopen} because $\Jb_{g,n}(\phi)$ is separated.

In the following we compute the pullback by $s_{\vec{d}}$ of the theta class. Because $\overline{\mathcal{M}}_{g,n} \setminus \mathcal{M}_{g,n}^{\text{TL}}$ has codimension $2$,  the pullback to the treelike locus induces a well-defined group homomorphism $\pic(\Jb_{g,n}) \to \pic(\Mb_{g,n})$.

Recall that the integral Picard group of $\overline{\mathcal{M}}_{g,n}$ is generated (freely when $g \geq 3$) by the first Chern class of the Hodge bundle $\lambda$, the first Chern classes of the cotangent line bundles to the $j$-th marking $\psi_j$, the boundary strata classes $\delta_{i,S}$ and $\delta_{irr}$.

\begin{theorem} \label{pullback} The pullback of $\ThDivCl(\phi_{\vec{d}})$ from $\Jb_{g,n}( \phi_{\vec{d}})$ to $\overline{\mathcal{M}}_{g,n}$ is given by
\begin{equation} \label{Eq: easypullback}
s_{\vec{d}}^*\  \ThDivCl(\phi_{\vec{d}})= - \lambda + \sum_{j =1}^n {{d_j+1}\choose{2}} \cdot \psi_j.
\end{equation}
More generally, for any nondegenerate $\phi \in V_{g,n}^{\text{TL}}$, we obtain the equality
\begin{equation} \label{Eq: pullbackthetadiv}
s_{\vec{d}}^*\  \ThDivCl(\phi)= - \lambda + \sum_{j=1}^n {{d_j+1}\choose{2}} \cdot \psi_j + \sum_{(i,S)} \left( {{ \left\lfloor \phi^+(i,S)+ \frac{1}{2} \right\rfloor - i+1}\choose{2}}- {{ d_S- i+1}\choose{2}} \right) \cdot \delta_{i,S}.
\end{equation}
\end{theorem}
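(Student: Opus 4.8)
The plan is to reduce $s_{\vec d}^*\,\ThDivCl(\phi)$ to a determinant-of-cohomology class on $\mathcal{M}_{g,n}^{(0)}$, establish the special case \eqref{Eq: easypullback} by a direct Grothendieck--Riemann--Roch computation, and obtain \eqref{Eq: pullbackthetadiv} from it by iterating the wall-crossing formula of Theorem~\ref{Thm: wc}. Since a tautological family restricts on each fiber to the sheaf it parametrizes, the Seesaw theorem (Remark~\ref{Remark: TautologicalNotUnique}) gives $(s_{\vec d}\times 1)^*F_{\text{tau}}\cong\mathcal{O}_{\mathcal{C}}(\mathcal{D}(\phi))\otimes\pi^*L$ for some $L\in\Pic(\mathcal{M}_{g,n}^{(0)})$; because the relative Euler characteristic of a degree $g-1$ sheaf is $0$, this twist leaves the determinant of cohomology unchanged, so by Definition~\ref{Def: Theta}
\[
 s_{\vec d}^*\,\ThDivCl(\phi)=-\,c_1\!\left(\det\mathbb{R}\pi_*\mathcal{O}_{\mathcal{C}}(\mathcal{D}(\phi))\right).
\]

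For the base case $\phi=\phi_{\vec d}$ we have $\mathcal{D}(\phi_{\vec d})=\sum_j d_j p_j$, and I would apply Grothendieck--Riemann--Roch to the family of nodal curves $\pi\colon\mathcal{C}_{g,n}^{(0)}\to\mathcal{M}_{g,n}^{(0)}$: writing $c_1$ for the first Chern class of the associated K-theory class, extracting the degree-one part of $\pi_*(\ch\mathcal{O}(\mathcal{D})\cdot\td(\mathcal{T}_\pi))$ gives
\[
 c_1\!\left(\mathbb{R}\pi_*\mathcal{O}(\mathcal{D})\right)=\pi_*\!\left(\tfrac12\mathcal{D}^2-\tfrac12\mathcal{D}\cdot c_1(\omega_\pi)\right)+\pi_*\!\left(\tfrac{1}{12}\big(c_1(\omega_\pi)^2+[\mathrm{Sing}_\pi]\big)\right),
\]
where the second summand equals $\lambda$ by Mumford's relation $12\lambda=\kappa_1+\delta$. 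Combined with the standard identities $\pi_*(p_j^2)=-\psi_j$, $\pi_*(p_j\cdot c_1(\omega_\pi))=\psi_j$ and $\pi_*(p_j\cdot p_k)=0$ for $j\ne k$, the first summand is $-\tfrac12\sum_j(d_j^2+d_j)\psi_j=-\sum_j\binom{d_j+1}{2}\psi_j$, which yields \eqref{Eq: easypullback}.

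To pass to an arbitrary nondegenerate $\phi$, I would first check that $s_{\vec d}$ is compatible with the canonical identifications $\Jb_{g,n}(\phi_1)\cong\Jb_{g,n}(\phi_2)$ of Lemma~\ref{Lemma: Transitivity}. If $\mathcal{P}_1\ni\phi_1$ and $\mathcal{P}_2\ni\phi_2$ are adjacent along $H(i,S,d)$, then by Lemmas~\ref{Lemma: PhiDeterminedByTwoComponents} and \ref{Lemma: HyperplanesByTwoComponents} the parameters $\phi_1,\phi_2$ differ only in their $\Gamma(i,S)$-component, so $\mathcal{D}(\phi_2)-\mathcal{D}(\phi_1)=-\mathcal{C}^+_{i,S}$, i.e.\ $\mathcal{O}(\mathcal{D}(\phi_2))\cong\mathcal{O}(\mathcal{D}(\phi_1))\otimes\mathcal{O}(\mathcal{C}^-_{i,S})$ modulo $\pi^*\Pic$; by \eqref{Eqn: DescriptionOfActions} this twist is precisely $L(\mathcal{P}_1,\mathcal{P}_2)$, so the sections $s_{\vec d}$ into $\Jb_{g,n}(\phi_1)$ and $\Jb_{g,n}(\phi_2)$ correspond under the identification. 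Hence $s_{\vec d}^*\,\ThDivCl(\phi_2)-s_{\vec d}^*\,\ThDivCl(\phi_1)=s_{\vec d}^*\big(\ThDivCl(\phi_2)-\ThDivCl(\phi_1)\big)=(d-i)\,\delta_{i,S}$ by Theorem~\ref{Thm: wc}. Now fix $\phi$ and choose a path of polytopes from the one containing $\phi_{\vec d}$ to the one containing $\phi$; for each pair $(i,S)$ the walls of the form $H(i,S,d)$ crossed along the path are exactly those with $\min(d_S,m_{i,S})<d\le\max(d_S,m_{i,S})$, where $d_S$ is the $C^+$-bidegree of $\phi_{\vec d}$ and $m_{i,S}:=\lfloor\phi^+(i,S)+\tfrac12\rfloor$ is that of $\phi$ on a general element of $\Delta_{i,S}$, each such wall being crossed once and contributing $\pm(d-i)\delta_{i,S}$ with a sign according to whether the bidegree is increased or decreased. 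Summing and using $\sum_{e=a}^{b}e=\binom{b+1}{2}-\binom{a}{2}$, the net $(i,S)$-contribution is $\big(\binom{m_{i,S}-i+1}{2}-\binom{d_S-i+1}{2}\big)\delta_{i,S}$ whether $m_{i,S}\ge d_S$ or $m_{i,S}<d_S$; summing over $(i,S)$ and adding \eqref{Eq: easypullback} gives \eqref{Eq: pullbackthetadiv}.

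The step I expect to be the main obstacle is the compatibility of $s_{\vec d}$ with the identifications $L(\mathcal{P}_1,\mathcal{P}_2)$, since it is this that converts Theorem~\ref{Thm: wc} --- a statement about theta classes on a \emph{fixed} stack $\Jb_{g,n}$ --- into a statement about the pullbacks $s_{\vec d}^*$; it rests on the bidegree computation \eqref{Eqn: DescriptionOfActions} together with the fact that crossing a single wall changes only one two-vertex component of $\phi$. Secondary and essentially routine points are verifying that $\phi_{\vec d}$ is nondegenerate --- which holds because its value at every vertex of every loop-free tree graph is an integer, forcing a unique semistable multidegree that is realized only by line bundles --- and the sign bookkeeping in the telescoping when $m_{i,S}<d_S$. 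As an alternative to the wall-crossing argument one could prove \eqref{Eq: pullbackthetadiv} directly by Grothendieck--Riemann--Roch applied to $\mathcal{O}_{\mathcal{C}}(\mathcal{D}(\phi))$, but this requires the additional self- and cross-intersection numbers of the boundary-component divisors $\mathcal{C}^\pm_{i,S}$ and is considerably messier.
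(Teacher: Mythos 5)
Your proof is correct and follows the same overall strategy as the paper: reduce $s_{\vec d}^*\,\ThDivCl(\phi)$ to $-c_1(\det\mathbb{R}\pi_*\mathcal{O}(\mathcal{D}))$ via the seesaw/projection property, establish \eqref{Eq: easypullback} by Grothendieck--Riemann--Roch, and obtain \eqref{Eq: pullbackthetadiv} by telescoping the wall-crossing formula of Theorem~\ref{Thm: wc}. The one place you genuinely diverge is the GRR step: the paper identifies $\mathcal{C}_{g,n}$ with $\overline{\mathcal{M}}_{g,n+1}$ and works with the \emph{absolute} Todd class, which forces it through $K_{\overline{\mathcal{M}}_{g,n}}=13\lambda+\psi-2\delta$, Bini's formula for $c_2$ (which the authors must even correct), and a list of pushforward identities before the $13\lambda$, $\psi$ and $\delta$ terms cancel; your use of the \emph{relative} Todd class $\td(\mathcal{T}_\pi)$ collapses the degree-two contribution directly to $\lambda=\tfrac{1}{12}\pi_*\bigl(c_1(\omega_\pi)^2+[\mathrm{Sing}_\pi]\bigr)$ and is both shorter and less error-prone. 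You also make explicit two points the paper leaves implicit: the nondegeneracy of $\phi_{\vec d}$ (integrality of all partial sums keeps it off every hyperplane $d-\phi^+(\Gamma_0)+\tfrac12=0$), and --- more importantly --- the compatibility of the sections $s_{\vec d}$ with the identifications $L(\mathcal{P}_1,\mathcal{P}_2)$ via \eqref{Eqn: DescriptionOfActions}, without which Theorem~\ref{Thm: wc} would not transport to the pullbacks; the paper's proof consists of the single sentence that \eqref{Eq: pullbackthetadiv} ``follows by the wall-crossing formula,'' so your telescoping argument with $\sum_{d}(d-i)=\binom{m_{i,S}-i+1}{2}-\binom{d_S-i+1}{2}$ is exactly the bookkeeping the authors suppress.
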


\begin{proof}

Assuming \eqref{Eq: easypullback} holds, Formula~\eqref{Eq: pullbackthetadiv} follows by the wall-crossing Formula \eqref{wc}. Indeed, let $\mathcal{Q}$ be any stability polytope such that $\phi_{\text{can}} \in \overline{\mathcal{Q}}$. The first summand in the coefficient of $\delta_{i,S}$  in \eqref{Eq: pullbackthetadiv} corresponds to subsequently crossing stability hyperplanes to reach $\mathcal{Q}$ from $\calP(\phi)$, and the second summand corresponds to subsequently crossing stability hyperplanes to reach $\calP(\phi)$ from $\calP(\phi_{\vec{d}})$ for $\phi_{\vec{d}}$ defined in Equation~\eqref{phimuller}.

We now prove equality~\eqref{Eq: easypullback}. We define $\mathcal{D}$ to be the effective divisor  $\sum_{j=1}^n d_j p_j$ in $\Cb_{g,n}$. As we  observed earlier, the line bundle $\mathcal{O}(\mathcal{D})$ is fiberwise $\phi_{\vec{d}}$-stable. We have
\begin{align}
s_{\vec{d}}^*\  \ThDivCl(\phi_{\vec{d}}) &= - s_{\vec{d}}^* \ c_1 \mathbb{R} \pi_* (F_{\text{tau}}) =   -c_1 ( \mathbb{R} \pi_* \mathcal{O}(\mathcal{D})) \nonumber\\
                           &= - \left[ \ch \mathbb{R} \pi_* ( \mathcal{O}(\mathcal{D})) \cap \td\left(\mathcal{M}_{g,n}^{\text{TL}}\right) \right]_{\codim=1} \nonumber \\
                           &=- \pi_* \left[ \ch  \mathcal{O}(\mathcal{D}) \cap \td \left(\Cb_{g,n} \right) \right]_{\codim=2} \nonumber \\
 \label{threeterms}            &=  \pi_* \left[- \frac{\mathcal{D}^2}{2} + \mathcal{D} \cdot \frac{K_{\Cb_{g,n}}}{2}  - \td_2 ({\Cb}_{g,n})\right],
\end{align}
where we applied the theorem on cohomology and base change, the definition of theta divisor, the fact that $\ch_0 (\mathbb{R} \pi_* \mathcal{O}(\mathcal{D}))$ equals zero, and the Grothendieck--Riemann--Roch formula for stacks (see e.g.~\cite[Theorem 3.5]{edidinb}).

The first term in \eqref{threeterms} equals
\begin{equation} \label{todd1}
- \pi_*\left( \frac{\mathcal{D}^2}{2}\right) = \frac{1}{2} \sum_{j=1}^n d_j^2 \psi_j,
\end{equation}
because two different sections $p_j$ and $p_k$ are by definition disjoint, and by the very definition of the $\psi$-classes:
\[
\psi_j := - \pi_* ( p_j^2 ).
\]

To compute the second and third terms in \eqref{threeterms}, we identify the universal curve $\Cb_{g,n}$ with $\Mb_{g,n+1}$. The canonical class equals
\[
K=K_{\overline{\mathcal{M}}_{g,n+1}} = 13 \lambda  + \psi -2 \delta, \textrm{ where } \delta:= \delta_{irr} + \sum_{(i,S)} \delta_{i,S} \textrm{ and }  \psi:=\sum_{j=1}^{n+1} \psi_j.
\]
Using the pushforward formulas (that can be derived from \cite[Theorem~2.8]{logan})
\begin{align}
\pi_* ( p_j \cdot \lambda) &= \ \lambda,\nonumber \\ \pi_* (p_j \cdot \psi_k)&= \ \begin{cases} 0 & \textrm{ when } j=k, \\ \psi_k & \textrm{ when } j \neq k, \end{cases} \nonumber \\ \pi_*(\psi_j \cdot \delta_{irr}) &= \ \delta_{irr}, \nonumber \\ \pi_* ( p_j \cdot \delta_{i,S}) &= \ \begin{cases} \delta_{i,S} & \textrm{ when } \{p_j, p_{n+1}\} \subseteq S \textrm{ or } \{p_j, p_{n+1}\} \subseteq S^c, \\ 0 & \textrm{ otherwise,} \end{cases} \nonumber
\end{align}
(where $\delta_{0,\{j\}}$ is interpreted as $- \psi_j$ in the last formula), the second term in \eqref{threeterms} becomes
\begin{equation} \label{todd2}
\pi_*\left([\mathcal{D}] \cdot \frac{K_{\Cb_{g,n}}}{2} \right) = \frac{13}{2} (g-1) \cdot \lambda + \sum_{j=1}^n \frac{g-1+d_j}{2} \cdot  \psi_j -  (g-1)\cdot \delta.
\end{equation}

Finally, the third term equals
\begin{equation} \label{todd3}
- \pi_* (  \td_2  ( \Cb_{g,n})  ) = - \left( \frac{g-1}{2}\cdot ( 13 \lambda + \psi - 2 \delta) + \lambda \right).
\end{equation}
Indeed $\td_2=\frac{K^2+c_2}{12}$, and we read the formula for $c_2$ in \cite[page~765]{bini}. (Note that the formula for the second Chern class appears with an error in the coefficient of $\kappa_2$, which should be $-\frac{1}{2}$. This can be quickly checked by applying the Grothendieck--Riemann--Roch formula to the sheaf $\omega_{\pi}^{\otimes 2}(p_1 + \ldots + p_n)$ along the universal curve $\pi \colon \C_{g,n} \to \mathcal{M}_{g,n}$.) The pushforward \eqref{todd3} can then be computed with the aid of the pushforward formulas (that again can be derived from \cite[Theorem~2.8]{logan})
\begin{align}
\pi_*(K^2) = \pi_* ((\pi^*K + \omega_{\pi})^2) = & \ 2 \cdot \pi_* (\omega_{\pi})  \cdot K +  \pi_*( \omega_{\pi}^2) \nonumber \\
  = & \ 2 \cdot (2g-2) \cdot ( 13 \lambda + \psi - 2 \delta) + 12 \lambda - \delta, \nonumber \\
\pi_* (\kappa_2) =& \ 12 \lambda + \psi - \delta,\nonumber \\
\pi_* (\xi_{irr *} (\psi+\psi)) =&\ 2 \cdot \delta_{irr},\nonumber  \\
\pi_* (\xi_{i,S *} (1 \otimes \psi + \psi \otimes 1)) =&\ \delta_{i,S}, \nonumber \\
\pi_* (\xi_{0,\{j,n+1\} *} (1 \otimes \psi + \psi \otimes 1)) =&\ \psi_j. \nonumber
\end{align}
(Following the notation from \cite{bini}, here $\xi_{irr}$ and $\xi_{i,S}$ are the gluing maps, and $\kappa_2$ is the Arbarello--Cornalba kappa class).

Plugging the three terms \eqref{todd1} \eqref{todd2} and \eqref{todd3} in Equation~\eqref{threeterms},  we deduce \eqref{Eq: easypullback}.
\end{proof}

We now compare our results with certain pullbacks of similar theta divisors that have recently been studied by different authors.

\subsection{The class of Hain} \label{hgz}
Hain  studied a problem similar to the problem of computing $s_{\vec{d}}^{*}(\ThDivCl(\phi))$.  He constructed a theta divisor on the moduli space of multidegree $0$ line bundles on compact type curves.  His result is different from the results of this paper in two ways.  First, his construction is different. Hain's construction involves a choice of theta characteristic, uses the formalism of theta functions, and produces a $\mathbb{Q}$-divisor class on a moduli space of degree $0$ line bundles (rather than a moduli space of degree $g-1$ line bundles) \cite[Section~11.2, page~561]{hain13}.  Second, the pullback of the resulting divisor class differs from the pullbacks of the $\theta(\phi)$'s constructed in this paper.  Indeed, in \cite[Theorem~11.7]{hain13},  Hain computed the pullback of $\theta_{\alpha}$ by $s_{\vec{d}}$ as:
\begin{align} \label{Eqn: HClass}
	[\overline{D}_{\vec{d}}(\text{Ha})] 	=& - \lambda + \sum_{j=1}^n {{d_j+1}\choose{2}}\cdot  \psi_j - \sum_{(i,S)} {{d_S -i +1} \choose {2}}\cdot  \delta_{i,S} + \frac{\delta_{{irr}}}{8} \\
								=& [\overline{D}_{\vec{d}}(\phi_0)]+\frac{\delta_{{irr}}}{8},
\end{align}
and being a nonintegral Chow class, this never equals  $s_{\vec{d}}^{*}(\ThDivCl(\phi))$.

The results of this paper illuminate some of the structure of  \eqref{Eqn: HClass}.  The term $\lambda + \sum_{j=1}^n {{d_j+1}\choose{2}} \cdot \psi_j$ is $[\overline{D}_{\vec{d}}(\phi_{\vec{d}})]$, while the term $\sum {{d_S -i +1} \choose {2}} \cdot \delta_{i,S}$ is a wall-crossing term, the difference between $[\overline{D}_{\vec{d}}(\phi_{\vec{d}})]$ and $[\overline{D}_{\vec{d}}(\phi_0)]$ described by Theorem~\ref{Thm: wc}.

Finally, a caution to the reader.  Grushevsky--Zakharov gave an alternative proof of~\eqref{Eqn: HClass} in  \cite[Theorem~2, Equation~(3.4)]{grushevsky14a}), their definition of the theta divisor in  \cite{grushevsky14a} is different from the definition in \cite{hain13}.  Over the locus of compact type curves, the theta divisor is defined on \cite[page~4053, second paragraph]{grushevsky14a} to be the image of an Abel (or sum) map out of the family of symmetric powers over the moduli space of compact type curves.  It is significant that this is taken as the definition over the locus of compact type curves and not over all of $\Mb_{g,n}$.  As a map defined over  $\Mb_{g, n}$, the Abel map has indeterminacy, but one can still define its image as the projection  of the Zariski closure of the graph.  This construction produces an integral Chow class, and as such, it cannot equal Hain's class $[\overline{D}_{\vec{d}}(\text{Ha})]$ (which is nonintegral, as it evidently appears in Formula~\eqref{Eqn: HClass}).

\subsection{The stable pairs class} \label{Subsect: StablePair}
In the introduction we mentioned the divisor $[\overline{D}_{\vec{d}}(\text{SP})]$ that is the pullback of the theta divisor of the unique family of stable semiabelic (or quasiabelian) pairs extending the principally polarized universal Jacobian.  Here we describe this extension in greater detail.

Recall that a stable semiabelic pair is a pair $(\overline{P}, D)$ consisting of a (possibly reducible) seminormal projective variety $\overline{P}$ with a suitable action of a semiabelian variety $G$ together with an ample effective divisor $D \subset \overline{P}$ that does not contain a $G$-orbit \cite[Definition~1.1.9]{alexeevb}.  Stable semiabelic pairs satisfy a stable reduction theorem \cite[Theorem~5.7.1]{alexeevb} that implies there is, up to isomorphism of pairs, at most one extension of the family of principally polarized Jacobians $(\J_{g,n}/\Mm{g}{n}, \Theta)$ to a family of stable semiabelic pairs $(\Jb_{g,n}/\Mm{g}{n}^{\text{TL}}, \ThDiv)$.

For $n=0$ (a case not studied here), Alexeev has proven that this unique extension  exists and is realized by the Caporaso--Pandharipande family, the family of compactified Jacobians associated to the degenerate parameter $\phi_{\text{can}}$ \cite[Theorem~5.1, Theorem~5.3, Corollary~5.4]{alexeev04}.  For $n>0$, the unique extension $(\overline{\J}_{g,n}/\Mb_{g,n}, \ThDiv)$ of $\J_{g,n}$ is the pullback of $(\Jb_{g,0}, \ThDiv_{g,0})$ by the forgetful morphism $\Mb_{g,n} \to \Mb_g$.

An alternative description of this extension is provided by the following lemma:
\begin{lemma} \label{Lemma: FamilyIsSemiabelic}
	If $\phi_0 \in V_{g,n}^{\text{TL}}$ is nondegenerate and such that $\phi_{\text{can}} \in \overline{\calP}(\phi_0)$, then the restriction of the pair  $(\Jb_{g,n}(\phi)/\Mm{g}{n}^{\text{TL}}, \ThDiv(\phi))$ to the open substack $\Mb_{g,n}^{\leq 1} \subseteq \Mm{g}{n}^{\text{TL}}$ of stable curves with at most one node is a stable semiabelic pair.
\end{lemma}
\begin{proof}
	For convenience, call $\mathcal{U}:= \Mb_{g,n}^{\leq 1}$. The main point to prove is that a fiber of $\ThDiv(\phi_0)|\mathcal{U} \to \mathcal{U}$ is ample and does not contain an orbit of the action of the multidegree $0$ Jacobian, and we prove this by directly computing the theta divisor, which has a particularly simple structure.  To begin, observe that both $\Jb_{g,n}(\phi_0)|\mathcal{U} \to \mathcal{U}$ and $\ThDiv(\phi_0)|\mathcal{U} \to \mathcal{U}$ are flat by Corollary~\ref{Cor: JbExists} and Lemma~\ref{Lemma: WhenIsPhiFlat?}, so it is enough to fix a pointed curve $(C, p_1, \dots, p_n) \in \mathcal{U}$  and prove that the fiber  $\Jb_{C}$ and the effective divisor $\ThDiv_{C}$ form a stable semiabelic variety.
	
	Alexeev has proved quite generally that the compactified Jacobian of a nodal curve is a stable semiabelic variety \cite[Theorem~5.1]{alexeev04}, so to prove the specific pair   $(\Jb_{C}, \ThDiv_{C})$ is a stable pair, we need to prove that $\ThDiv_{C}$ is ample and does not contain an orbit of the action of the moduli space $\J^0_{C}$ of multidegree $0$ line bundles.  There are two cases to consider: when $C$ is irreducible and when $C$ is reducible.
	
	When $C$ is irreducible, $\ThDiv_{C}$ is ample by \cite[Corollary~14]{soucaris} and does not contain a group orbit by the proof of  \cite[Proposition~7]{soucaris}.  When $C$ is reducible, $C$ must have two irreducible components, $C^{+}$ and $C^{-}$, and the computation from Example~\ref{Example: TwoComponentExample} shows that the $\phi_0$-stable sheaves are either the line bundles of bidegree $(g^{+}-1, g^{-})$ or the line bundles of bidegree $(g^{+}, g^{-}-1)$.  In the first case, restricting to components defines an isomorphism $\Jb_{C}(\phi_0) \cong \J_{C^{+}}^{g^{+}-1} \times \J_{C^{-}}^{g^{-}}$ that identifies $\ThDiv(\phi_0)$ with $p_{2}^{*}(\text{node} + \ThDiv_{C^{+}})+p_2^{*}(\ThDiv_{C^{-}})$.  (Here $p_1, p_2$ are the projection morphisms).  This identifies $(\Jb_{C}(\phi_0), \ThDiv_{C})$ as the product of principally polarized varieties, and such a  product satisfies the desired conditions.  The case of bidegree $(g^{+}, g^{-}-1)$ is entirely analogous, with the roles of $C^{+}$ and $C^{-}$ being switched.
\end{proof}

\begin{remark}
	Observe that Lemma~\ref{Lemma: FamilyIsSemiabelic} implies that the unique extension of $(\J_{g,n}, \ThDiv)$ to a family of stable pairs over  $\Mb_{g,n}^{\leq 1}$ admits multiple descriptions as a moduli space.  The authors expect that this remains true over $\Mm{g}{n}^{\text{TL}}$ but, as our goal is to establish Equation~\eqref{Eqn: PairOne} from the Introduction, we do not pursue this issue here.
\end{remark}

An immediate consequence is that Equation~\eqref{Eqn: PairOne} holds.
\begin{corollary} \label{Cor: StablePair}	
	We have the following equality of Chow classes:
\begin{align}
		[\overline{D}_{\vec{d}}(\text{SP})] 	=& [\overline{D}_{\vec{d}}(\phi)] \quad \text{for any nondegenerate $\phi$ such that $\phi_{\text{can}} \in \overline{\mathcal{P}}(\phi)$}   \\
									=& - \lambda + \sum_{j=1}^n {{d_j+1}\choose{2}}\cdot  \psi_j - \sum_{(i,S)} {{d_S -i +1} \choose {2}}\cdot  \delta_{i,S}. \label{Eqn: StablePairTwo}
	\end{align}
\end{corollary}
\begin{proof}
	By Lemma~\ref{Lemma: FamilyIsSemiabelic} we have $[\overline{D}_{\vec{d}}(\text{SP})] = [\overline{D}_{\vec{d}}(\phi_0)]$ for any $\phi_0$ such that $\phi_{\text{can}} \in \overline{\calP}(\phi_0)$.  The other equality follows from Theorem~\ref{pullback}.
\end{proof}

An important case of Corollary~\ref{Cor: StablePair} was proven by Dudin \cite{dudin}.  In \cite[Section~4.3]{dudin}, Dudin computes the pullback of the theta divisor on one of the moduli spaces of quasi-stable sheaves studied by Melo in \cite{melo16}.  In the notation of this paper, the moduli space Dudin studies is  $\Jb_{g, n}(\phi_{\epsilon})$ for
\begin{gather*}
	\phi_{\epsilon} := \phi_{\text{can}} + \epsilon \cdot \chi \text{ for $0 < \epsilon << 1$, with} \\
	\chi(\Gamma)(v) := \begin{cases} \#\operatorname{Vert}(\Gamma)-1 & \text{if $p(1) = v$;} \\ -1 & \text{otherwise.} \end{cases}
\end{gather*}
(To see that $\phi_{\epsilon}$-stability coincides with the quasi-stability condition studied by Dudin, observe that $\phi_{\epsilon}$ is defined so that the semistability inequality \eqref{Eqn: DefOfStability} holds for $\phi=\phi_{\text{can}}$ and the inequality is strict when the subgraph $\Gamma_0$ contains the first marking $1$.)

Using techniques similar to the ones used in this paper, Dudin proves that the pullback of $\ThDivCl(\phi_{\epsilon})$ under the section $s_{\vec{d}}$ is the class in Equation~\eqref{Eqn: StablePairTwo}.

\subsection{The class of M\"uller} \label{Subsect: Mueller}
M\"{u}ller studied a different extension of $[D_{\vec{d}}]$ in \cite{mueller13}.  Assuming that at least one $d_j$ is negative, M\"uller defined $\overline{D}_{\vec{d}}(\text{M\"{u}}) \subset \Mb_{g,n}$ to be the Zariski closure in $\Mb_{g,n}$ of \[D_{\vec{d}}:= \{(C, p_1, \ldots, p_n) \in \Mm{g}{n} \colon \ h^0(C, \mathcal{O}_C(d_1 p_1+ \ldots+ d_n p_n) \}\subset \Mm{g}{n}. \]

M\"uller implicitly works with a convention for the indices of the boundary divisors $\Delta_{i,S}$ that is different from the one we have fixed in Definition~\ref{Definition: AdmissiblePair}. This convention depends on $\vec{d}$. We let $S^{+}:=\{j \in \{1,\ldots ,n\} \colon d_{j}>0\}$, and we construct a set of indices $\mathcal{I}:=\{(i,S)\}$ by first including all stable pairs $(i,S)$ such that $0 \leq i \leq g$ and $S \subseteq S^{+}$ and then completing it to a full set of representatives for the equivalence relation $(i,S) \sim (g-i, S^{c})$ on the subset of $\{0, \ldots , g \} \times \{S \subseteq [n]\}$ of stable pairs (a pair $(i,S)$ is unstable if $i=0$ and $|S| < 2$ or if $i=g$ and $|S^c| < 2$, and it is stable otherwise).

M\"uller computed in \cite[Theorem~5.6]{mueller13} the class of the Zariski closure:
\begin{equation} \label{Eqn: MuellerDivisor}
	[\overline{D}_{\vec{d}}(\text{M\"{u}})] = - \lambda + \sum_{j=1}^n {{d_j+1}\choose{2}} \cdot  \psi_j - \sum_{\substack{(i,S) \in \mathcal{I} \\ S \subseteq S^+}}  {{ |d_{S}-i|+1 }\choose{2}} \cdot \delta_{i,S} - \sum_{\substack{(i,S) \in \mathcal{I} \\ S \not\subseteq S^+}} {{ d_{S}- i+1}\choose{2}} \cdot \delta_{i,S}
\end{equation} and Grushevsky--Zakharov gave  in \cite[Theorem~2]{grushevsky14a} an alternative proof of this formula. (Note that the set of indices $\mathcal{I}$ depends on a choice, but Formula~\eqref{Eqn: MuellerDivisor} is independent of that choice).

Comparing \eqref{Eq: pullbackthetadiv} with \eqref{Eqn: MuellerDivisor}, we see that if $\phi_0$ is such that $\phi_{\text{can}} \in \overline{\calP}(\phi_0)$, then
\begin{equation} \label{differencemuller}
	[\overline{D}_{\vec{d}}(\phi_0)] = [\overline{D}_{\vec{d}}(\text{M\"{u}})] + \sum_{(i,S) \in T_{\vec{d}}} (i-d_{S}) \cdot \delta_{i,S},
\end{equation}
where $T_{\vec{d}}$ is defined by
\[T_{\vec{d}}:=\{(i,S) \in \mathcal{I} \colon  \ d_j>0 \textrm{ for all }j \in S, \textrm{ and }d_S <i\}.\]
(Observe that $T_{\vec{d}}$ is the set of indices $(i,S)$ such that $s_{\vec{d}}(\Delta_{i,S})$ is completely contained in $\ThDiv(\phi_0) \subset \Jb_{g,n}(\phi_0)$.)

Inspecting Equation~\eqref{differencemuller}, we see that the divisor classes $[\overline{D}_{\vec{d}}(\phi_0)]$ and $[\overline{D}_{\vec{d}}(\text{M\"{u}})]$ are equal if and only if $T_{\vec{d}}=\emptyset$.  Thus from Lemma \ref{Lemma: inclusiondivisors} we deduce the following.

\begin{corollary} \label{cor: mullerequalstheta} Let $\phi_0$ be any nondegenerate element of $V_{g,n}^{\text{TL}}$ such that $\phi_{\text{can}} \in \overline{\calP}(\phi_0)$. The inclusion of the closed substack $\overline{D}_{\vec{d}}(\text{M\"{u}})$ in $\overline{D}_{\vec{d}}(\phi)$ is an isomorphism if and only if $\phi= \phi_0$ and $T_{\vec{d}}= \emptyset$.
\end{corollary}

\section{Acknowledgments}
Jesse Leo Kass was partially sponsored by the Simons Foundation under Grant Number \#429929  and by the National Security Agency under Grant Number H98230-15-1-0264. The United States Government is authorized to reproduce and distribute reprints notwithstanding any copyright notation herein. This manuscript is submitted for publication with the understanding that the United States Government is authorized to reproduce and distribute reprints. Nicola Pagani was supported by the EPSRC First Grant EP/P004881/1 with title  ``Wall-crossing on universal compactified Jacobians''.

The authors would like to thank the referee for suggestions that significantly improved the exposition.

This project was first conceived when the two authors were working for the Institut f\"{u}r Algebraische Geometrie of the Leibniz Universit\"{a}t Hannover, and the authors have benefitted from the hospitality of the IAG during the summer of 2014. The authors are particularly thankful to Klaus Hulek for his academic guidance and for his financial support.

The authors are grateful to Orsola Tommasi for discussions that led them to consider translations of the theta divisor and to Barbara Fantechi and Nicola Tarasca for useful discussions related to the development of this paper.

\bibliography{wallcrossing}
\end{document}